\numberwithin{equation}{section}
\newtheorem{defi}{Definition}[section]
\newtheorem{thm}[defi]{Theorem}
\newtheorem{lemm}[defi]{Lemma}
\newtheorem{remark}[defi]{Remark}
\newtheorem{cor}[defi]{Corollary}
\newtheorem{assum}[defi]{Assumption}
\newtheorem{prop}[defi]{Proposition}
\newcommand{\dive}{\operatorname{div}}
\newcommand{\X}{\mathcal{X}}
\DeclareMathOperator{\sgn}{sgn}
\DeclareMathOperator{\essup}{essup}
\begin{document}

		\title{Differentiability of quadratic forward-backward SDEs with rough drift}
		
		\author{Peter Imkeller}
		\address{Humboldt-Universit\"{a}t zu Berlin, Institut f\"{u}r Mathematik, Unter den Linden 6, D-10099
			Berlin, Germany}
		\email{imkeller@mathematik.hu-berlin.de}
		\thanks{P. Imkeller was supported in part by DFG Research Unit FOR 2402.}
		\author{Rhoss Likibi Pellat}
		\address{Department of Mathematics, University of Ghana, Accra, Ghana}
		\address{African Institute for Mathematical Sciences, Ghana}
		\email{rhoss@aims.edu.gh}
		\thanks{R. Likibi Pellat is funded by DAAD under the programme PhD-study at AIMS. This work was initiated when R. L. P  visited the Humboldt University of Berlin. He is grateful for the hospitality. He also thanks Prof Stefan Geiss for helpful comments and suggestions}
		\author{Olivier Menoukeu Pamen}
		\address{African Institute for Mathematical Sciences, Ghana}
		\address{Institute for Financial and Actuarial Mathematics, Department of Mathematical Sciences, University of Liverpool, L69 7ZL, United Kingdom}
		\email{menoukeu@liverpool.ac.uk}
		\thanks{O. Menoukeu Pamen acknowledges the funding provided by the Alexander von Humboldt Foundation, under the program financed by the German Federal Ministry of Education and Research entitled German Research Chair No 01DG15010.}
		
		
		\date{\today}
		
		\keywords{Quadratic BSDEs; BMO martingale; Malliavin calculus, stochastic flows.}

		\subjclass[2010]{Primary: 60H10, 35K59, Secondary: 35K10, 60H07, 60H30} 
		\maketitle

		\section*{Abstract}

		In this paper, we consider quadratic forward-backward SDEs (QFBSDEs), for {which} the drift in the forward equation does not satisfy the standard globally Lipschitz condition and the driver of the backward system {possesses} nonlinearity of type $f(|y|)|z|^2,$ where $f$ is any locally integrable function. We prove both the Malliavin and classical derivative of the QFBSDE and provide representations of these processes. We study a numerical approximation of this system in the sense of \cite{ImkDosReis} in which the authors assume that the drift is Lipschitz and the driver of the BSDE is quadratic in the traditional sense (i.e., $f$ is a positive constant). We show that the rate of convergence is the same as in \cite{ImkDosReis}.

		\section{Introduction}
		In this paper, we address the problem of the Malliavin and the classical differentiability of a class of quadratic forward-backward SDEs (FBSDEs) with rough drift. FBSDEs have attracted a lot of interest in the last four decades due to their applications to optimal control, financial/insurance mathematics and the theory of PDE via the non-linear Feynman-Kac formula. Of particular interest is the class of BSDEs whose drivers grow quadratically in the control variable $Z$. Such a BSDE appears for example in exponential utility maximisation or in the Epstein-Zin utility maximisation problems. To the best of our knowledge, the first result on existence and uniqueness of BSDEs with quadratic drivers and bounded terminal value is due to Kobylanski (\cite{Kobylansky}). This result was extended to the case of unbounded terminal value in \cite{BriandHu, BriandHu08} and in other different ways in \cite{HuImkMuller, Morlais09,DelbaenHuRichou,BarrieuElkaroui}.

		Recently the authors in \cite{Ouknine} studied a new class of unbounded quadratic BSDE {for which} the generator $g$ has the following growth condition $|g(t,y,z)| \leq C(1 + |y| + f(y)|z|^2)$, where $f$ is an integrable function. Their approach is based on exponential-type transformation and an It\^o-Krylov formula for BSDEs. This result was extended in \cite{Bahlali1} to the case of locally integrable function $f$ by using the so called {\it domination method}.

		Another question of importance that arises in the study of BSDEs is the characterisation of the control process $Z$. {If} the coefficients of the FBSDEs are smooth enough, $Z$ is given as the "derivative", either in the sense of the non linear Feynman-Kac formula or in the Malliavin sense via the Clark-Ocone-Hausmann formula. Moreover, $(Z_t)_{t\in [0,T]}$ has a continuous version given by the Malliavin derivative of the backward equation $(D_t Y_t)_{t\in [0,T]}$. The later turns out to be a crucial concept when one deals with novel discretisations of BSDEs implemented with deep learning regressions (see for instance \cite{NAO21}). When the parameters are not smooth, the existence of Malliavin and classical differentiable solutions to FBSDEs remain a challenging question. 
		
		Smoothness of  solutions of FBSDEs, in both the classical and Malliavin sense when the coefficients are smooth enough were established in \cite{Pardoux-Peng92, KPQ97}. In the quadratic case, the first results were derived by the authors in \cite{ArImDR} assuming that the driver is of the form $g(t,x,y,z)= \ell(t,x,y,z) + \alpha|z|^2 $ with $\ell \in C^1$ and Lipschitz continuous in $(x,y,z)$. This work was then extended to non-linear quadratic generators in \cite{ImkDos}. Their method for establishing the classical differentiability relies on differences of difference quotients together with the completeness of the vector space. In order to prove the Malliavin differentiability, they proposed an approximation procedure via a family of truncated BSDEs and a compactness criterion argument. Using a technique based on Kunita's method (solving an abstract BSDE with stochastic Lipschitz conditions), the authors in \cite{BriandConfortola} proved a differentiability result in the classical sense for solutions to quadratic FBSDEs.  Other related results include the work \cite{ImkRevRich} for differentiability of FBSDE driven by continuous martingales with quadratic growth and the work \cite{FreiDosReis} for classical differentiability of FBSDEs with polynomial growth (see also \cite{MPR17}). 
    
    	The results obtained in the aforementioned papers  do not cover the case of non uniformly Lipschitz drift coefficients. 
In \cite{RhOlivOuk}, the authors established a result on Malliavin derivative solutions to coupled FBSDEs with discontinuous coefficients. Their method exploits the regularization effect of the Brownian motion when the diffusion coefficient is a constant and the regularity of the {\it weak decoupling field } combined with a compactness criterion argument. However, the authors did not provide a representation satisfied by the Malliavin derivatives in terms of BSDEs. This is mainly due to the very mild assumptions that were considered there.



Another motivation of this paper concerns the numerical approximation of the solution to quadratic FBSDEs with rough drift. More precisely, we consider the following Markovian-type BSDE 
\begin{align}\label{illu}
\begin{cases}
	X_t = x + \displaystyle \int_{0}^{t} b(s,X_s)\mathrm{d}s + \int_{0}^{t} \sigma(s,X_s)\mathrm{d} B_s,\\
	Y_t =  \phi(X_T) + \displaystyle \int_{t}^{T} g(s,X_s,Y_s,Z_s)\mathrm{d}s - \int_{t}^{T} Z_s\mathrm{d}B_s.
\end{cases}
\end{align}
The explicit solutions to equation \eqref{illu} are generally unknown, thus, developing numerical methods to approximate these solutions becomes a good  alternative. Assuming that the coefficients in \eqref{illu} are Lipschitz continuous, such problem was studied in \cite{Chev97} (see also \cite{BouTou04, Zhang041}). To the best of our knowledge, the case with quadratic drivers was first investigated in \cite{ImkDos}. Their method consists in approximating solution of \eqref{illu} by the solution $(Y^n,Z^n)$ of a family of truncated BSDEs given by:
\begin{align}\label{illu1}
Y_t^n =  \phi(X_T) + \displaystyle \int_{t}^{T} g(s,X_s,Y_s^n,\rho_n(Z_s^n))\mathrm{d}s - \int_{t}^{T} Z_s^n\mathrm{d}B_s,\quad n \in \mathbb{N},
\end{align}
where $\rho_n$ is a smooth real valued function given by \eqref{trunc}. The truncation errors is studied in the natural norms of the solutions to \eqref{illu} and \eqref{illu1}. Its proof uses the facts $Z^n_t$ has a continuous version  given by  $D_tY_t^n = (\nabla_x Y_t^n)(\nabla_x X_t)^{-1}$ such that $\sup_{n \in \mathbb{N}} \| Z^n\|_{\mathcal{S}^p} < \infty$.
 
By assuming that the drift is in addition H\"older continuous in $t$ and the diffusion is a differentiable function of time, the author in \cite{Richou} improved the results obtained in \cite{ImkDos} by considering a different scheme. More precisely, the author approximates BSDEs with quadratic generators by BSDEs with Lipschitz generators. The above result was generalised in \cite{ChasRich} for time homogeneous drift and diffusion coefficient satisfying a global Lipchitz condition. 

In this paper, we assume that the driver $g$ is stochastic Lipschitz in $y$ and quadratic of the form $f(|y|)|z|^2$, where $f$ is any increasing and locally integrable function and the drift is either measurable and bounded or bounded and H\"{o}lder continuous. We then study both differentiablity in the Malliavin and classical sense of the solution to the quadratic FBSDE. We follow the method developed in \cite{ArImDR, ImkDos} and work under much weaker conditions. We also provide the representations of both the Malliavin and classical derivatives of the solution to the FBSDE. One of the main obstacles in this work is the non smoothness of the drift. 
In the case of $\beta$-H\"older continuous drift coefficients, using the fact that the first variation process is $\beta$-H\"older continuous in $x \in \mathbb{R}^d$ (see \cite{FlanGubiPrio10}), we derive the representation of classical derivative of the BSDE.

In this work we also provide a truncation error of the numerical approximation. Since the driver $g$ is only stochastic Lipschitz in $Y$ we also have to truncate the driver in $Y$. Another difficulty is to find the bound of the supremum norm of $Z^n$, which depends on the supremum norm of the inverse of the first variation process of the forward equation. We tackle this problem by using once more the regularity of the solution to the Kolmogorov equation associated to the forward equation. This allows us to obtain a transformed SDE with bounded and smooth coefficients from which the sought bound can be obtained. We then achieve the same rate of convergence as in \cite{DosReis} (see Theorem \ref{thmconve1}). As an independent result, we also obtain a Zhang's path regularity theorem in our general framework. 



The remaining part of the paper is organised as follows: in Section \ref{Genset}, we provide basic definitions and results on BSDEs. The existence and uniqueness results as well as the main a priori estimates are given in Section \ref{sectexun}. In Section \ref{sectdiff} we study the differentiability for an abstract BSDEs with stochastic Lipschitz generators Section \ref{Appli} is devoted to the smothness of the solution of the FBSDE whereas Section \ref{NA} is concerned with rate of convergence of the numerical approximation of the solution to the BSDE. 
\section{General settings and Notations}\label{Genset}
\subsection{Some notation}
Throughout this paper a stochastic basis $(\Omega,\mathfrak{F},\{\mathfrak{F}_t\}_{t\geqslant 0},\mathbb{P}, \{B_t\}_{t\geq 0})$ is given. Here $\{\mathfrak{F}_t\}_{t\geqslant 0}$ is the standard filtration generated by the $d\text{-dimensional}$ Brownian motion $\{B_t\}_{t\geq 0}$ augmented by all $\mathbb{P}\text{-null}$ sets of $\mathfrak{F}.$ 
For fixed $T> 0,d \in \mathbb{N},$ $p\in [2,\infty)$, we denote by:
\begin{itemize}
\item $L^p(\mathbb{R}^d)$ the space of $\mathfrak{F}_T\text{-adapted}$ random variables  $X$ such that $\Vert X \Vert^p_{\tiny L^p }:= \mathbb{E}|X|^p < \infty$; 
\item $L^{\infty}(\mathbb{R}^d)$ the space of bounded random variables with norm $\Vert X \Vert_{\tiny L^{\infty} }:= \essup_{\omega\in \Omega}|X(\omega)|$; 
\item $\mathcal{S}^p(\mathbb{R}^d)$ the space of all adapted continuous $\mathbb{R}^d$-valued processes $X$ such that $\Vert X \Vert^p_{\tiny \mathcal{S}^p(\mathbb{R}^d) }:= \mathbb{E}\sup_{t\in [0,T]} |X_t|^p < \infty$; 
\item $\mathcal{H}^p(\mathbb{R}^d)$ the space of all predictable $\mathbb{R}^d$-valued processes $Z$  such that $\Vert Z \Vert^p_{\tiny \mathcal{H}^p(\mathbb{R}^d) }:= \mathbb{E}(\int_0^T |Z_s|^2\mathrm{d}s)^{p/2}  < \infty$; 
\item $\mathcal{S}^{\infty}(\mathbb{R}^d)$ the space of continuous $\{ \mathfrak{F}_s \}_{0\leq t\leq T} $-adapted processes $Y:\Omega\times[0,T]\rightarrow \mathbb{R}^d$ such that $\Vert Y \Vert_{\tiny \infty }:= \essup_{\omega\in \Omega} \sup_{t\in [0,T]} |Y_t(\omega)| < \infty;$
\item BMO($\mathbb{P}$) the space of square integrable martingales $M$ with $M_0 = 0$ such that $ \Vert M\Vert_{\text{BMO}(\mathbb{P})} = \sup_{\tau \in [0,T]} \Vert \mathbb{E} [\langle M \rangle_T  - \langle M \rangle_{\tau}]/\mathfrak{F}_{\tau} \Vert_{\infty}^{1/2} < \infty,$ the supremum is taken over all stopping times $\tau \in [0,T];$
\item $\mathcal{H}_{\text{BMO}}$ the space of $\mathbb{R}^d\text{- valued}$ $\mathcal{H}^p\text{-integrable} $ processes $(Z_t)_{t\in [0,T]}$ for all $p\geq 2$ such that $Z*B = \int_0 Z_s\mathrm{d}B_s \in \text{BMO}(\mathbb{P}).$ We define $\Vert Z \Vert_{\mathcal{H}_{\text{BMO}}}:= \Vert \int Z\mathrm{d}B \Vert_{\text{BMO}(\mathbb{P})}$;
\item $L^{\infty}([0,T];C_b^{\beta}(\mathbb{R}^d;\mathbb{R}^d))$ the space of all vector fields $b:[0,T]\times \mathbb{R}^d\rightarrow \mathbb{R}^d$ having all components in $L^{\infty}([0,T];C_b^{\beta}(\mathbb{R}^d))$ and $L^{\infty}([0,T];C_b^{\beta}(\mathbb{R}^d))$ stands for the set of all bounded Borel functions $b:[0,T]\times \mathbb{R}^d\rightarrow \mathbb{R}$ such that 
\[ [b]_{\beta,T} = \sup_{t \in [0,T]} \sup_{x\neq y\in \mathbb{R}^d} \frac{|b(t,x) -b(t,y)|}{|x-y|^\beta} < \infty . \]
\end{itemize}

Below, we briefly introduce the spaces of Malliavin differentiable random variables $\mathbb{D}^{k,p}.$  For more information on Malliavin calculus we refer the reader to \cite{DOP08, Nua06}.
Let $\mathcal{S}$ be the space of random variables $\xi$ of the form 
\[ \xi = F\Big( (\int_{0}^{T} h_s^{1,i}\mathrm{d}W_s^1)_{1\leq i\leq n},\cdots, (\int_{0}^{T} h_s^{d,i}\mathrm{d}W_s^d)_{1\leq i\leq n}   \Big),  \]
where $F\in C_b^{\infty}(\mathbb{R}^{n\times d}), h^1,\ldots,h^n \in L^2([0,T];\mathbb{R}^d)$ and $n\in \mathbb{N}.$ For simplicity, we assume that all $h^j$ are written as row vectors. For $\xi \in \mathcal{S},$ we define $D=(D^1,\cdots,D^d):\mathcal{S}\rightarrow L^2(\Omega\times [0,T])^d$ by 
\[ 
D^i_{\theta}\xi := \sum_{j=1}^{n}\frac{\partial F}{\partial x_{i,j}} \Big( \int_{0}^{T} h_t^{1}\mathrm{d}W_t,\cdots,\int_{0}^{T} h_t^{n}\mathrm{d}W_t   \Big)h_{\theta}^{i,j}, 0\leq \theta \leq T, 1\leq i \leq d, 
\]
and for $k\in \mathbb{N}$ and $\theta=(\theta_1,\cdots,\theta_k)\in [0,T]^k$ its $k\text{-fold}$ iteration as 
\[ D^{(k)} = D^{i_1}\cdots  D^{i_k}_{1\leq i_1,\cdots,i_k\leq d}.  \]
For $k\in \mathbb{N}$ and $p\geq 1,$ let $\mathbb{D}^{k,p}$ be the closure of $\mathcal{S}$ with respect to the norm 
\[ \| \xi \|^p_{k,p} = \| \xi \|^p_{L^p} + \sum_{i=1}^{k} \| |D^{(i)}\xi|\|^p_{(\mathcal{H}^p)^i}  .\]
The operator $D^{(k)}$ is a closed linear operator on the space $\mathbb{D}^{k,p}.$ Observe that if $\xi \in \mathbb{D}^{1,2} $ is $\mathfrak{F}_t\text{-measurable}$ then $D_{\theta}\xi = 0$ for $\theta\in (t,T].$ Further denote $\mathbb{D}^{k,\infty} =\cap_{p>1}\mathbb{D}^{k,p}.$

For $k\in \mathbb{N}, p\geq 1,$ denote by $\mathbb{L}_{k,p}(\mathbb{R}^m)$ the set of $\mathbb{R}^m\text{-valued}$ progressively measurable processes $u=(u^1,\cdots,u^m)$ on $[0,T]\times \Omega$ such that 
\begin{itemize}
\item[(i)] For Lebesgue a.a. $t \in [0,T], u(t,\cdot)\in (\mathbb{D}^{k,p})^m; $
\item[(ii)] $(t,\omega)\rightarrow D_{\theta}^k u(t,\omega) \in (L^2([0,T]^{1+k}))^{d\times m}$ admits a progressively measurable version;
\item[(iii)] $\|u \|_{k,p}^p = \| |u| \|^p_{\mathcal{H}^p} + \sum_{i=1}^{k} \| |D^{(i)}u|\|^p_{(\mathcal{H}^p)^{1+i}} < \infty.$
\end{itemize}
For example if a process $\zeta\in \mathbb{L}_{2,2}(\mathbb{R})$, we have 
\begin{align*}
\| \zeta \|_{\mathbb{L}_{1,2}}^2 &= \mathbb{E}\Big[ \int_{0}^{T}|\zeta_t|^2\mathrm{d}t + \int_{0}^{T}\int_{0}^{T}|D_{\theta}\zeta_t|^2\mathrm{d}\theta\mathrm{d}t  \Big],\\
\| \zeta \|_{\mathbb{L}_{2,2}}^2 &= \| \zeta \|_{\mathbb{L}_{1,2}}^2 + \mathbb{E}\Big[ \int_{0}^{T}\int_{0}^{T}\int_{0}^{T}|D_{\theta_1}D_{\theta_2}\zeta_t|^2\mathrm{d}\theta_1\mathrm{d}\theta_2\mathrm{d}t  \Big].
\end{align*}

\subsection{Some preliminary results}
Consider the BSDE 
\begin{equation}\label{eqmain1}
Y_t = \xi + \int_{t}^{T} g(s,\omega,Y_s,Z_s)\mathrm{d}s -\int_{t}^{T} Z_s\mathrm{d}B_s,
\end{equation}
and recall the following result on the Malliavin differentiablity of solution to the BSDE \eqref{eqmain1} in the Lipschitz framework (see for example \cite{KPQ97}).

\begin{thm}[Malliavin differentiability]\label{Maldiff}
Suppose $\xi \in \mathbb{D}^{1,2}$ and $g: \Omega\times [0,T]\times \mathbb{R}\times \mathbb{R}^d\rightarrow \mathbb{R}$ is continuously differentiable in $(y,z),$ with uniformly bounded derivatives. Suppose for each $(y,z)\in \mathbb{R}\times\mathbb{R}^d,$ the process $g(\cdot,y,z)$ belongs to $\mathbb{L}_{1,2}(\mathbb{R}^d)$ with Malliavin derivative denoted by $D_{\theta}g(t,y,z).$ Let $(Y,Z)$ be the solution of the associated BSDE \eqref{eqmain1} and suppose in addition
\begin{itemize}
	\item[(i)] $(g(t,0,0))_{t\in [0,T]} \in \mathcal{H}^4(\mathbb{R})$ and $\xi \in L^4(\mathbb{R}),$
	\item[(ii)] $\int_{0}^{T}\mathbb{E}[|D_{\theta}\xi|^2]\mathrm{d}\theta <\infty, \int_{0}^{T}\|(D_{\theta}g)(t,Y,Z)\|_{\mathcal{H}^2}^2\mathrm{d}\theta <\infty \text{ a.s.}$ and for $\text{a.a. } \theta\in [0,T]$ and any $t\in [0,T], (y^1,z^1),(y^2,z^2)\in \mathbb{R}\times\mathbb{R}^d$
	\[ |D_{\theta}g(t,y^1,z^1) - D_{\theta}g(t,y^2,z^2)|\leq K_{\theta}(t)(|y^1-y^2|+|z^1-z^2|)\text{  a.s.},  \]
	with $\{ K_{\theta}(t), 0\leq \theta,t\leq T \}$ a positive real valued adapted process satisfying $\int_{0}^{T}\|K_{\theta}\|_{\mathcal{H}^4}^4\mathrm{d}\theta <\infty.$
\end{itemize}
Then $(Y,Z)\in L^2(\mathbb{D}^{1,2}\times (\mathbb{D}^{1,2})^d).$ Furthermore for each $1\leq i\leq d$ a version of $\{ (D_{\theta}^iY_t, D_{\theta}^i Z_t);0\leq \theta, t\leq T  \}$ is given by
\begin{align*}
	D_{\theta}^iY_t &= 0, \quad D_{\theta}Z_t = 0, \qquad 0\leq t < \theta \leq T;\\
	D_{\theta}^i Y_t  &= D_{\theta}^i \xi  -\int_{t}^{T} D_{\theta}^i Z_s \mathrm{d}B_s\\
	& + \int_{t}^{T} [ (D_{\theta}^i g)(s,Y_s,Z_s) + \langle (\nabla g)(s,Y_s,Z_s), (D_{\theta}^i Y_s, D_{\theta}^i Z_s) \rangle  ]\mathrm{d}s \qquad \theta \leq t\leq T.
\end{align*}
Moreover $\{ D_{\theta} Y_t; 0\leq t \leq T \}$ defined by the solution to the above BSDE is a version of $\{ Z_t; 0\leq t \leq T \}.$
\end{thm}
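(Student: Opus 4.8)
The plan is to establish the result by a Picard approximation combined with the closability of the Malliavin derivative operator, in the spirit of \cite{KPQ97}. First I would introduce the Picard iterates $(Y^n,Z^n)$ defined by $Y^0\equiv 0$, $Z^0\equiv 0$ and, for $n\ge 0$,
\[
Y_t^{n+1}=\xi+\int_t^T g(s,Y_s^n,Z_s^n)\diff s-\int_t^T Z_s^{n+1}\diff B_s .
\]
Since $g$ has bounded derivatives in $(y,z)$, the generator of each step is globally Lipschitz, so classical BSDE theory yields $(Y^n,Z^n)\to(Y,Z)$ in $\mathcal{S}^2(\mathbb{R})\times\mathcal{H}^2(\mathbb{R}^d)$. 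The advantage of the iteration is that at each stage $s\mapsto g(s,Y_s^n,Z_s^n)$ is a \emph{fixed} process, so the Malliavin differentiability of $Y^{n+1}$ reduces to that of a BSDE whose terminal datum lies in $\mathbb{D}^{1,2}$ and whose generator is an $\mathbb{L}_{1,2}$ process.

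The second step is an induction on $n$ showing $(Y^n,Z^n)\in\mathbb{L}_{1,2}$. Assuming this at level $n$, the chain rule for $D$ together with hypothesis (ii) gives that $s\mapsto g(s,Y_s^n,Z_s^n)$ belongs to $\mathbb{L}_{1,2}$, with
\[
D_\theta\big[g(s,Y_s^n,Z_s^n)\big]=(D_\theta g)(s,Y_s^n,Z_s^n)+\partial_y g(s,Y_s^n,Z_s^n)\,D_\theta Y_s^n+\langle\partial_z g(s,Y_s^n,Z_s^n),D_\theta Z_s^n\rangle .
\]
Applying $D_\theta$ to the BSDE for $Y^{n+1}$ then produces, for $\theta\le t$, a \emph{linear} BSDE for $(D_\theta Y_t^{n+1},D_\theta Z_t^{n+1})$ with terminal value $D_\theta\xi$ and the driver above, while $D_\theta Y_t^{n+1}=D_\theta Z_t^{n+1}=0$ for $t<\theta$.

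The crucial quantitative step is a bound on $\sup_n\|(Y^n,Z^n)\|_{\mathbb{L}_{1,2}}$. I would apply the standard a priori estimate for linear BSDEs to the equation for $(D_\theta Y^{n+1},D_\theta Z^{n+1})$, using the uniform bound on $\nabla g$ to control the linear coefficients and the integrability conditions (i)--(ii) to control the source terms $D_\theta\xi$ and $(D_\theta g)(\cdot,Y^n,Z^n)$; integrating over $\theta$ and invoking a Gronwall argument gives an $n$-independent bound. With a uniform bound in the Hilbert space $L^2(\Omega\times[0,T];\mathcal{H})$, weak compactness produces a weakly convergent subsequence of $(DY^n,DZ^n)$; since $(Y^n,Z^n)\to(Y,Z)$ in $L^2$ and $D$ is a closed operator, the weak limit must coincide with $(DY,DZ)$, whence $(Y,Z)\in\mathbb{L}_{1,2}$. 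Passing to the limit in the linear BSDE, using the Lipschitz condition on $D_\theta g$ and the convergence of $(Y^n,Z^n)$ to move the limit through the driver, then yields the asserted equation for $(D_\theta Y_t,D_\theta Z_t)$.

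Finally I would identify $Z_t=D_tY_t$. Rewriting the original equation as $Y_t=Y_0-\int_0^t g(s,Y_s,Z_s)\diff s+\int_0^t Z_s\diff B_s$ and applying $D_\theta$ for $\theta\le t$, the formula $D_\theta\int_0^t Z_s\diff B_s=Z_\theta\mathbf{1}_{\{\theta\le t\}}+\int_\theta^t D_\theta Z_s\diff B_s$ gives
\[
D_\theta Y_t=Z_\theta-\int_\theta^t D_\theta\big[g(s,Y_s,Z_s)\big]\diff s+\int_\theta^t D_\theta Z_s\diff B_s ,
\]
and letting $t\downarrow\theta$, using continuity in $t$ of $D_\theta Y$, makes both integrals vanish, so $D_\theta Y_\theta=Z_\theta$, i.e.\ $\{D_tY_t\}$ is a version of $\{Z_t\}$. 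The main obstacle is the quantitative step: securing the $n$-uniform $\mathbb{L}_{1,2}$ estimate and justifying the passage to the limit in the derivative BSDE, since the source term $(D_\theta g)(\cdot,Y^n,Z^n)$ and the random Lipschitz coefficient $K_\theta$ must be controlled simultaneously in $\theta$ and $n$ under only the fourth-moment integrability afforded by (i)--(ii).
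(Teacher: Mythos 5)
Your proposal is correct, and it is essentially the classical proof of this result, which the paper itself does not prove but simply recalls from \cite{KPQ97}: Picard iteration with an induction showing each iterate lies in $\mathbb{L}_{1,2}$ and its Malliavin derivative solves a linear BSDE, an $n$-uniform a priori bound (exactly where the fourth-moment hypotheses on $\xi$, $g(\cdot,0,0)$ and $K_\theta$ in (i)--(ii) enter, as you correctly flag), closedness of the operator $D$ (cf.\ Lemma \ref{lemma 1.2.3}) to pass to the limit, and the identification $D_tY_t=Z_t$ by applying $D_\theta$ to the forward form of the equation and letting $t\downarrow\theta$. Since the paper gives no proof of its own here, there is nothing to contrast: your reconstruction faithfully matches the argument in the cited reference.
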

We end this section by recalling the following result  (see \cite[Theorem 1.2.3]{Nua06})
\begin{lemm}\label{lemma 1.2.3}
Let $(F_n)_{n\geq 1}$ be a sequence of random variables in $\mathbb{D}^{1,2}$ that converges to $F$ in $L^2(\Omega)$ and such that 
\[ \sup_{n\geq 1}\mathbb{E}\left[  \|D F_n \|_{L^2} \right] < \infty .\]
Then, $F$ belongs to $\mathbb{D}^{1,2}$, and the sequence of derivatives $(DF_n)_{n\geq 1}$ converges to $DF$ in the weak topology of $L^2(\Omega\times [0,T])$.
\end{lemm}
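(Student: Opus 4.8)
The plan is to exploit the weak compactness of bounded sequences in the Hilbert space $L^2(\Omega\times[0,T])$ together with the closedness of the Malliavin derivative, the latter expressed through its adjoint. First I would observe that the uniform bound $\sup_{n\geq 1}\mathbb{E}[\|DF_n\|_{L^2}]<\infty$ makes $(DF_n)_{n\geq 1}$ a bounded sequence in the Hilbert space $L^2(\Omega\times[0,T])$; by weak compactness of bounded sets in a Hilbert space there exist a subsequence $(DF_{n_k})_{k\geq 1}$ and an element $\eta\in L^2(\Omega\times[0,T])$ such that $DF_{n_k}$ converges weakly to $\eta$ as $k\to\infty$.

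The second step identifies $\eta$ as the Malliavin derivative of $F$. Let $\delta$ denote the adjoint of $D$ (the divergence, or Skorokhod integral, operator), so that the duality relation
\[ \mathbb{E}\big[\langle DG,u\rangle_{L^2([0,T])}\big]=\mathbb{E}[\,G\,\delta(u)\,] \]
holds for every $G\in\mathbb{D}^{1,2}$ and every smooth cylindrical process $u\in\mathrm{Dom}(\delta)$. Applying this with $G=F_{n_k}$ and passing to the limit along the subsequence, the left-hand side converges to $\mathbb{E}[\langle\eta,u\rangle_{L^2([0,T])}]$ by the weak convergence of $DF_{n_k}$ to $\eta$, while the right-hand side converges to $\mathbb{E}[F\,\delta(u)]$ since $F_{n_k}\to F$ in $L^2(\Omega)$ and $\delta(u)\in L^2(\Omega)$. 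Hence
\[ \mathbb{E}\big[\langle\eta,u\rangle_{L^2([0,T])}\big]=\mathbb{E}[\,F\,\delta(u)\,] \]
for all such $u$. By the characterisation of the domain of the closed operator $D$ through this duality, this is precisely the statement that $F\in\mathbb{D}^{1,2}$ with $DF=\eta$.

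Finally I would upgrade the subsequential weak convergence to convergence of the full sequence. Given any subsequence of $(DF_n)_{n\geq 1}$, the uniform bound again yields a further weakly convergent subsequence, and the argument of the previous step forces its weak limit to satisfy the same duality relation, hence to equal $DF$; the limit is uniquely determined because the cylindrical processes $u$ form a total set in $L^2(\Omega\times[0,T])$. Since every subsequence thus admits a further subsequence converging weakly to the common limit $DF$, the whole sequence $(DF_n)_{n\geq 1}$ converges weakly to $DF$.

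The main obstacle is the passage from weak convergence to the identification of the limit: the operator $D$ is sequentially closed only for strong convergence of both $F_n$ and $DF_n$, which is unavailable here since we control $DF_n$ merely in the weak topology. Routing the argument through the adjoint $\delta$ is exactly what converts the available weak convergence into the scalar duality identities needed to invoke the domain characterisation, and making this rigorous requires that the test processes $u$ range over a family dense enough both to certify membership in $\mathbb{D}^{1,2}$ and to pin down $DF$ uniquely.
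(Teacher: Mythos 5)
Your proposal is correct and follows essentially the same route as the source the paper relies on: the paper itself gives no proof of this statement but quotes it from \cite[Lemma 1.2.3]{Nua06} (with the intended hypothesis being the $L^{2}(\Omega\times[0,T])$-bound on $(DF_n)_{n\geq 1}$, exactly as you read it), and Nualart's proof is precisely your scheme --- weak compactness of the bounded sequence of derivatives, identification of the weak limit through the integration-by-parts duality with the divergence, and the subsequence-uniqueness argument upgrading subsequential to full-sequence weak convergence. The one point you rightly flag as delicate can be closed more simply than via a core property for $\delta$: since testing the duality only against smooth cylindrical $u$ characterises membership in $\mathrm{Dom}(\delta^{*})$ only if those $u$ form a core for $\delta$, it is cleaner to note that the graph of the closed operator $D$ is a closed linear subspace of $L^{2}(\Omega)\times L^{2}(\Omega\times[0,T])$, hence weakly closed, so that the weak convergence of $(F_{n_k},DF_{n_k})$ to $(F,\eta)$ in the product space already yields $F\in\mathbb{D}^{1,2}$ and $DF=\eta$ without any density argument for the test processes.
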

\section{Quadratic BSDEs with nonlinearity of type $f(|y|)|z|^2$}\label{sectexun}
The main aim of this section is to study the well posedness and derive an a-priori estimates of the  BSDE \eqref{eqmain1} when the parameters $\xi$ and $g$ satisfy the following assumptions:
\begin{assum}\label{assum1}
$\xi$ is an $\mathfrak{F}_T\text{-measurable}$ uniformly bounded random variable, i.e., $\| \xi \|_{L^{\infty}} < \infty;$ 
\end{assum}
\begin{assum}\label{assum2} The function $g:  [0,T]\times \Omega\times \mathbb{R} \times \mathbb{R}^d \rightarrow \mathbb{R}$ is $\mathfrak{F}\text{-predictable}$ and continuous in its space variables. There exist $\Lambda_0,\Lambda_y,\Lambda_z >0$, and a locally bounded function $f: \mathbb{R}\mapsto \mathbb{R}_{+}$; $f\in L^1_{loc}(\mathbb{R}^+)$ such that for all $(t,\omega,y,z), (t,\omega,y',z') \in [0,T]\times \Omega \times \mathbb{R} \times \mathbb{R}^{d}$, $\alpha \in [0,1)$ $\|g(t,0,0)\|_{L^{\infty}} \leq \Lambda_0$ and
\begin{align*}
	&|g(t,\cdot,y,z)-g(t,\cdot,y',z')|\\
	& \leq \Lambda_y\Big(1 + |z|^{\alpha} + |z'|^{\alpha}\Big)|y-y'| + \Lambda_z \Big(1+  ( f(|y|)+ f(|y'|))(|z|+|z'|)\Big)|z-z'| \text{ a.s.} 
\end{align*}
\end{assum}	
\begin{remark}\label{remk}
It is readily seen that under Assumptions \ref{assum1} and \ref{assum2}, the generator $g$ is necessary of the following form:
\begin{equation}\label{qg1}
	|g(t,\cdot,y,z)| \leqslant \Lambda_0 + \Lambda_y |y|+ \Lambda_z ( |z| + f(|y|)|z|^2)  \text{ a.s.} 
\end{equation}		
Indeed, 
\begin{align*}
	|g(t,y,z)|&\leq |g(t,0,0)| + |g(t,y,0) -g(t,0,0)| + |g(t,y,z) - g(t,y,0)|\\
	&\leq \Lambda_0 + \Lambda_y |y| + \Lambda_z |z| + 2\Lambda_z f(|y|)|z|^2 \text{ a.s.}
\end{align*}
\end{remark}
Unless otherwise stated, in this paper $\varphi$ stands for the smallest continuous and increasing function such that $f(x) \leq \varphi(x)$ for all $x \in \mathbb{R}$ (compare with \cite{Bahlali1}). 

The next result concerns the existence and uniqueness of solution to the BSDE \eqref{eqmain1} under Assumptions \ref{assum1}--\ref{assum2}.

\begin{thm}[Existence and uniqueness]\label{th1}
Under Assumptions \ref{assum1}--\ref{assum2}, the BSDE \eqref{eqmain1} has a unique solution $(Y,Z) \in \mathcal{S}^{\infty}(\mathbb{R})\times\mathcal{H}^2(\mathbb{R}^d).$

\end{thm}
\begin{proof} \leavevmode
{\bf Existence:}  Since the generator $g$ is of quadratic type, using  \cite[Theorem 3.1 and Corollary 3.2]{Bahlali1}, the  equation \eqref{eqmain1} has a maximal solution $(Y,Z) \in \mathcal{S}^{\infty}\times\mathcal{H}_{\text{BMO}} $ . \\
{\bf Uniqueness:}  Let $t \in [0,T]$ and let $(Y^i,Z^i) \in \mathcal{S}^{\infty}\times\mathcal{H}_{\text{BMO}}$ be two solutions to BSDE \eqref{eqmain1} with $i=1,2$. For each $i$, the $\mathcal{S}^{\infty}\text{-norms}$ of the process $Y^i$ is uniformly bounded by the constant $\Upsilon^{(1)}$  that only depends on $\|\xi \|_{L^{\infty}}$, $\Lambda_y$ and $T$ (see the Lemma below). Now, set $\delta Y = Y^1-Y^2,$ $\delta Z = Z^1-Z^2$ then the dynamics of $(\delta Y)_t$ is given by
\[
\delta Y_t = \int_{t}^{T} \big( g(s,Y^1_s,Z^1_s) -g(s,Y^2_s,Z^2_s)\big)\mathrm{d}s -\int_{t}^{T} \delta Z_s \mathrm{d}B_s .
\]
Let $\Gamma_t, e_t$ and $ \Pi_t$ be defined by
\begin{align*}
	\Gamma_t:&= \frac{g(t,Y^1_t,Z^1_t) - g(t,Y^2_t,Z^1_t)}{Y^1_t - Y^2_t}{1}_{\{Y^1_t - Y^2_t\neq 0\}}, \quad  e_t:= \exp\Big( \int_{0}^{t} \tilde \Gamma_s \mathrm{d}s \Big),\\
	\Pi_t:&= \frac{g(t,Y^2_t,Z^1_t) - g(t,Y^2_t,Z^2_t)}{|Z^1_t - Z^2_t|^2}(Z^1_t - Z^2_t){1}_{\{|Z^1_t - Z^2_t|\neq 0\}}.
\end{align*}
From Assumption \ref{assum2}, we obtain $|{\Pi}| \leq \Lambda_z (1 + 2f(|Y^2|)(|Z^1| + |Z^2|)),$ from which we have $\|{\Pi}\|_{\mathcal{H}_{\text{BMO}}} \leq \tilde{\Lambda}:= \Lambda_z (\sqrt{T} + \varphi(\|Y^2\|_{\tiny\mathcal{S}^{\infty}}) (\|Z^1\|_{\mathcal{H}_{\text{BMO}}} + \|Z^2\|_{\mathcal{H}_{\text{BMO}}}))$, where $\varphi(\|Y^1\|_{\tiny\mathcal{S}^{\infty}}):=\sup_{0\leq y\leq \|Y^1\|_{\tiny\mathcal{S}^{\infty}}} \varphi(y) <\infty$. Thus ${\Pi}*B$ is a BMO martingale since $Z^1, Z^2 \in \mathcal{H}_{\text{BMO}}$. Hence the probability measure ${\mathbb{Q}}$ with Radon-Nykodim density $\mathrm{d}{\mathbb{Q}}/\mathrm{d}{\mathbb{P}}= \mathcal{E}(\int_{0}^{\cdot}\tilde\Pi_{\cdot}\mathrm{d}B_{\cdot})$ is well defined and the process $B_{\cdot}^{\mathbb{Q}} = B_{\cdot} - \int_{0}^{\cdot} {\Pi}_s\mathrm{d}s $ is a ${\mathbb{Q}}\text{-Brownian motion}.$ Moreover for $r> 1$, we have $\mathcal{E}(\Pi) \in L^{r}$ (see Lemma \ref{lem 2.1}). On the other hand, using Assumption \ref{assum2} once more, we deduce that $ |{\Gamma}| \leq \Lambda (1 + 2|Z^1|^{\alpha})$. This implies that $\Gamma \in \mathcal{H}_{\text{BMO}}$. Thus, the process ${e}$ is integrable (see (P4) in Lemma \ref{lem 2.1}) i.e., there is $p\geq 1$ and $\varepsilon \in (0,2)$ such that $\mathbb{E}\Big[\exp\Big( p\int_{0}^{T} |\Gamma_t|^{\varepsilon}\mathrm{d}t \Big)\Big] < \infty$. In addition for all $p\geq 1,$ $\tilde{e} \in \mathcal{S}^p(\mathbb{R})$. The Girsanov theorem and  H\"{o}lder's inequality yield: for every $r> 1$
\begin{align*}
	\mathbb{E}^{{\mathbb{Q}}}\Big[ \int_{0}^{T} |{e}_s|^2 |\delta Z_s|^2 \mathrm{d}s  \Big] &\leq \mathbb{E}\Big[ \mathcal{E}\Big(\int_{0}^{T}\Pi_{s}\mathrm{d}B_{s}\Big)\sup_{0\leq t\leq T}|{e}_t|^2 \int_{0}^{T}  |\delta Z_s|^2 \mathrm{d}s\Big]\nonumber\\
	&\leq \mathbb{E}\Big[\mathcal{E}\Big(\int_{0}^{T}\Pi_{s}\mathrm{d}B_{s}\Big)^{r}\Big]^{\frac{1}{r}}
	\mathbb{E}\Big[\sup_{0\leq t\leq T}|{e}_t|^{2q}\Big( \int_{0}^{T}  |\delta Z_s|^2 \mathrm{d}s\Big)^{r'}\Big]^{\frac{1}{r'}} < \infty,
\end{align*}
where $r'$ is the H\"{o}lder conjugate of $r$.  Thus, the stochastic integral $\int_{0}^{\cdot}{e}_s \delta Z_s\mathrm{d}B_s^{{\mathbb{Q}}}$ defines a true ${\mathbb{Q}}\text{-martingale}$.
By applying It\^{o}'s formula to the semimartingale $({e}\delta Y)_t$ under $\tilde{\mathbb{Q}}$, we obtain that
\begin{align}\label{3.5}
	{e}_t\delta Y_t + \int_{t}^{T} {e}_s \delta Z_s \mathrm{d}B_s^{{\mathbb{Q}}} = \int_{t}^{T} {e}_s[ -{\Gamma}_s\delta Y_s + g(s,Y^1_s,Z^1_s)- g(s,Y^2_s,Z^1_s)  ] \mathrm{d}s = 0.
\end{align}
It follows that ${e}_t\delta Y_t = 0\,\, {\mathbb{Q}}\text{-a.s.}$ for all $t \in [0,T]$. Thus, ${e}_t\delta Y_t = 0\,\, {\mathbb{P}}\text{-a.s.}$ for all $t \in [0,T]$ (since ${\mathbb{Q}}$ and $\mathbb{P}$ are equivalents). Consequently $\delta Y_t = 0 \,\, {\mathbb{P}}\text{-a.s.}$ for all $t\in [0,T]$ provided that ${e}_t(\omega) > 0$ for all $\omega$ outside a $\mathbb{P}\text{-negligible}$ set $A$. The later is satisfied due to the continuity of the process ${e}_t$ for all $t\in [0,T].$
Hence, $Y^1_t = Y^2_t \,\, \mathbb{P}\text{-a.s.},\forall t\in [t,T]$.
Using \eqref{3.5} and the fact that $\delta Y =0,$ we deduce from the It\^{o}'s isometry that $\mathbb{E}^{{\mathbb{Q}}}\int_{0}^{T}|{e}_s|^2|\delta Z_s|^2 \mathrm{d}s = 0,$ the latter implies that $Z^1_t = Z^2_t \,\, \mathrm{d}t\otimes\mathbb{P}\text{-a.s.}$. This conclude the proof.
\end{proof}

Below we provide a  more precise bounds for both the $\mathcal{S}^{\infty}$ and $\mathcal{H}_{\text{BMO}}$ norms of the processes $Y$ and $Z$ respectively
\begin{lemm}
Under Assumptions \ref{assum1} and \ref{assum2}, the solution $(Y,Z)$ to the BSDE \eqref{eqmain1} satisfies the following bounds:
\begin{align}
	\|Y\|_{\mathcal{S}^{\infty}} &\leq \Upsilon^{(1)} := (\|\xi \|_{L^{\infty}} +\Lambda_0 T) e^{\Lambda_y T},\label{bound1}\\
	\|Z*B\|_{BMO} &\leq \Upsilon^{(2)} \label{bound2}
\end{align} 
where, $\Upsilon^{(2)}:=  2\Upsilon^{(1)}\Big(\Upsilon^{(1)} + T(\Lambda_0 +\Lambda_z +\Lambda_y\Upsilon^{(1)})\Big) \exp(4\| (1+\Lambda_zf) \|_{L^1[0,\Upsilon^{(1)}]}).$
\end{lemm}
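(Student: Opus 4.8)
The plan is to establish the two bounds separately, proving the $\mathcal{S}^\infty$-estimate \eqref{bound1} first and then feeding it into the proof of \eqref{bound2}. Throughout I use that Theorem \ref{th1} already guarantees $(Y,Z)\in\mathcal{S}^\infty(\mathbb{R})\times\mathcal{H}_{\text{BMO}}$, so all the quantities below are qualitatively finite and the localization/integrability issues are handled by the BMO machinery already invoked in the uniqueness argument.

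For \eqref{bound1}, the decisive observation is that the claimed constant involves neither $\Lambda_z$ nor $f$, so the $z$-dependence of $g$ must be removed by a change of measure. Exactly as in the uniqueness proof, I would write $g(s,Y_s,Z_s)-g(s,Y_s,0)=\beta_s\cdot Z_s$ with $\beta_s:=\big(g(s,Y_s,Z_s)-g(s,Y_s,0)\big)|Z_s|^{-2}Z_s\,\mathbf{1}_{\{Z_s\neq 0\}}$; Assumption \ref{assum2} gives $|\beta_s|\le\Lambda_z(1+2f(|Y_s|)|Z_s|)$, so $\beta\in\mathcal{H}_{\text{BMO}}$ just as for $\Pi$ there. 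Then $\mathcal{E}(\int\beta\,\diff B)$ is a uniformly integrable martingale (Lemma \ref{lem 2.1}), the measure $\mathbb{Q}$ with that density is equivalent to $\mathbb{P}$, $B^{\mathbb{Q}}=B-\int_0^\cdot\beta_s\,\diff s$ is a $\mathbb{Q}$-Brownian motion, and, BMO being preserved under this Girsanov transform, $\int_0^\cdot Z_s\,\diff B^{\mathbb{Q}}_s$ is a true $\mathbb{Q}$-martingale. Under $\mathbb{Q}$ the equation reads $Y_t=\xi+\int_t^T g(s,Y_s,0)\,\diff s-\int_t^T Z_s\,\diff B^{\mathbb{Q}}_s$, so conditioning on $\mathfrak{F}_t$ kills the stochastic integral and, using $|g(s,y,0)|\le\Lambda_0+\Lambda_y|y|$, yields $|Y_\tau|\le\|\xi\|_{L^\infty}+\Lambda_0T+\Lambda_y\int_\tau^T\|Y\|_{\mathcal{S}^\infty([s,T])}\,\diff s$ for every $\tau\in[t,T]$. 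Taking the essential supremum and applying a backward Gronwall inequality to $\psi(t):=\|Y\|_{\mathcal{S}^\infty([t,T])}$ then gives $\|Y\|_{\mathcal{S}^\infty}\le(\|\xi\|_{L^\infty}+\Lambda_0T)e^{\Lambda_yT}=\Upsilon^{(1)}$.

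For \eqref{bound2} I would linearize the quadratic term by a convex exponential (Zvonkin/Kobylanski-type) transformation with variable coefficient, using crucially that $|Y|\le\Upsilon^{(1)}$. Writing $I:=\|(1+\Lambda_zf)\|_{L^1[0,\Upsilon^{(1)}]}$, choose an even convex $\Phi\in C^2(\mathbb{R})$ with $\Phi(0)=\Phi'(0)=0$ determined for $y\ge0$ by the integrating-factor ODE $\Phi''(y)=(2\Lambda_zf(y)+1)\Phi'(y)+1$; this gives the pointwise coercivity $\tfrac12\Phi''(y)-|\Phi'(y)|\big(\Lambda_zf(|y|)+\tfrac12\big)=\tfrac12$ together with the two-sided control $0\le\Phi(y)\le(\Upsilon^{(1)})^2e^{2I}$ and $|\Phi'(y)|\le\Upsilon^{(1)}e^{2I}$ on the range $[-\Upsilon^{(1)},\Upsilon^{(1)}]$ of $Y$ (note $1+2\Lambda_zf\le 2(1+\Lambda_zf)$). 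Applying It\^o to $\Phi(Y_t)$, bounding the drift from below by \eqref{qg1} and absorbing the linear term through $\Lambda_z|Z_s|\le\tfrac12|Z_s|^2+\tfrac12\Lambda_z^2$, the $|Z_s|^2$-contributions collapse to the coercive remainder $\tfrac12|Z_s|^2$. Since $\Phi'(Y)$ is bounded and $Z\in\mathcal{H}_{\text{BMO}}$, the integral $\int\Phi'(Y_s)Z_s\,\diff B_s$ is a true martingale, so conditioning on $\mathfrak{F}_\tau$ for an arbitrary stopping time $\tau$ and dropping the nonpositive term $-\Phi(Y_\tau)$ gives $\tfrac12\,\mathbb{E}\big[\int_\tau^T|Z_s|^2\,\diff s\mid\mathfrak{F}_\tau\big]\le\mathbb{E}\big[\Phi(\xi)\mid\mathfrak{F}_\tau\big]+\mathbb{E}\big[\int_\tau^T|\Phi'(Y_s)|(\Lambda_0+\Lambda_y|Y_s|+\tfrac12\Lambda_z^2)\,\diff s\mid\mathfrak{F}_\tau\big]$. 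Substituting the bounds on $\Phi,\Phi'$ and $|Y|\le\Upsilon^{(1)}$, taking $\sup_\tau$ and the essential supremum in $\omega$ then yields $\mathbb{E}\big[\int_\tau^T|Z_s|^2\,\diff s\mid\mathfrak{F}_\tau\big]\le 2\Upsilon^{(1)}\big(\Upsilon^{(1)}+T(\Lambda_0+\Lambda_z+\Lambda_y\Upsilon^{(1)})\big)e^{4I}=\Upsilon^{(2)}$, which is \eqref{bound2}.

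The hard part will be the third step: the construction and uniform control of $\Phi$. Because $f$ is only locally integrable (neither bounded nor continuous), one must pass to the continuous increasing majorant $\varphi$ to make sense of the ODE and guarantee $\Phi\in C^2$, then verify that $\Phi$ and $\Phi'$ are genuinely dominated by $\exp\big(c\int_0^{\Upsilon^{(1)}}(1+\Lambda_zf)\big)$ on the range of $Y$ and that, after absorbing both the linear-in-$z$ term and the quadratic term, a strictly positive multiple of $|Z|^2$ survives. By contrast the measure change in \eqref{bound1} is comparatively routine once $\beta\in\mathcal{H}_{\text{BMO}}$ is checked, the only delicate point there being to keep the $\mathbb{Q}$-conditional expectations dominated by the deterministic running norm $\|Y\|_{\mathcal{S}^\infty([s,T])}$ so that the Gronwall argument closes.
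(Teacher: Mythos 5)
Your proof of \eqref{bound1} coincides with the paper's own: the same linearizing kernel $\beta$ (called $\Pi$ in the paper), the same Girsanov change of measure justified through the BMO properties of Lemma \ref{lem 2.1}, and the same conditional-expectation-plus-Gronwall closure; running Gronwall on $\psi(t)=\|Y\|_{\mathcal{S}^{\infty}([t,T])}$ is, if anything, a cleaner version of the paper's step. For \eqref{bound2} your architecture is also the paper's — an exponential integrating-factor test function that annihilates the $f(|y|)|z|^2$ contribution, followed by conditioning at an arbitrary stopping time — but you diverge exactly at the point you yourself flag as the hard part, and the divergence costs you the stated constant in two distinct ways.

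First, the paper never smooths $f$: it takes $v(x)=\int_0^x K(y)\exp\bigl(2\int_0^y f_1(u)\,\mathrm{d}u\bigr)\mathrm{d}y$ with $f_1=\Lambda_z(1+f)$, which lies only in $W^{1,2}_{\mathrm{loc}}$ and solves $\tfrac12 v''-f_1v'=\tfrac12$ almost everywhere, and then applies the It\^o--Krylov formula for BSDEs (\cite[Theorem 2.1]{Ouknine}) to $v(|Y|)$. That is precisely what allows the exponent to involve $\|1+\Lambda_z f\|_{L^1[0,\Upsilon^{(1)}]}$ itself (modulo the paper's own notational slip between $\Lambda_z(1+f)$ and $1+\Lambda_z f$). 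Your insistence on $\Phi\in C^2$ forces the continuous increasing majorant $\varphi\geq f$ into the ODE, and since $\varphi$ may be strictly — even enormously — larger than $f$ on $[0,\Upsilon^{(1)}]$, your route only proves \eqref{bound2} with $\|1+\Lambda_z\varphi\|_{L^1[0,\Upsilon^{(1)}]}$ in the exponent, a genuinely weaker statement than the lemma as formulated. Second, and independently, your Young absorption $\Lambda_z|z|\leq\tfrac12|z|^2+\tfrac12\Lambda_z^2$ leaves $\tfrac{T}{2}\Lambda_z^2$ inside the bracket, yet your final display silently replaces it by $T\Lambda_z$: taking $f\equiv 0$ (so $I=\Upsilon^{(1)}$, independent of $\Lambda_z$) and $\Lambda_z$ large, the spare factor $e^{2I}\leq e^{4I}$ cannot absorb a term growing like $\Lambda_z^2$, so the claimed identification with $\Upsilon^{(2)}$ does not follow from your computation. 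Both defects vanish if you adopt the paper's absorption $\Lambda_z|z|\leq\Lambda_z+\Lambda_z|z|^2$ — folding the linear term into the quadratic coefficient $f_1=\Lambda_z(1+f)$ \emph{before} building the test function, which is exactly how the bracket $\Lambda_0+\Lambda_z+\Lambda_y\Upsilon^{(1)}$ arises — and replace the classical It\^o step by It\^o--Krylov so that no regularization of $f$ is needed.
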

\begin{proof}
Let $\Pi_t$ be defined by: $ |Z_t|^2\Pi_t:= (g(t,Y_t,Z_t) - g(t,Y_t,0))Z_t{1}_{\{Z_t\neq 0\}}$. By using the same observations as in the proof of the previous theorem, we have that $\mathcal{E}(\Pi*B)$ is uniformly integrable and the process $B^{\mathbb{Q}}_{\cdot}= B_{\cdot} -\int_{0}^{\cdot}\Pi_s\mathrm{d}s $ is a $\mathbb{Q}\text{-Brownian motion}$ with the measure $\mathbb{Q}$ given by $\mathrm{d}\mathbb{Q} = \mathcal{E}(\Pi*B)\mathrm{d}\mathbb{P}$. Then,
\begin{align*}
	|Y_t|&\leq \mathbb{E}^{\mathbb{Q}}\Big( |\xi| + \int_{t}^{T} |g(s,Y_s,0)|\mathrm{d}s/\mathfrak{F}_t\Big) \leq \|\xi\|_{\tiny L^{\infty}} + \Lambda_0 T + \Lambda_y\mathbb{E}^{\mathbb{Q}}( \int_{t}^{T} |Y_s|\mathrm{d}s/\mathfrak{F}_t ).
\end{align*}
Therefore, the Gronwall's lemma yields :
$|Y_t| \leq (\|\xi\|_{\tiny L^{\infty}} + \Lambda_0 T)e^{\Lambda_y T} .$

On the other hand, for any {locally} integrable function $f_1$, we define: 
\begin{align*}  
	K(y):= \int_{0}^{y} \exp\Big(-2 \int_{0}^{z}f_1(u)\mathrm{d}u\Big)\mathrm{d}z\\
	v(x) :=  \int_{0}^{x} K(y)\exp\Big(2 \int_{0}^{y}f_1(u)\mathrm{d}u\Big)\mathrm{d}y. 
\end{align*}
It's readily seen that $ v\in W_{\text{loc}}^{1,2}(\mathbb{R})$ and satisfies almost everywhere: $1/2v''(x)-f_1(x)v'(x) = 1/2$ (see \cite{Bahlali1}). Moreover for any $R >0$ such that $|x|\leq R$, we have $|v(x)|\leq R^2\exp(4\| f_1\|_{L^1[0,\Upsilon^{(1)}]})$ and $|v'(x)| \leq  R\exp(4\| f_1\|_{L^1[0,\Upsilon^{(1)}]})$. Recall from Remark \ref{remk} that the driver $g$ satisfies $|g(t,y,z)|\leq \Lambda + \Lambda_y |y| + f_1(|y|)|z|^2$, where $\Lambda= \Lambda_0 + \Lambda_z$ and $f_1(|y|):= \Lambda_z (1+ f(|y|)).$ Then, using the It\^{o}-Krylov formula for BSDE (see \cite[Theorem 2.1]{Ouknine}) we obtain that
\begin{align*}
	v(|Y_{\tau}|) &= v(|Y_T|) + \int_{\tau}^{T} \sgn(Y_r)v'(|Y_u|)g(u,Y_u,Z_u)\mathrm{d}u - \frac{1}{2}\int_{\tau}^{T} v''(|Y_u|)|Z_u|^2\mathrm{d}u + M_{\tau}^T\notag\\
	&\leq v(|Y_T|) -\frac{1}{2} \int_{\tau}^{T} |Z_u|^2\mathrm{d}u + \int_{\tau}^{T}(\Lambda + \Lambda_y|Y_u|)v'(|Y_u|)\mathrm{d}u +  M_{\tau}^T
\end{align*}
for any stopping time $\tau$. Here $ M_{\tau}^T$ represents the martingale part. By taking the conditional  expectation with respect to $\mathfrak{F}_{\tau}$, we deduce that
\begin{align*}
	\frac{1}{2} \mathbb{E}\Big(\int_{\tau}^{T} |Z_u|^2\mathrm{d}u/\mathfrak{F}_{\tau}\Big)	&\leq \mathbb{E}\Big( v(|Y_T|)  + \int_{\tau}^{T}(\Lambda + \Lambda_y|Y_u|)v'(|Y_u|)\mathrm{d}u/ \mathfrak{F}_{\tau} \Big).
\end{align*}
The result is obtained from the bounds of $Y$, $v$ and $v'$. This completes the proof.
\end{proof}
\begin{remark} \label{remmaldif1}\leavevmode
Note that the bound in \eqref{bound1} does not depend on $\alpha$, $\Lambda_z$, thus not on the $\mathcal{H}_{\text{BMO}}\text{-norm}$ of the control process $Z$. In addition, if $f\equiv 0,$ i.e., the driver $g$ is Lipschitz in $z$ and still stochastic Lipschitz in $y$, the BSDE \eqref{eqmain1} has a unique solution $(Y,Z) \in \mathcal{S}^{\infty}(\mathbb{R})\times \mathcal{H}^2(\mathbb{R}^d)$ such that the bound of $Y$ is given by \eqref{bound1}.
\end{remark}
In the next lemmas, we give main (a-priori) estimates of this paper. They will be extensively used to establish both the classical and variational differentiability of solution the $(Y,Z)$ to the BSDE \eqref{Main}. The proof of Lemmas \ref{thap} and \ref{thap2} are deported to Appendix \ref{proofs}. 
\begin{lemm}\label{thap0}
Let $(Y, Z) \in \mathcal{S}^{\infty}\times \mathcal{H}_{\text{BMO}}$ be the solution to BSDE \eqref{eqmain1}, with coefficient $(\xi, g)$ satisfying Assumptions \ref{assum1}--\ref{assum2}. Assuming further that for any $\gamma\geq 1$ we have $\int_0^T |g(s,0,0)|\mathrm{d}s \in L^{\gamma}$. Then for $p > 1$, there exists $q \in (1, \infty)$ only depending on, $T$, $r$ and the BMO norm of $Z*B$ such that
\begin{align*}
	\mathbb{E}\Big[ \sup_{t\in [0,T]}| Y_t|^{2p} \Big] + \mathbb{E}\left[ \Big(\int_0^T |Z_t|^2\mathrm{d}t \Big)^p \right] \leq C\mathbb{E}\Big[ |\xi|^{2pq} + \Big(\int_{0}^{T}| g(s,0,0)|\mathrm{d}s \Big)^{2pq}  \Big]^{\frac{1}{q}}	.
\end{align*}
\end{lemm}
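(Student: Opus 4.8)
The plan is to strip the quadratic nonlinearity in $Z$ by linearising the generator in the control variable and performing a Girsanov change of measure, after which the backward equation becomes linear in $Y$ with a \emph{deterministic} Lipschitz constant and no dependence on $Z$ at all. Standard linear a priori estimates then apply under the new measure $\mathbb{Q}$, and the proof is completed by transferring the resulting bounds back to $\mathbb{P}$ via the reverse H\"older inequality for BMO martingales, which is precisely where the exponent $q>1$ is produced.

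First I would linearise $g$ in $z$ exactly as in the uniqueness part of Theorem \ref{th1}. Writing $g_0(s):=g(s,0,0)$ and
\[
\Pi_s := \frac{g(s,Y_s,Z_s)-g(s,Y_s,0)}{|Z_s|^2}Z_s\,\mathbf{1}_{\{Z_s\neq 0\}},
\]
Assumption \ref{assum2} gives $|\Pi_s|\le \Lambda_z(1+2f(|Y_s|)|Z_s|)$, so that, using $\|Y\|_{\mathcal{S}^{\infty}}\le\Upsilon^{(1)}$ and the monotonicity of $\varphi$, the martingale $\Pi*B$ is BMO with norm controlled by $\Lambda_z(\sqrt T+2\varphi(\Upsilon^{(1)})\|Z*B\|_{\mathcal{H}_{\text{BMO}}})$. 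Hence $\mathcal{E}(\Pi*B)$ is uniformly integrable and $B^{\mathbb{Q}}_\cdot:=B_\cdot-\int_0^\cdot\Pi_s\,ds$ is a $\mathbb{Q}$-Brownian motion under $d\mathbb{Q}=\mathcal{E}(\Pi*B)_T\,d\mathbb{P}$. Under $\mathbb{Q}$ the equation reads
\[
Y_t=\xi+\int_t^T g(s,Y_s,0)\,ds-\int_t^T Z_s\,dB_s^{\mathbb{Q}},
\]
and since $|g(s,y,0)-g(s,y',0)|\le\Lambda_y|y-y'|$ (the $|z|^{\alpha}$ terms vanish at $z=z'=0$), the driver satisfies $|g(s,Y_s,0)|\le\Lambda_0+\Lambda_y|Y_s|$ with the \emph{deterministic} Lipschitz constant $\Lambda_y$; crucially, no quadratic term in $Z$ survives.

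The two bounds are then obtained under $\mathbb{Q}$. For $Y$, a further linearisation in $y$ produces an integrating factor bounded by $e^{\Lambda_y T}$, whence $|Y_t|\le e^{\Lambda_y T}\,\mathbb{E}^{\mathbb{Q}}[\Xi_0\mid\mathfrak{F}_t]$ with $\Xi_0:=|\xi|+\int_0^T|g_0(s)|\,ds$, and Doob's maximal inequality under $\mathbb{Q}$ yields $\mathbb{E}^{\mathbb{Q}}[\sup_t|Y_t|^{2p'}]\le C\,\mathbb{E}^{\mathbb{Q}}[\Xi_0^{2p'}]$ for every $p'\ge1$. For $Z$, It\^o's formula applied to $|Y|^2$ under $\mathbb{Q}$ gives the energy identity
\[
\int_0^T|Z_s|^2\,ds=|\xi|^2-|Y_0|^2+2\int_0^T Y_s\,g(s,Y_s,0)\,ds-2\int_0^T Y_s Z_s\,dB_s^{\mathbb{Q}},
\]
in which, thanks to the measure change, $|Z|^2$ no longer appears on the right-hand side. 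Bounding $|Y_s g(s,Y_s,0)|\le\Lambda_0|Y_s|+\Lambda_y|Y_s|^2$, estimating the stochastic integral by the Burkholder--Davis--Gundy inequality, and absorbing a term $\varepsilon\,\mathbb{E}^{\mathbb{Q}}[(\int_0^T|Z|^2ds)^{p'}]$ through Young's inequality gives $\mathbb{E}^{\mathbb{Q}}[(\int_0^T|Z_s|^2ds)^{p'}]\le C\,\mathbb{E}^{\mathbb{Q}}[\Xi_0^{2p'}]$ for every $p'\ge1$.

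Finally I would pass from $\mathbb{Q}$ to $\mathbb{P}$. For $W\in\{\sup_t|Y_t|^{2p},\,(\int_0^T|Z|^2ds)^p\}$ one writes $\mathbb{E}^{\mathbb{P}}[W]=\mathbb{E}^{\mathbb{Q}}[\mathcal{E}(\Pi*B)_T^{-1}W]$ and applies H\"older's inequality; since $\Pi*B^{\mathbb{Q}}$ is BMO under $\mathbb{Q}$, the reverse H\"older (energy) inequality of Lemma \ref{lem 2.1} makes $\mathcal{E}(\Pi*B)_T^{-1}\in L^{s}(\mathbb{Q})$ for some $s>1$ (equivalently an exponent $r$) depending only on $T$ and $\|Z*B\|_{\mathcal{H}_{\text{BMO}}}$, so the $\mathbb{Q}$-estimates above (used at level $ps'$) bound $\mathbb{E}^{\mathbb{P}}[W]$ by $C(\mathbb{E}^{\mathbb{Q}}[\Xi_0^{2ps'}])^{1/s'}$; a further H\"older step together with $\mathcal{E}(\Pi*B)_T\in L^{s''}(\mathbb{P})$ (reverse H\"older under $\mathbb{P}$) turns this into $C(\mathbb{E}^{\mathbb{P}}[\Xi_0^{2pq}])^{1/q}$ with $q:=s's''\in(1,\infty)$, and $\Xi_0^{2pq}\lesssim|\xi|^{2pq}+(\int_0^T|g_0|ds)^{2pq}$ yields the claim, the right-hand side being finite by the standing hypothesis $\int_0^T|g_0(s)|ds\in L^{\gamma}$ for all $\gamma$. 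The main obstacle throughout is the quadratic term $f(|y|)|z|^2$; it is exactly the $z$-linearisation into a BMO martingale and the induced change of measure that neutralise it, the sole cost being the reverse H\"older exponent $q$, whose dependence on $T$, $r$ and $\|Z*B\|_{\mathcal{H}_{\text{BMO}}}$ is as asserted.
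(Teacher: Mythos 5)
Your proof is correct, and it is worth noting that the paper never writes out a proof of Lemma \ref{thap0} at all: it declares the proof omitted, the intended argument being the specialisation $\xi^2=0$, $g^2\equiv 0$ (so $(Y^2,Z^2)=(0,0)$, $\delta g = g^1(\cdot,0,0)$) of Lemma \ref{thap}, whose appendix proof runs on the same engine as yours --- linearisation, Girsanov with a BMO density, Doob/BDG under $\mathbb{Q}$, and the reverse H\"older cascade $\mathcal{E}(\Pi*B)\in L^{r}(\mathbb{P})$, $\mathcal{E}(\Pi*B)^{-1}=\mathcal{E}(-\Pi*B^{\mathbb{Q}})\in L^{r_1}(\mathbb{Q})$ to return to $\mathbb{P}$. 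Where you genuinely deviate, to your advantage, is in the $y$-direction: the paper's proof of Lemma \ref{thap} must linearise in $y$ with the \emph{stochastic} Lipschitz coefficient $|\Gamma_s|\le \Lambda_y(1+2|Z^1_s|^{\alpha})$, and therefore needs the exponential weight $A_T=\exp\{2\int_0^T \Lambda_y(1+|Z^1_s|^{\alpha})\,\mathrm{d}s\}$ together with the exponential-moment property (P4) of Lemma \ref{lem 2.1} (valid only because $\alpha<1$), at the cost of an extra H\"older exponent $\nu$ in $q=r_1'\nu r'$; you instead observe that after the $z$-linearisation the residual driver $g(s,\cdot,0)$ is $\Lambda_y$-Lipschitz with a \emph{deterministic} constant, so your integrating factor is bounded by $e^{\Lambda_y T}$ and the $A_T$/(P4) step and the exponent $\nu$ disappear, giving $q=s's''$ with the asserted dependence. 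Two small polish points: in the $Z$-estimate you should bound $|Y_s\,g(s,Y_s,0)|\le |Y_s||g(s,0,0)|+\Lambda_y|Y_s|^2$ rather than $\Lambda_0|Y_s|+\Lambda_y|Y_s|^2$, since using the constant $\Lambda_0$ breaks the homogeneity of the target bound and leaves a lower-order term $\mathbb{E}^{\mathbb{Q}}[\Xi_0^{p'}]$ that is not dominated by $\mathbb{E}^{\mathbb{Q}}[\Xi_0^{2p'}]$; and you should record that $Z*B^{\mathbb{Q}}$ is BMO$(\mathbb{Q})$ (BMO is preserved under a Girsanov shift by a BMO martingale), which is what makes your conditional-expectation representation of $Y$ legitimate (true $\mathbb{Q}$-martingale) and supplies the a priori finiteness of $\mathbb{E}^{\mathbb{Q}}[(\int_0^T|Z_s|^2\mathrm{d}s)^{p'}]$ needed for the Young absorption. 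Both are one-line repairs; the argument is otherwise complete.
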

The above Lemma, provides the moments estimates for BSDEs with drivers satisfying a stochastic Lipschitz condition both on their second and third component. Its proof is omitted.
\begin{lemm}[A priori estimates]\label{thap}
Let $(Y^{i}, Z^{i}) \in \mathcal{S}^{\infty}\times \mathcal{H}_{\text{BMO}}$ be the solution to BSDE \eqref{eqmain1}, with terminal value $\xi^i$ and generator $g^i$ for $i\in \{1,2 \}$ satisfying Assumptions \ref{assum1}-\ref{assum2}. Then for $p > 1$, there exists $q \in (1, \infty)$ only depending on, $T$, $r$ and the BMO norm of $Z*B$ such that
\begin{align*}
	&\| Y^1 - Y^2 \|_{\mathcal{S}^{2p}}^{2p} + \|Z^1 - Z^2\|_{\mathcal{H}^{2p}}^{2p} \\
	&\leq C\mathbb{E}\Big[ |\xi_1 -\xi_2|^{2pq} + \Big(\int_{0}^{T}| g^1(s,Y^2_s,Z^2_s) - g^2(s,Y^2_s,Z^2_s)|\mathrm{d}s \Big)^{2pq}  \Big]^{\frac{1}{q}}	.
\end{align*}
\end{lemm}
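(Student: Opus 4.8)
The plan is to follow the linearisation-plus-change-of-measure scheme of \cite{ArImDR, ImkDos}, adapted to the stochastic Lipschitz and quadratic structure encoded in Assumption \ref{assum2}. Write $\delta Y = Y^1 - Y^2$, $\delta Z = Z^1 - Z^2$, $\delta\xi = \xi^1 - \xi^2$ and $\delta g_s = g^1(s,Y^2_s,Z^2_s) - g^2(s,Y^2_s,Z^2_s)$, so that $\delta Y$ solves
\[
\delta Y_t = \delta\xi + \int_t^T \big[ g^1(s,Y^1_s,Z^1_s) - g^1(s,Y^2_s,Z^2_s) + \delta g_s\big]\,\mathrm{d}s - \int_t^T \delta Z_s\,\mathrm{d}B_s.
\]
First I would linearise the difference $g^1(s,Y^1_s,Z^1_s) - g^1(s,Y^2_s,Z^2_s)$ by introducing, exactly as in the uniqueness proof of Theorem \ref{th1}, the adapted processes $\Gamma_s$ (the $y$-slope of $g^1$) and $\Pi_s$ (the $z$-slope of $g^1$), so that this difference equals $\Gamma_s\,\delta Y_s + \langle \Pi_s,\delta Z_s\rangle$. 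Assumption \ref{assum2} gives $|\Gamma_s| \le \Lambda_y(1 + 2|Z^1_s|^\alpha)$ with $\alpha \in [0,1)$ and $|\Pi_s| \le \Lambda_z(1 + 2f(|Y^2_s|)(|Z^1_s| + |Z^2_s|))$, so that $\Pi*B \in \text{BMO}$ with norm controlled by $\Lambda_z$, $\varphi(\|Y^2\|_{\mathcal{S}^\infty})$ and $\|Z^1*B\|_{\text{BMO}}, \|Z^2*B\|_{\text{BMO}}$.

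Next I would remove the two linear terms. Girsanov's theorem with density $\mathcal{E}(\Pi*B)$ defines an equivalent measure $\mathbb{Q}$ under which $B^{\mathbb{Q}} = B - \int_0^\cdot \Pi_s\,\mathrm{d}s$ is a Brownian motion, and Lemma \ref{lem 2.1} guarantees the reverse H\"older property $\mathcal{E}(\Pi*B) \in L^r$ for some $r = r(\|\Pi*B\|_{\text{BMO}}) > 1$. Setting $e_t = \exp(\int_0^t \Gamma_s\,\mathrm{d}s)$ and applying It\^o's formula to $e_t\,\delta Y_t$ under $\mathbb{Q}$ cancels the $\Gamma$ and $\Pi$ contributions, yielding
\[
e_t\,\delta Y_t = \mathbb{E}^{\mathbb{Q}}\Big[ e_T\,\delta\xi + \int_t^T e_s\,\delta g_s\,\mathrm{d}s \,\Big|\, \mathfrak{F}_t\Big].
\]
Dividing by $e_t$, taking the supremum over $t$, Doob's inequality under $\mathbb{Q}$ and H\"older's inequality (to pass from $\mathbb{Q}$ back to $\mathbb{P}$ using $\mathcal{E}(\Pi*B) \in L^r$) would give the $\mathcal{S}^{2p}$ bound for $\delta Y$. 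Here the exponential integrability of the weight $e$ — property (P4) of Lemma \ref{lem 2.1}, valid because $\int_0^T|\Gamma_s|\,\mathrm{d}s \lesssim T + \int_0^T|Z^1_s|^\alpha\,\mathrm{d}s$ with $\alpha<1$, whence the energy inequality makes the latter exponentially integrable of all orders — is what allows $\sup_t e_t^{\pm 1}$ to be absorbed into an $L^{q'}$ factor at the cost of enlarging the exponent $2p \mapsto 2pq$.

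For the control process I would apply It\^o's formula to $|\delta Y_t|^2$ under $\mathbb{P}$, giving
\[
\int_0^T |\delta Z_s|^2\,\mathrm{d}s = |\delta\xi|^2 - |\delta Y_0|^2 + 2\int_0^T \delta Y_s\big(\Gamma_s\delta Y_s + \langle\Pi_s,\delta Z_s\rangle + \delta g_s\big)\,\mathrm{d}s - 2\int_0^T \delta Y_s\,\delta Z_s\,\mathrm{d}B_s.
\]
Raising to the power $p$, taking expectation, using the Burkholder--Davis--Gundy inequality on the stochastic integral and Young's inequality on the cross term $\int_0^T \delta Y_s\langle\Pi_s,\delta Z_s\rangle\,\mathrm{d}s \le \tfrac14\int_0^T|\delta Z_s|^2\,\mathrm{d}s + \sup_s|\delta Y_s|^2\int_0^T|\Pi_s|^2\,\mathrm{d}s$ lets me absorb the $\int_0^T|\delta Z|^2$ term on the left. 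The remaining terms are bounded by the already-established $\mathcal{S}^{2p}$ estimate for $\delta Y$ together with the moments of all orders of $\int_0^T|\Pi_s|^2\,\mathrm{d}s$ and $\int_0^T|\Gamma_s|^2\,\mathrm{d}s$, which follow from the energy inequalities for the BMO martingales $\Pi*B$ and (via $\alpha<1$) $Z^1*B$. A final application of H\"older to separate these factors from $\sup_t|\delta Y_t|^{2p}$ produces the stated bound with a possibly larger, BMO-norm-dependent $q$.

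The main obstacle I anticipate is the bookkeeping of integrability exponents: every change of measure and every separation of a BMO-energy factor from $\delta Y$ or $\delta Z$ via H\"older inflates the exponent, and one must verify that a single finite $q$, depending only on $T$, $r$ and the BMO norms of $Z^i*B$, simultaneously closes both the $Y$ and $Z$ estimates. The delicate point is the quadratic term: because $|\Pi|$ grows linearly in $|Z^1| + |Z^2|$, controlling $\int_0^T|\Pi_s|^2\,\mathrm{d}s$ in $L^m$ for every $m$ rests essentially on the reverse H\"older / John--Nirenberg inequalities for BMO martingales, and it is the interplay of these with the exponential weight $e$ — whose moments are finite only thanks to $\alpha<1$ — that drives the whole argument.
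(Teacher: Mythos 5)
Your proposal is correct and follows essentially the same route as the paper's proof in Appendix \ref{proofs}: the same linearisation through the slope processes $\Gamma$ and $\Pi$, the Girsanov change of measure justified by the BMO property of $\Pi*B$ with its reverse H\"older exponent $r$, an exponential weight whose moments are finite by (P4) of Lemma \ref{lem 2.1} precisely because $\alpha<1$, and the Doob/BDG/H\"older bookkeeping that closes both estimates with a single enlarged exponent $q$ depending only on $T$, $r$ and the BMO norms. The only cosmetic difference is that the paper applies It\^o's formula to the weighted square $e_t(\delta Y_t)^2$ with $e_t=\exp\big(2\int_0^t|\Gamma_s|\,\mathrm{d}s\big)$ and extracts both the $Y$- and $Z$-estimates from the resulting single inequality under $\mathbb{Q}$, whereas you weight $\delta Y$ linearly for the $Y$-part and run the $Z$-part under $\mathbb{P}$, absorbing the $\Pi\cdot\delta Z$ cross term by Young's inequality together with the energy-inequality moments of $\int_0^T|\Pi_s|^2\,\mathrm{d}s$ --- an equivalent standard variant.
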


We also deduce the following stability result.
\begin{cor}\label{stab}
Let $\{\xi_n\}_{n\in \mathbb{N}}$ be a sequence of bounded $\mathfrak{F}_T\text{-adapted}$ random variables that converges $\mathbb{P}\text{-a.s.}$ to $\xi$. Let $(g_n)_{n\in \mathbb{N}}$ be a sequence of drivers satisfying Assumption \ref{assum2} with the same constant $\Lambda$ and the same function $(f)$  , such that for $\mathrm{d}t\otimes \mathrm{d}\mathbb{P}\text{-a.s.}$ $(t,\omega) \in [0,T]\times \Omega$
$(g_n)_{n\in \mathbb{N}}(t,\omega,y,z)$ converges to $(g)(t,\omega,y,z)$ locally uniformly in $(y,z)\in \mathbb{R}\times \mathbb{R}^d$. Let  $(Y^n,Z^n) \in \mathcal{S}^{\infty}(\mathbb{R})\times \mathcal{H}^{2}(\mathbb{R}^d)$ be the solution to the BSDE \eqref{eqmain1} with parameters $(\xi_n,g_n)$. Then the BSDE \eqref{eqmain1} has a unique solution $(Y,Z) \in \mathcal{S}^{\infty}(\mathbb{R})\times \mathcal{H}^{2}(\mathbb{R}^d)$ such that $\mathbb{P}\text{-a.s.}$ $Y^n_t$ converges to $Y_t$ uniformly in $t \in [0,T]$ and $Z^n$ converges to $Z$ in $\mathcal{H}^{2}(\mathbb{R}^d)$  
\end{cor}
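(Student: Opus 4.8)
The plan is to produce the candidate limit $(Y,Z)$ directly from Theorem \ref{th1}, and only afterwards to quantify the convergence through the a priori estimate of Lemma \ref{thap}, rather than first extracting a limit from the sequence $(Y^n,Z^n)$. First I would note that, since the $\xi_n$ are bounded with $M:=\sup_n\|\xi_n\|_{L^{\infty}}<\infty$ and converge $\mathbb{P}$-a.s. to $\xi$, one has $|\xi|\leq M$ a.s., so $\xi$ satisfies Assumption \ref{assum1}. Next I would verify that the locally uniform limit $g$ again satisfies Assumption \ref{assum2} with the \emph{same} constants $\Lambda_0,\Lambda_y,\Lambda_z$ and the same $f$: continuity in $(y,z)$ is inherited from locally uniform convergence, predictability from the pointwise limit of predictable maps, and both $\|g(t,0,0)\|_{L^{\infty}}\leq\Lambda_0$ and the stochastic-Lipschitz inequality pass to the limit because they hold for every $g_n$ with $n$-independent constants. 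Hence Theorem \ref{th1} yields a \emph{unique} solution $(Y,Z)\in\mathcal{S}^{\infty}\times\mathcal{H}_{\text{BMO}}\subset\mathcal{S}^{\infty}\times\mathcal{H}^2$ to the $(\xi,g)$-equation, which already settles the existence and uniqueness claims.

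For the convergence, the uniform bounds \eqref{bound1}--\eqref{bound2} give $\sup_n\|Y^n\|_{\mathcal{S}^{\infty}}\leq\Upsilon^{(1)}$ and $\sup_n\|Z^n*B\|_{\text{BMO}}\leq\Upsilon^{(2)}$ computed from $M$ and the common $\Lambda$'s, $f$, $T$; the limit $(Y,Z)$ obeys the same bounds. Consequently the exponent $q$ and the constant $C$ in Lemma \ref{thap} may be taken independent of $n$ when comparing $(Y^n,Z^n)$ (parameters $(\xi_n,g_n)$) with $(Y,Z)$ (parameters $(\xi,g)$). Applying the lemma with the $n$-th solution as ``$1$'' and the limit as ``$2$'' gives, for fixed $p>1$,
\begin{align*}
\|Y^n - Y\|_{\mathcal{S}^{2p}}^{2p} + \|Z^n - Z\|_{\mathcal{H}^{2p}}^{2p}
\leq C\,\mathbb{E}\Big[\,|\xi_n - \xi|^{2pq} + \Big(\int_0^T |g_n(s,Y_s,Z_s) - g(s,Y_s,Z_s)|\,\mathrm{d}s\Big)^{2pq}\,\Big]^{1/q}.
\end{align*}
The decisive feature is that the driver difference is evaluated at the \emph{fixed} processes $(Y,Z)$, so its integrand does not depend on $n$.

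It remains to send $n\to\infty$ on the right-hand side, and this is where the main work lies. The terminal term vanishes by dominated convergence, since $\xi_n\to\xi$ a.s. and $|\xi_n-\xi|\leq 2M$. For the driver term I would use dominated convergence twice. Pointwise in $(s,\omega)$ one has $g_n(s,Y_s,Z_s)\to g(s,Y_s,Z_s)$, because $g_n\to g$ locally uniformly and $(Y_s(\omega),Z_s(\omega))$ is a fixed argument; the growth bound \eqref{qg1}, valid for all $g_n$ and $g$ with common constants, supplies the $n$-independent domination
\begin{align*}
|g_n(s,Y_s,Z_s)-g(s,Y_s,Z_s)| \leq 2\big(\Lambda_0 + \Lambda_y \|Y\|_{\mathcal{S}^{\infty}} + \Lambda_z |Z_s| + \Lambda_z \varphi(\|Y\|_{\mathcal{S}^{\infty}})|Z_s|^2\big)=:h_s .
\end{align*}
Since $Z*B\in\mathcal{H}_{\text{BMO}}$, the energy inequality for BMO martingales makes $\int_0^T|Z_s|^2\,\mathrm{d}s$ have moments of all orders, whence $\int_0^T h_s\,\mathrm{d}s\in L^{2pq}(\Omega)$. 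A first application of dominated convergence in $s$ (for a.e. $\omega$) gives $\int_0^T|g_n-g|(s,Y_s,Z_s)\,\mathrm{d}s\to 0$ a.s., and a second in $\omega$, dominated by $(\int_0^T h_s\,\mathrm{d}s)^{2pq}\in L^1$, sends the whole driver term to $0$. Thus $\|Y^n-Y\|_{\mathcal{S}^{2p}}\to 0$ and $\|Z^n-Z\|_{\mathcal{H}^{2p}}\to 0$.

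Finally, $\mathcal{S}^{2p}$-convergence of $Y^n$ forces $\sup_t|Y^n_t-Y_t|\to 0$ in $L^{2p}$, hence in probability, so along a subsequence $Y^n\to Y$ uniformly in $t$, $\mathbb{P}$-a.s.; and since $\mathbb{P}$ is a probability measure, Jensen's inequality yields $\|Z^n-Z\|_{\mathcal{H}^2}\leq\|Z^n-Z\|_{\mathcal{H}^{2p}}\to 0$, which is the asserted $\mathcal{H}^2$-convergence. I expect the main obstacle to be the control of the \emph{quadratic} driver term: the argument works precisely because we compare each $(Y^n,Z^n)$ against the already-constructed limit rather than against one another, which freezes the driver arguments at $(Y,Z)$ and lets the BMO moment estimates furnish an $n$-independent dominating function; comparing $(Y^n,Z^n)$ with $(Y^m,Z^m)$ directly would leave the quadratic term evaluated at the $m$-dependent process $Z^m$ and block the dominated-convergence step.
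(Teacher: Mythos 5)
Your proof is correct and is essentially the argument the paper intends: the authors omit the proof entirely, stating only that it is ``straightforward from Lemma \ref{thap}'', and your derivation is precisely that derivation. You also isolate the genuinely important point, namely that Lemma \ref{thap} evaluates the driver difference at the \emph{second} solution, so that comparing each $(Y^n,Z^n)$ against the already-constructed limit $(Y,Z)$ (obtained by first checking that $(\xi,g)$ inherit Assumptions \ref{assum1}--\ref{assum2} with the same constants and invoking Theorem \ref{th1}) freezes the quadratic term at the fixed process $Z$, whose BMO property supplies an $n$-independent dominating function via the energy inequality; comparing $(Y^n,Z^n)$ with $(Y^m,Z^m)$ would indeed block the dominated-convergence step, as you note.

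Two caveats are worth recording. First, you read ``bounded'' as $\sup_n\|\xi_n\|_{L^\infty}<\infty$; this uniform bound is genuinely needed, both to make $\Upsilon^{(1)},\Upsilon^{(2)}$ and hence the constants $C,q$ of Lemma \ref{thap} independent of $n$, and to dominate $|\xi_n-\xi|^{2pq}$ — it is the only reading under which the corollary can hold, so your assumption is legitimate but should be made explicit. Second, your argument delivers $\|Y^n-Y\|_{\mathcal{S}^{2p}}\to 0$, which gives uniform convergence in probability and a.s.\ uniform convergence only along a subsequence, whereas the corollary literally asserts $\mathbb{P}$-a.s.\ uniform convergence of the full sequence; absent any rate for $\xi_n\to\xi$ or $g_n\to g$ there is no Borel--Cantelli upgrade, so this residual discrepancy lies in the corollary's phrasing rather than in your proof, and you state the subsequence honestly. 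The remaining steps (predictability and continuity of the limit driver, the bound $f\le\varphi$ with $\varphi$ increasing, Jensen for the $\mathcal{H}^2$ claim) are all sound.
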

The proof of Corollary \ref{stab} is straightforward from Lemma \ref{thap}, again we do not provide it here. 

In the subsequent Lemma, we suggests a $L_p-L_{2p}$ type a-priori estimates of solutions to the BSDE \eqref{eqmain1} when the coefficients satisfy Assumptions \ref{assum1}--\ref{assum2}. It can be viewed as a refine version of Lemma \ref{thap} and an extension of Lemma 5.26 in \cite{GeissYlinen} for more general type of quadratic BSDEs with stochastic Lipschitz coefficients.
\begin{lemm}\label{thap2}
Let $(Y^{i}, Z^{i}) \in \mathcal{S}^{\infty}\times \mathcal{H}_{\text{BMO}}$ be the solution to the BSDE\eqref{eqmain1}, with terminal value $\xi^i$ and generator $g^i, \,i\in \{1,2 \}$ satisfying Assumptions \ref{assum1} and \ref{assum2}. Then for $p > 1$, with $p > p_0$  ($p_0$ is such that $\mathcal{E}(Z*B) \in L^{p_0}$)
\begin{align*}
	\| Y^1 - Y^2 \|_{\mathcal{S}^{p}}^{p}  &\leq C\mathbb{E}\Big[ |\xi_1 -\xi_2|^{2p} + \Big(\int_{t}^{T}| g^1(s,Y^2_s,Z^2_s) - g^2(s,Y^2_s,Z^2_s)|\mathrm{d}s \Big)^{2p}  \Big]^{\frac{1}{2}},\\
	\|Z^1 - Z^2\|_{\mathcal{H}^{p}}^{p}  &\leq C\mathbb{E}\Big[ |\xi_1 -\xi_2|^{p} + \Big(\int_{t}^{T}| g^1(s,Y^2_s,Z^2_s) - g^2(s,Y^2_s,Z^2_s)|\mathrm{d}s \Big)^{p}   \Big]	+ 	\| Y^1 - Y^2 \|_{\mathcal{S}^{p}}^{p} .
\end{align*}
\end{lemm}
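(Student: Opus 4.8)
The plan is to reduce both estimates to a single linearised backward equation for the differences $\delta Y := Y^1 - Y^2$ and $\delta Z := Z^1 - Z^2$, exactly as in the uniqueness part of Theorem \ref{th1}, and then to exploit the BMO structure to convert the resulting $\mathbb{Q}$-conditional expectation into the data norm of $D:=|\delta\xi| + \int_0^T|\delta g_s|\,\mathrm{d}s$, where $\delta g_s := g^1(s,Y^2_s,Z^2_s) - g^2(s,Y^2_s,Z^2_s)$. Linearising the two drivers along $(Y^1,Y^2)$ and $(Z^1,Z^2)$ and introducing the adapted processes $\Gamma$ and $\Pi$ as in the proof of Theorem \ref{th1}, one finds that $\delta Y$ solves $\delta Y_t = \delta\xi + \int_t^T(\Gamma_s\delta Y_s + \langle\Pi_s,\delta Z_s\rangle + \delta g_s)\,\mathrm{d}s - \int_t^T\delta Z_s\,\mathrm{d}B_s$. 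From Assumption \ref{assum2} one has $|\Gamma_s|\le\Lambda_y(1+2|Z^1_s|^\alpha)$ and $|\Pi_s|\le\Lambda_z(1+2f(|Y^2_s|)(|Z^1_s|+|Z^2_s|))$; since $Y^2\in\mathcal{S}^\infty$ and $Z^1,Z^2\in\mathcal{H}_{\mathrm{BMO}}$, the process $\Pi$ lies in $\mathcal{H}_{\mathrm{BMO}}$ and $\Pi*B$ is a BMO martingale, while the sublinear growth $\alpha<1$ guarantees through Lemma \ref{lem 2.1} that $E:=\exp(\int_0^T|\Gamma_s|\,\mathrm{d}s)$ has finite moments of every order.

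The first step is the $\mathcal{S}^p$-estimate for $\delta Y$. Setting $e_t=\exp(\int_0^t\Gamma_s\,\mathrm{d}s)$ and passing to the measure $\mathbb{Q}$ with density $\mathcal{E}(\Pi*B)$ (a genuine change of measure since $\Pi*B\in\mathrm{BMO}$), Itô's formula applied to $e\,\delta Y$ under $\mathbb{Q}$ removes both $\Gamma$ and $\Pi$ and yields, as in \eqref{3.5}, the representation $e_t\delta Y_t = \mathbb{E}^{\mathbb{Q}}[e_T\delta\xi + \int_t^T e_s\delta g_s\,\mathrm{d}s\mid\mathfrak{F}_t]$. Bounding $e_s/e_t\le E$ for $t\le s\le T$ gives the pointwise estimate $|\delta Y_t|\le\mathbb{E}^{\mathbb{Q}}[E\,D\mid\mathfrak{F}_t]$. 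I then rewrite this as $\mathcal{E}_t^{-1}\mathbb{E}^{\mathbb{P}}[\mathcal{E}_T E D\mid\mathfrak{F}_t]$ with $\mathcal{E}_t=\mathcal{E}(\Pi*B)_t$, and combine the reverse Hölder inequality for $\mathcal{E}(\Pi*B)$ (valid with exponent $p_0$ because $\Pi*B$ is BMO) with conditional Hölder to obtain $\mathbb{E}^{\mathbb{Q}}[ED\mid\mathfrak{F}_t]\le C\,\mathbb{E}^{\mathbb{P}}[(ED)^{p_0'}\mid\mathfrak{F}_t]^{1/p_0'}$, where $p_0'$ is the conjugate exponent. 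Taking suprema and applying Doob's maximal inequality — for which the standing hypothesis $p>p_0$ provides the required exponent strictly above one — leads to $\mathbb{E}[\sup_t|\delta Y_t|^p]\le C\,\mathbb{E}[(ED)^p]$; a final Cauchy--Schwarz splits off the finite factor $\mathbb{E}[E^{2p}]^{1/2}$, absorbed into $C$, from $\mathbb{E}[D^{2p}]^{1/2}$, which is precisely the asserted bound for $\|\delta Y\|_{\mathcal{S}^p}^p$.

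The second step is the $\mathcal{H}^p$-estimate for $\delta Z$. Here I apply Itô's formula to $|\delta Y_t|^2$ under $\mathbb{P}$ and solve for the quadratic variation, obtaining $\int_0^T|\delta Z_s|^2\,\mathrm{d}s = |\delta\xi|^2-|\delta Y_0|^2 + 2\int_0^T\delta Y_s(\Gamma_s\delta Y_s+\langle\Pi_s,\delta Z_s\rangle+\delta g_s)\,\mathrm{d}s - 2\int_0^T\delta Y_s\delta Z_s\,\mathrm{d}B_s$. Raising to the power $p/2$, taking expectations, and applying the Burkholder--Davis--Gundy inequality to the stochastic integral, the terms quadratic in $\delta Z$ (those arising from $\langle\Pi,\delta Z\rangle$ and from BDG) are dominated by $\frac14\,\mathbb{E}[(\int_0^T|\delta Z|^2)^{p/2}]$ via Young's inequality and absorbed on the left. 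The remaining cross terms carry the BMO factors $\int_0^T|\Pi_s|^2\,\mathrm{d}s$ and $\int_0^T|\Gamma_s|\,\mathrm{d}s$, which by the energy inequality have all moments, so by Hölder they combine with $\sup_t|\delta Y_t|$ to produce a contribution bounded by $\|\delta Y\|_{\mathcal{S}^p}^p$, while the $\delta\xi$ and $\delta g$ terms survive as $C\,\mathbb{E}[|\delta\xi|^p + (\int_0^T|\delta g_s|\,\mathrm{d}s)^p]$. This gives exactly the claimed inequality for $\|\delta Z\|_{\mathcal{H}^p}^p$.

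The main obstacle I anticipate lies in the first step: achieving the clean $L^{2p}$-norm on the data with the sharp square-root exponent requires transferring the conditional expectation from $\mathbb{Q}$ back to $\mathbb{P}$ through a single application of reverse Hölder and then matching the maximal-function estimate to the range $p>p_0$. The delicate bookkeeping is tracking which moments of the exponential correctors $e^{\pm1}$ and of the density $\mathcal{E}(\Pi*B)^{\pm1}$ are actually finite — finiteness for $e$ resting on $\alpha<1$ through Lemma \ref{lem 2.1}, and for $\mathcal{E}$ on the BMO reverse Hölder exponent — since it is this that forces the constants to depend only on $T$, the structural constants, the $\mathcal{S}^\infty$-norms of the $Y^i$ and the BMO-norms of the $Z^i$.
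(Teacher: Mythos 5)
Your first estimate, for $\|\delta Y\|_{\mathcal{S}^p}$, is correct and follows the paper's proof essentially verbatim: linearisation with $\Gamma,\Pi$, the representation $e_t\delta Y_t=\mathbb{E}^{\mathbb{Q}}[e_T\delta\xi+\int_t^Te_s\delta g_s\,\mathrm{d}s\,|\,\mathfrak{F}_t]$, a single reverse H\"older step to return from $\mathbb{Q}$ to $\mathbb{P}$, Doob's maximal inequality (this is exactly where the paper also uses $p>p_0$), and a final Cauchy--Schwarz splitting off $\mathbb{E}[E^{2p}]^{1/2}$, whose finiteness rests, as you say, on $\alpha<1$ via (P4) of Lemma \ref{lem 2.1}. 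The only cosmetic discrepancy is the naming of the conditional-moment exponent ($p_0$ versus its conjugate), on which the paper itself is loose.

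The second estimate, however, has a genuine gap. After It\^o's formula and BDG, the BDG term is indeed absorbable exactly as you describe (Young gives $\frac14\mathbb{E}[(\int|\delta Z|^2)^{p/2}]+C\,\mathbb{E}[\sup_t|\delta Y_t|^p]$, with the correct exponent $p$). But the cross terms $\int\delta Y_s\langle\Pi_s,\delta Z_s\rangle\,\mathrm{d}s$ and $\int\Gamma_s|\delta Y_s|^2\,\mathrm{d}s$ cannot be handled by ``energy inequality gives all moments, then H\"older.'' After Young, the $\Pi$-term leaves you with $\mathbb{E}\big[\sup_t|\delta Y_t|^p\,(\int_0^T|\Pi_s|^2\mathrm{d}s)^{p/2}\big]$, and since $\int_0^T|\Pi_s|^2\mathrm{d}s$ is \emph{not} almost surely bounded (BMO gives all moments, not an $L^\infty$ bound), any H\"older split with finite conjugate exponents produces $\|\delta Y\|_{\mathcal{S}^{p\lambda}}^p$ for some $\lambda>1$ rather than $\|\delta Y\|_{\mathcal{S}^{p}}^p$ --- so you do not obtain the stated inequality, in which the same exponent $p$ appears on the right. (At best you could close a weaker variant by feeding the first estimate back in at level $p\lambda$, but then the data norm degrades to $L^{2p\lambda}$.) This is precisely why the paper's proof invokes Fefferman's inequality (Lemma \ref{feffer}): with $\mu_s=|\delta Y_s\,\delta Z_s|$ (resp.\ $\mu_s=|\delta Y_s|^2$) and $\nu_s=\varphi_s$ (resp.\ $\theta_s$), one gets $\big\|\int_0^T|\mu_s\nu_s|\mathrm{d}s\big\|_{p/2}\leq c(p)\|\mu\|_{\mathcal{H}^{p/2}}\|\nu\|_{\mathcal{H}_{\text{BMO}}}$, so the BMO norms of $\theta,\varphi$ enter only as constants and the exponent $p$ is preserved; the remaining $\|S_t(Z)\|_p$ contribution is then absorbed at the level of norms by the choice $\varepsilon=\kappa^2$. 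To repair your argument you must replace the H\"older step by Fefferman's inequality (or an equivalent John--Nirenberg/stopping-time slicing argument); as written, the second half of your proposal does not prove the lemma.
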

\begin{proof}
	See Appendix \ref{proofs}.
\end{proof}
\section{Differentiability of parametrised quadratic BSDEs}\label{sectdiff}
In this section we discuss the both the Malliavin and the classical differentiablility of the solutions to the BSDEs \eqref{eqmain1}. We provide below, sufficient conditions for these differentiabilities  to hold. 

\subsection{Malliavin Differentiability}\label{Maldif}
In this subsection we show under some weak conditions on the generator that the solution $(Y,Z)$ to the BSDE \eqref{eqmain1} is Malliavin differentiable and the Malliavin derivatives $(D_uY_t,D_uZ_t)_{u,t\in [0,T]}$ are given as solution to a BSDE. In addition to Assumptions \ref{assum1}-\ref{assum2}, we will suppose the following additional assumptions
\begin{assum}\label{assum5}\leavevmode
\begin{itemize}
	\item[(M1)] 
	The random variable $\xi$ belongs to $\mathbb{D}^{1,\infty}.$ The function $g:[0,T]\times\Omega\times \mathbb{R}\times \mathbb{R}^d \mapsto \mathbb{R}$ is adapted, measurable and continuously differentiable in $(y,z)$.
\item[(M2)] For each $(y,z) \in \mathbb{R}\times \mathbb{R}^d,$ it holds that  $(g(t,y,z))_{t \in [0,T]} \in \mathbb{L}_{1,2p}(\mathbb{R})$ for all $p \geq 1$. Its Malliavin derivative denoted by $(D_ug(t,y,z))_{u,t \in [0,T]}$ satisfies
\begin{align*}
	|D_u g(t,y,z)| \leq K_u(t)(1 + |y| + f(|y|) |z|^{\alpha}) + \tilde{K}_u(t)( 1+ |z|^{\alpha} +f(|y|)|z|) \text{ a.s.}
\end{align*}
for any $(u,t,y,z) \in [0,T]\times [0,T]\times \mathbb{R}\times \mathbb{R}^d,$ $\alpha\in (0,1)$. Here $(K_u(t))_{u,t \in [0,T]}$ and $(\tilde K_u(t))_{u,t \in [0,T]}$ are two positive adapted processes such that  for all $p\geq 1$
\begin{align*}
	\sup_{0\leq t \leq T}	\int_{0}^{T} \mathbb{E} | K_u(t)|^{2p} \mathrm{d}u+ \| \tilde K_u(t)\|^{2p}_{\mathcal{S}^{2p}} < \infty.
\end{align*}

\end{itemize}
\end{assum}
Below, we state the main result in this subsection:
\begin{thm}\label{Main2}
Suppose $g$ and $\xi$ satisfy Assumptions \ref{assum1}--\ref{assum2}, and \ref{assum5}. Then the solution process $(Y,Z)$ to the BSDE \eqref{eqmain1} is in $\mathbb{L}_{1,2} \times (\mathbb{L}_{1,2})^d$ and a version of $(D_uY_t,D_uZ_t)_{u,t\in [0,T]}$ is the unique solution to
\begin{align}\label{MalY}
D_u Y_t &= 0 \text{ and } D_u Z_t = 0, \text{ if } t\in [0,u),\nonumber\\
D_u Y_t & = D_u\xi -\int_{t}^{T} D_u Z_s\mathrm{d}B_s\nonumber\\
& \qquad + \int_{t}^{T} \left[ (D_ug)(s,\Theta_s) + \langle (\nabla g)(s,\Theta_s), D_u\Theta_s  \rangle   \right]\mathrm{d}s, \text{ if } t\in [u,T].
\end{align}
Moreover, $\{ D_tY_t: 0\leq t \leq T \}$ is a version of $\{ Z_t: 0\leq t \leq T\}.$
\end{thm}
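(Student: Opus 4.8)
The plan is to realise $(Y,Z)$ as a limit of solutions to truncated BSDEs to which the Lipschitz Malliavin theory of Theorem \ref{Maldiff} applies, and then to pass to the limit in the linear BSDE characterising the Malliavin derivatives. Concretely, I would introduce the truncated generators $g_n(t,y,z) := g(t,y,\rho_n(z))$ with $\rho_n$ the smooth cut-off appearing in \eqref{illu1}, and — exploiting that the a priori bound \eqref{bound1} on $\|Y^n\|_{\mathcal{S}^{\infty}}$ is uniform in $n$ and independent of the quadratic structure — further replace $f(|y|)$ by its bounded localisation $f(|y|\wedge \Upsilon^{(1)})$, which does not alter the solution. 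For each fixed $n$ the resulting driver is then globally Lipschitz in $(y,z)$ with bounded first derivatives and, by Assumption \ref{assum5}(M2), satisfies the Malliavin hypotheses of Theorem \ref{Maldiff}; hence $(Y^n,Z^n)\in \mathbb{L}_{1,2}\times(\mathbb{L}_{1,2})^d$, the pair $(D_uY^n,D_uZ^n)$ solves the explicit linear BSDE, and the diagonal identity $D_tY^n_t = Z^n_t$ holds. Corollary \ref{stab} already supplies $Y^n\to Y$ uniformly and $Z^n\to Z$ in $\mathcal{H}^2$.

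The heart of the argument is a uniform-in-$n$ estimate
\[
\sup_{n}\int_0^T \mathbb{E}\Big[\sup_{t\in[0,T]}|D_uY^n_t|^2 + \int_0^T|D_uZ^n_t|^2\diff t\Big]\diff u < \infty.
\]
To obtain it I would read the BSDE for $(D_uY^n,D_uZ^n)$ as a linear equation with terminal datum $D_u\xi$, stochastic-Lipschitz coefficients arising from $(\nabla g)(s,\Theta^n_s)$, and inhomogeneity $(D_ug)(s,\Theta^n_s)$. The decisive point is that $\partial_z g(s,\Theta^n_s)$ is dominated by $\Lambda_z\big(1 + 2f(|Y^n_s|)|Z^n_s|\big)$, whose stochastic integral lies in $\mathrm{BMO}$ uniformly in $n$ by \eqref{bound1}--\eqref{bound2} and the finiteness of $\varphi$ on $[0,\Upsilon^{(1)}]$, while $\partial_y g$ is controlled through the $|z|^{\alpha}$ term. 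Consequently the a priori bounds of Lemma \ref{thap0} and the refined $L_p$--$L_{2p}$ estimate of Lemma \ref{thap2} apply with constants independent of $n$, and the inhomogeneity is integrable by the growth bound (M2) together with the uniform moment estimates for $Y^n$ and $Z^n$. These bounds, combined with the progressive measurability of the derivatives, also give membership in the process spaces $\mathbb{L}_{1,2}$.

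With these estimates, Lemma \ref{lemma 1.2.3} yields $Y\in \mathbb{D}^{1,2}$ and $Z\in(\mathbb{D}^{1,2})^d$ with $DY^n\rightharpoonup DY$ weakly in $L^2(\Omega\times[0,T])$ for a.e.\ $u$. To identify the limit as the solution of \eqref{MalY} I would upgrade this to strong convergence: applying Lemma \ref{thap2} to the \emph{difference} of the linear BSDEs for two indices, and using that $\nabla g$ and $D_ug$ are continuous in $(y,z)$ and converge along $\Theta^n\to\Theta$, shows that $(D_uY^n,D_uZ^n)$ is Cauchy in the relevant $\mathcal{S}^p\times\mathcal{H}^p$ norms; the limit then solves \eqref{MalY} by passing to the limit term by term. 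Finally, the diagonal identity $D_tY_t = Z_t$ is inherited from $D_tY^n_t = Z^n_t$ once one checks, via the joint continuity of $(u,t)\mapsto D_uY_t$ and Zhang's path regularity, that both sides converge in $\mathcal{H}^2$.

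I expect the main obstacle to be the uniform estimate of the second paragraph. Because the generator is only quadratic and stochastic Lipschitz, the linearised coefficients $(\nabla g)(s,\Theta^n_s)$ are not bounded but merely $\mathrm{BMO}$, so every bound must be carried out within the BMO/energy-inequality framework rather than by naive Gronwall arguments; moreover the inhomogeneity $(D_ug)(s,\Theta^n_s)$ itself grows like $f(|Y^n|)|Z^n|$, and its uniform control forces one to combine the growth bound (M2), the uniform BMO bound \eqref{bound2}, and Hölder's inequality carefully across the admissible energy exponents so that the implicit constant $q$ in Lemmas \ref{thap0} and \ref{thap2} can be chosen uniformly in $n$.
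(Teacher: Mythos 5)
Your overall architecture coincides with the paper's: truncate the driver so that Theorem \ref{Maldiff} applies, get uniform bounds for $(D_uY^n,D_uZ^n)$ from Lemma \ref{thap0} with constants controlled by the uniform BMO bound \eqref{bound2}, and conclude via the compactness criterion of Lemma \ref{lemma 1.2.3}. But your final identification step contains a genuine gap. You propose to upgrade the weak convergence to strong convergence by applying Lemma \ref{thap2} to the difference of the linear BSDEs for two indices $n,m$; that stability estimate bounds the difference of solutions by the difference of drivers evaluated along one of them, which here contains the term $(D_ug_n)(s,\Theta^n_s)-(D_ug_m)(s,\Theta^m_s)$. Assumption (M2) provides only a \emph{growth} bound on $D_ug(t,y,z)$ — no continuity of the map $(y,z)\mapsto D_ug(t,y,z)$ is assumed — so there is no way to show this inhomogeneity difference is small, even along $\Theta^n\to\Theta$, and the Cauchy argument breaks down. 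The paper avoids exactly this obstruction by never leaving the weak topology: it passes to the limit in \eqref{MalY^n} term by term, testing the $\nabla g_n\cdot D_u\Theta^n$ terms against bounded random variables (using continuity of $\nabla g$, uniform convergence of $Y^n$, and $\mathcal{H}^2$-convergence of $Z^n$), and it handles the $D_ug_n(s,\Theta^n_s)$ term by a second application of Lemma \ref{lemma 1.2.3} to the random variables $g_n(s,\Theta^n_s)$, for which strong $L^2$-convergence does hold and whose full Malliavin derivatives are uniformly $L^2$-bounded; the weak limit of $(D_ug_n)(s,\Theta^n_s)$ is then identified by subtraction. You must either add a continuity hypothesis on $D_ug$ in $(y,z)$ (strictly stronger than the theorem's assumptions) or revert to the weak-limit scheme.

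Two secondary points. First, your truncation $g_n(t,y,z)=g(t,y,\rho_n(z))$ together with "replacing $f(|y|)$ by $f(|y|\wedge\Upsilon^{(1)})$" is not literally implementable: $f$ appears only in the \emph{assumptions} on $g$, not as a constituent of $g$, so you cannot localise it inside the equation. The correct device — and the one the paper emphasises as the non-standard ingredient here — is to truncate the $y$-argument of $g$ itself, i.e.\ to use $g_n(t,y,z)=g(t,\tilde{\rho}_n(y),\rho_n(z))$; only then are $\nabla_yg_n$ and $\nabla_zg_n$ uniformly bounded for fixed $n$, since $f(|\tilde{\rho}_n(y)|)$ is bounded, while the bound \eqref{bound1} guarantees the truncation in $y$ is eventually inactive. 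Your idea can be repaired this way, at which point it matches the paper. Second, invoking Zhang's path regularity for the diagonal identity $D_tY_t=Z_t$ is circular in this abstract setting: that result is established later in the paper only for the Markovian FBSDE with H\"older drift, and is not available for the general BSDE \eqref{eqmain1}; the identity is instead inherited from the truncated level ($D_tY^n_t$ is a version of $Z^n_t$ by Theorem \ref{Maldiff}) together with the $\mathcal{H}^2$-convergence of $Z^n$ and the weak convergence of $DY^n$.
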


The strategy to prove the above theorem is also well known and analogous to \cite{ArImDR, ImkDosReis, ImkDos, DosReis}. It is performed in two main steps. We first build a family of truncated BSDEs that approximate the BSDE \eqref{eqmain1} and then, prove some uniform bounds for solutions to the truncated BSDEs. Second we apply  a  compactness result (Lemma \ref{lemma 1.2.3}) to derive the desired result.

\subsubsection{Family of truncated generators}:
Let us consider the family $(\tilde{\rho}_n)_{n\in \mathbb{N}}$ of smooth (continuously differentiable) real valued functions,  that truncated the identity on the real line. We use this family of functions to truncate the variables $y$ and $z$ simultaneously in the driver $g(\cdot,y,z),$ for $(y,z) \in \mathbb{R}\times\mathbb{R}^d$. If the procedure is well known (see for example \cite{ArImDR, ImkDos, DosReis}), it is worth mentioning that truncating the two variables at the same time is not so common in the literature. This approach is motivated by the  form of the driver which is not uniformly Lipschitz in $y$. 

The family of functions $(\tilde{\rho}_n)_{n\in \mathbb{N}}$ are such that:
\begin{itemize}
\item[(a)] $(\tilde{\rho}_n)_{n\in \mathbb{N}}$ converges uniformly to the identity. For all $n \in \mathbb{N}$ and $x \in \mathbb{R}$ 
\begin{equation}\label{trunc}
\tilde{\rho}_n(x) =	\begin{cases}
	n+1,& \quad x > n+2,\\
	x,& \quad |x|\leq n,\\
	-(n+1),& \quad x < -(n+2).
\end{cases}
\end{equation}
In addition $|\tilde{\rho}_n(x)| \leq |x|$ and $ |\tilde{\rho}_n(x)| \leq n+1 $.
\item[(b)] The derivative $\nabla \tilde{\rho}_n$ is absolutely uniformly bounded by 1, and converges to 1 locally uniformly.
\end{itemize}

Let $(g_n)_{n\in \mathbb{N}}$ be the sequence defined by
\begin{equation}\label{eqmain3-} 
g_n(t,y,z) := g(t,\tilde{\rho}_n(y),\rho_n(z)) \text{ for } (\omega,t,y,z) \in \Omega\times [0,T]\times \mathbb{R}\times \mathbb{R}^d,\,\, n\in \mathbb{N}, 
\end{equation}
where $\rho_n: \mathbb{R}^d \rightarrow \mathbb{R}^d, z\mapsto \rho_n(z) = (\tilde{\rho}_n(z_1),\ldots,\tilde{\rho}_n(z_d)), n\in \mathbb{N}.$

Let us consider the following sequence $(Y^n,Z^n)_{n\geq 1}$ satisfying the BSDE 
\begin{equation}\label{eqmain3}
Y_t^n = \xi + \int_{t}^{T} g_n(s, Y_s^n, Z_s^n)\mathrm{d}s - \int_{t}^{T} Z_s^n \mathrm{d}B_s, \qquad t\in [0,T], n\in \mathbb{N}.
\end{equation}
Fix $n \in \mathbb{N}$. Using  (M1), the family $(g_n)_{n \in \mathbb{N}}$ satisfies: 
for  $(t,y,z)\in [0,T]\times \mathbb{R}\times \mathbb{R}^d$
\begin{align*} 
|\nabla_y g_n(t,y,z)| + |\nabla_z g_n(t,y,z)| \leq \Lambda_y ( 1 + (n+1)^{\alpha} )+ \Lambda_z ( 1 + (n+1)f(n+1)).
\end{align*}
In addition, using the mean value theorem, we obtain for all $t \in [0,T],$ $y,y' \in \mathbb{R},$ and $z,z' \in \mathbb{R}^d$ 
$|g_n(t,y,z) - g_n(t,y',z')|\leq \Lambda_n (|y-y'| + |z-z'|)$. Thus for each $n \in \mathbb{N},$ the family of functions $(g_n)_{n \in \mathbb{N}}$ is Lipschitz continuous in its spatial variables.


\subsubsection{Uniform bounds for solutions to BSDE \eqref{eqmain3} } Here, we will prove that the solution $(Y^n,Z^n)$ to the BSDE \eqref{eqmain3} is Malliavin differentiable with uniformly bounded $\mathbb{L}_{1,2} \times (\mathbb{L}_{1,2})^d\text{-norm}$. Recall that $\mathbb{L}_{1,2}$ stands for the set of progressively measurable processes $(\eta_t)_{0\leq t\leq T}$ which are Malliavin differentiable, with $(D_s\eta_t)_{s\leq t\leq T}$ having a progressively measurable version  and such that $\|\eta\|_{1,2}^2:= \mathbb{E}\Big[ \int_{0}^{T} |\eta_t|\mathrm{d}t + \int_{0}^{T}\int_{0}^{T} |D_s\eta_t|^2\mathrm{d}s\mathrm{d}t   \Big]<+\infty$.

We first establish the next Lemma, which provides the uniform bounds of solutions to the BSDE \eqref{eqmain3} in the Banach space $\mathcal{S}^{\infty}(\mathbb{R})\times \mathcal{H}^2(\mathbb{R}^d)$. 
\begin{lemm}\label{lemma 5.1}
For each $n \in \mathbb{N}$, the BSDE \eqref{eqmain3} admits a unique solution $(Y^n,Z^n)$ which is uniformly bounded in $\mathcal{S}^{\infty}(\mathbb{R})\times \mathcal{H}^2(\mathbb{R}^d)$. In addition, the process $Z^n \in \mathcal{H}_{BMO}$, and $\sup_{n\in \mathbb{N}}\|\mathcal{E}(Z^n* B) \|_{BMO} \leq \Upsilon^{(2)},$ where $\Upsilon^{(2)}$ is given in Theorem \ref{th1}. Furthermore, there exists $r > 1$ independent of $n$ such that $\sup_{n\in \mathbb{N}}\|\mathcal{E}(Z^n* B) \|_{ L^{r} } < \infty.$
\end{lemm}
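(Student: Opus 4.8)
The plan is to obtain existence and uniqueness for each fixed $n$ from the Lipschitz structure of $g_n$, and then to read off the $n$-independent bounds directly from the a priori estimates \eqref{bound1}--\eqref{bound2}, exploiting that the constants $\Upsilon^{(1)},\Upsilon^{(2)}$ there depend only on quantities left untouched by the truncation. The decisive first step is to verify that each truncated generator $g_n(t,y,z)=g(t,\tilde{\rho}_n(y),\rho_n(z))$ satisfies Assumption \ref{assum2} with the \emph{same} constants $\Lambda_0,\Lambda_y,\Lambda_z$, uniformly in $n$. Since $\tilde{\rho}_n$ and $\rho_n$ are $1$-Lipschitz with $|\tilde{\rho}_n(y)|\le|y|$ and $|\rho_n(z)|\le|z|$, and since $f(|\tilde{\rho}_n(y)|)\le\varphi(|\tilde{\rho}_n(y)|)\le\varphi(|y|)$ by monotonicity of $\varphi$, the incremental inequality of Assumption \ref{assum2} and the bound $\|g_n(t,0,0)\|_{L^{\infty}}\le\Lambda_0$ pass to $g_n$ with these unchanged constants (the dominating function $f$ being replaced, if necessary, by the $n$-independent $\varphi$). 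As already recorded above, $g_n$ is moreover globally Lipschitz in $(y,z)$, so Theorem \ref{th1} applies and furnishes, for each $n$, a unique solution $(Y^n,Z^n)\in\mathcal{S}^{\infty}(\mathbb{R})\times\mathcal{H}_{\text{BMO}}$.

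Next I would apply the a priori bounds \eqref{bound1}--\eqref{bound2} to $(Y^n,Z^n)$. Because $\Upsilon^{(1)}=(\|\xi\|_{L^{\infty}}+\Lambda_0 T)e^{\Lambda_y T}$ and $\Upsilon^{(2)}$ are assembled only from $\|\xi\|_{L^{\infty}}$, $T$, the constants $\Lambda_0,\Lambda_y,\Lambda_z$ and the norm $\|1+\Lambda_z\varphi\|_{L^1[0,\Upsilon^{(1)}]}$ — all independent of $n$ by the previous step — these estimates hold with one and the same $\Upsilon^{(1)},\Upsilon^{(2)}$ for every $n$. This yields $\sup_n\|Y^n\|_{\mathcal{S}^{\infty}}\le\Upsilon^{(1)}$ and $\sup_n\|Z^n* B\|_{\text{BMO}}\le\Upsilon^{(2)}$; since $\|Z^n\|_{\mathcal{H}^2}\le\|Z^n* B\|_{\text{BMO}}$, the pair $(Y^n,Z^n)$ is in particular uniformly bounded in $\mathcal{S}^{\infty}(\mathbb{R})\times\mathcal{H}^2(\mathbb{R}^d)$.

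For the final assertion I would invoke the reverse Hölder (energy) inequality for BMO martingales: there is a nondecreasing function $\Phi$ such that any martingale $M$ with $\|M\|_{\text{BMO}}\le\Upsilon^{(2)}$ admits an exponent $r=r(\Upsilon^{(2)})>1$ with $\|\mathcal{E}(M)\|_{L^{r}}\le\Phi(\Upsilon^{(2)})$, both depending only on the BMO bound. Taking $M=Z^n* B$ and using the uniform bound from the previous step fixes a single $r>1$, independent of $n$, with $\sup_n\|\mathcal{E}(Z^n* B)\|_{L^{r}}<\infty$.

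I expect the main obstacle to be precisely the uniformity claimed in the first step: one must check that truncating \emph{both} arguments of $g$ simultaneously does not deteriorate the structural constants of Assumption \ref{assum2}, for it is this $n$-independence that disconnects the a priori bounds $\Upsilon^{(1)},\Upsilon^{(2)}$ from the Lipschitz constant $\Lambda_n$, which blows up as $n\to\infty$. Once this is in place, the remaining steps are direct applications of Theorem \ref{th1}, the estimates \eqref{bound1}--\eqref{bound2}, and the classical $L^{r}$-integrability of exponentials of BMO martingales.
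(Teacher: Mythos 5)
Your proposal is correct and follows essentially the same route as the paper: verify that $g_n$ satisfies Assumption \ref{assum2} with $n$-independent constants (and the growth bound \eqref{qg1}) via the $1$-Lipschitz, contracting properties of $\tilde{\rho}_n,\rho_n$, invoke Theorem \ref{th1} together with the bounds \eqref{bound1}--\eqref{bound2} for uniform estimates, and conclude the uniform $L^r$-integrability from the reverse H\"older property (P2) of Lemma \ref{lem 2.1}. Your substitution of the increasing majorant $\varphi$ for $f$ is a slightly more careful handling of the monotonicity issue (the paper uses $f(|\tilde{\rho}_n(y)|)\le f(|y|)$ directly), at the harmless cost of replacing $f$ by $\varphi$ in the constant $\Upsilon^{(2)}$.
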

\begin{proof}
The existence and uniqueness of solution to BSDE \eqref{eqmain3} follow directly from standard results in the theory of BSDEs, since the generator $g_n$ satisfies for each $n \in \mathbb{N}$ a Lipschitz condition in the space variables.
Let us also remark that the function $g_n$ satisfies uniformly Assumption \ref{assum2}. Indeed, for all $(t,y,z), (t,y',z') \in [0,T] \times \mathbb{R}\times \mathbb{R}^d,$ we obtain by using the properties satisfied by $\bar{\rho}_n$ and the function $f$:
\begin{align*}
&|g_n(t,y,z)-g_n(t,y',z')| \\
&= |g(t,\bar{\rho}_n(y), \rho_n(z)) - g(t,\bar{\rho}_n(y'), \rho_n(z'))|\\
&\leq \Lambda_y \left( 1+ |\rho_n(z)|^{\alpha} + |\rho_n(z')|^{\alpha} \right)|\bar{\rho}_n(y)-\bar{\rho}_n(y')| \\
&\quad+ \Lambda_z \left( 1 + (f(|\bar{\rho}_n(y)|) + f(|\bar{\rho}_n(y')|))(|\rho_n(z)| + |\rho_n(z)|)  \right)|\rho_n(z) - \rho_n(z')|\\
&\leq \Lambda_y (1+ |z|^{\alpha} + |z'|^{\alpha}) |y-y'| + \Lambda_z (1 + (f(|y|) +f(|y'|)(|z|+|z'|))|z-z'|
\end{align*} 

Moreover, a similar condition to \eqref{qg1} is also obtained almost surely:
\begin{align*} 
|g_n(t,y,z)| &\leq \Lambda_0 + \Lambda_y|\tilde{\rho}_n(y)| + \Lambda_z( |\rho_n(z)| + f(|\tilde{\rho}_n(y)|)|\rho_n(z)|^2 )\\
&\leq \Lambda_0 + \Lambda_y|y| + \Lambda_z( |z| + f(|y|)|z|^2).
\end{align*}
Hence conditions on the existence of a unique solution to the BSDE \eqref{eqmain3} in Theorem \ref{th1} are satisfied. Therefore, from \eqref{bound2} we deduce that: $\sup_{n\in \mathbb{N}}\|\mathcal{E}(Z^n* B)\|_{BMO} <\infty.$ From (P2) in Lemma \ref{lem 2.1} we then deduce the existence of such a constant $r$ for which the bound $\sup_{n\in \mathbb{N}}\|\mathcal{E}(Z^n* B) \|_{ L^{r} } < \infty$ holds. This concludes the proof.
\end{proof}
{\begin{remark}
As a consequence of Corollary \ref{stab},  the sequence $(Y^n)_{n\in \mathbb{N}}$ converges to $Y$ uniformly on $[0,T],$ the sequence $(Z^n)_{n\in \mathbb{N}}$ converges to $Z$ in $\mathcal{H}^2(\mathbb{R}^d)$ and $(Y,Z)$ solves the BSDE \eqref{eqmain1}.
\end{remark}}

\begin{lemm}\label{lemma 5.2}
Suppose $\xi \in\mathbb{D}^{1,\infty}$ and for each $n \in \mathbb{N},$ let  $g_n$ be as in \eqref{eqmain3-}. Then the solution $\Theta^n =(Y^n,Z^n)_{n\in \mathbb{N}}$ to the BSDE \eqref{eqmain3} belongs to $\mathbb{L}_{1,2} \times (\mathbb{L}_{1,2})^d$. A version of $\{ (D_u Y^n_t, D_u Z_t^n), 0\leq u,t\leq T  \}$ is given by 
\begin{align}\label{MalY^n}
D_u Y_t^n =&0 \text{ and } D_u Z_t^n = 0, \text{ if } t\in [0,u),\nonumber\\
D_u Y_t^n =& D_u\xi -\int_{t}^{T} D_u Z_s^n\mathrm{d}B_s\\
& + \int_{t}^{T} \left[ (D_ug_n)(s,\Theta_s^n) + \langle (\nabla g_n)(s,\Theta_s^n), D_u\Theta_s^n  \rangle   \right]\mathrm{d}s, \text{ if } t\in [u,T].\nonumber
\end{align}
Moreover $\{ D_tY_t^n, 0\leq t \leq T \}$ defined by the above equation is a version of $\{ Z_t^n, 0\leq t \leq T\}.$
Furthermore for any $p > 1,$ the following holds:
\begin{align}\label{eqboundmalder1}
\sup_{n\in \mathbb{N}}\int_{0}^{T} \mathbb{E}\Big[ \|D_u Y^n \|_{\mathcal{S}^{2p} }^{2p} + \|D_u Z^n \|_{\mathcal{H}^{2p} }^{2p} \Big]\mathrm{d}u < \infty.
\end{align}
\end{lemm}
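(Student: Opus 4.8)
The plan is to handle each fixed $n$ by the classical Lipschitz Malliavin theory and then extract a bound that does not deteriorate as $n\to\infty$. For the first part, recall from the discussion preceding Lemma~\ref{lemma 5.1} that for every fixed $n$ the truncated generator $g_n$ is globally Lipschitz in its spatial variables, with $\nabla_y g_n,\nabla_z g_n$ bounded by the ($n$-dependent) constant $\Lambda_n$. Combined with $\xi\in\mathbb{D}^{1,\infty}\subset\mathbb{D}^{1,2}$ and the growth bound on $D_ug$ in (M2), one checks that the hypotheses of Theorem~\ref{Maldiff} hold for the pair $(\xi,g_n)$: the boundedness and Lipschitz continuity of the deterministic maps $\tilde\rho_n,\rho_n$ transfer the integrability of $D_ug$ in (M2) to $D_ug_n(\cdot,y,z)=(D_ug)(\cdot,\tilde\rho_n(y),\rho_n(z))$, while the bounded derivatives of $g_n$ make the associated linear variational BSDE well posed. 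Theorem~\ref{Maldiff} then yields $(Y^n,Z^n)\in\mathbb{L}_{1,2}\times(\mathbb{L}_{1,2})^d$, the representation \eqref{MalY^n} for a version of $(D_uY^n,D_uZ^n)$, and the identification $D_tY^n_t=Z^n_t$.

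For the uniform estimate \eqref{eqboundmalder1} I would read \eqref{MalY^n}, for each fixed $u$, as a BSDE for the unknown $(D_uY^n,D_uZ^n)$ with terminal value $D_u\xi$ and affine, hence stochastic Lipschitz, generator
\[
(s,P,Q)\longmapsto (D_ug_n)(s,\Theta^n_s)+\nabla_y g_n(s,\Theta^n_s)\,P+\nabla_z g_n(s,\Theta^n_s)\cdot Q .
\]
The point is that its coefficients are controlled uniformly in $n$: differentiating the structural bound of Assumption~\ref{assum2} gives $|\nabla_z g_n(s,\Theta^n_s)|\le\Lambda_z(1+2f(|\tilde\rho_n(Y^n_s)|)|\rho_n(Z^n_s)|)\le\Lambda_z(1+2\varphi(\Upsilon^{(1)})|Z^n_s|)$, so that $\nabla_z g_n(\cdot,\Theta^n)*B\in\mathcal{H}_{\text{BMO}}$ with norm controlled through $\Upsilon^{(1)}$ and $\sup_n\|Z^n*B\|_{BMO}\le\Upsilon^{(2)}$ (Lemma~\ref{lemma 5.1}); and $|\nabla_y g_n(s,\Theta^n_s)|\le\Lambda_y(1+2|Z^n_s|^\alpha)$ with $\alpha<1$, whose time integral has uniform exponential moments by H\"older's inequality and the energy inequality for BMO martingales. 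These are exactly the data under which the a priori moment estimate of Lemma~\ref{thap0} applies, with a constant $C$ and an exponent $q\in(1,\infty)$ depending only on $T$, $p$ and $\Upsilon^{(2)}$ (in particular not on $n$), giving
\[
\mathbb{E}\Big[\|D_uY^n\|_{\mathcal{S}^{2p}}^{2p}+\|D_uZ^n\|_{\mathcal{H}^{2p}}^{2p}\Big]\le C\,\mathbb{E}\Big[|D_u\xi|^{2pq}+\Big(\int_0^T|(D_ug_n)(s,\Theta^n_s)|\mathrm{d}s\Big)^{2pq}\Big]^{1/q}.
\]

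It then remains to integrate in $u$ and bound the right-hand side uniformly. Since $|\tilde\rho_n(Y^n)|\le\Upsilon^{(1)}$, $|\rho_n(Z^n)|\le|Z^n|$ and $f(|\tilde\rho_n(Y^n)|)\le\varphi(\Upsilon^{(1)})$, the growth condition in (M2) yields
\[
|(D_ug_n)(s,\Theta^n_s)|\le C\,K_u(s)\big(1+|Z^n_s|^\alpha\big)+C\,\tilde K_u(s)\big(1+|Z^n_s|^\alpha+|Z^n_s|\big).
\]
Integrating the previous moment estimate over $u\in[0,T]$ and applying H\"older's inequality, I would separate the contribution of $(K_u,\tilde K_u)$, finite by the integrability postulated in (M2), from the moments of $\int_0^T|Z^n_s|^2\mathrm{d}s$, which are bounded uniformly in $n$ by the energy inequality together with $\sup_n\|Z^n*B\|_{BMO}\le\Upsilon^{(2)}$ (equivalently Lemma~\ref{thap0} applied to \eqref{eqmain3}); the $\mathbb{D}^{1,\infty}$-regularity of $\xi$ disposes of the $D_u\xi$ term. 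This gives \eqref{eqboundmalder1}. The crux of the argument, and the step I expect to be most delicate, is precisely this uniformity in $n$: it hinges on the fact that the variational BSDE \eqref{MalY^n} inherits a BMO coefficient in front of $D_uZ^n$ whose norm is controlled by $\Upsilon^{(2)}$, and that the loss of Lipschitzianity in $z$ enters only through the subquadratic exponent $\alpha<1$ and through $f$ evaluated on the uniformly bounded range of $Y^n$.
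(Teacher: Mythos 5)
Your proposal is correct and takes essentially the same route as the paper's own proof: Theorem~\ref{Maldiff} applied to the globally Lipschitz pair $(\xi,g_n)$ for each fixed $n$ to get the representation \eqref{MalY^n}, then the uniform-in-$n$ bound via the a priori estimate of Lemma~\ref{thap0}, using the BMO control of $\nabla_y g_n(\cdot,\Theta^n)*B$ and $\nabla_z g_n(\cdot,\Theta^n)*B$ inherited from $\sup_n\|Z^n*B\|_{BMO}\le\Upsilon^{(2)}$ (Lemma~\ref{lemma 5.1}), and finally the truncation properties together with (M2) and the uniform moments of $(Y^n,Z^n)$ to control $\int_0^T|(D_ug_n)(s,\Theta_s^n)|\,\mathrm{d}s$ after integrating in $u$. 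The only cosmetic difference is that the paper interchanges the $u$-integral with the power $1/q$ via Jensen's inequality for concave functions where you invoke H\"older, which is immaterial.
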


\begin{proof}
The proof of the first statement concerning the Malliavin derivatives $(DY^n,DZ^n)$ of $\Theta^n =(Y^n,Z^n)_{n\in \mathbb{N}}$ and the representation \eqref{MalY^n} follows from Theorem \ref{Maldiff} under  Assumptions \ref{assum5}.

Let us now focus on the proof of the bound \eqref{eqboundmalder1}. Note that, the generators $F^n$ in equation \eqref{MalY^n} is stochastic Lipschitz in its second and third variables. Indeed, from the properties of the functions $\rho_, \tilde{\rho}_n$ and $f$, one can obtain that:
\begin{align*}
|\nabla_y g_n(t,Y^n_t,Z^n_t)| &\leq \Lambda_y(1+ |Z^n_t|^{\alpha})\\
|\nabla_z g_n(t,Y^n_t,Z^n_t)| &\leq \Lambda_z(1+ f(|Y^n_t|)|Z^n_t|).
\end{align*}
Then, Lemma \ref{lemma 5.1} implies that: $\nabla_y g_n *B := \int_0^{\cdot} \nabla_y g_n(s,\Theta_s)\mathrm{d}B_s$ and $\nabla_z g_n *B := \int_0^{\cdot} \nabla_z g_n(s,\Theta_s)\mathrm{d}B_s$ are both BMO martingales, uniformly in $n \in \mathbb{N}.$ 
Therefore, the subsequent bound follows from Lemma \ref{thap0}: for $p>1$, there exists $q\in (1,\infty)$ such that
\begin{align*}
\mathbb{E}\Big[ \sup_{t\in [0,T]}|D_u Y_t^n|^{2p} + \Big( \int_{0}^{T} |D_u Z_s^n|^2 \mathrm{d}s\Big)^p \Big]
&\leq C \mathbb{E}\Big( |D_u\xi|^{2pq} + \Big\{\int_{0}^{T} |(D_u g_n)(t,Y_t^n,Z_t^n)| \mathrm{d}t\Big\}^{2pq}   \Big)^{\frac{1}{q}},
\end{align*} 
and $q$ only depends on $T,p$ and $\|Z^n*B\|_{BMO}.$ Hence, the Jensen's inequality for concave functions leads to:
\begin{align*}
&	\int_{0}^{T}\Big\{ \mathbb{E}\Big[ \sup_{t\in [0,T]}|D_u Y_t^n|^{2p} + \Big( \int_{0}^{T} |D_u Z_s^n|^2 \mathrm{d}s \Big)^p \Big] \Big\}\mathrm{d}u\\
\leq &C \Big( \int_{0}^{T} \mathbb{E}\Big[ |D_u\xi|^{2pq} + \Big( \int_{0}^{T} |(D_u g_n)(t,Y_t^n,Z_t^n)| \mathrm{d}t \Big)^{2pq} \Big] \mathrm{d}u  \Big)^{\frac{1}{q}}.
\end{align*} 
Since $\xi\in\mathbb{D}^{1,\infty}$, it follows that the first term on the right side of the above inequality is finite. As for the second term is also finite,  set $\gamma= 2pq$ and recall that the sequence of functions $(g_n)_{n\in \mathbb{N}}$ is given by equation \eqref{eqmain3-}. From assumption (M2), we deduce 
\begin{align*}
&\mathbb{E} \Big( \int_{0}^{T} |(D_u g_n)(t,Y_t^n,Z_t^n)| \mathrm{d}t \Big)^{\gamma}\\
\leq& C \mathbb{E} \Big( \int_{0}^{T} K_u(t)(1+|\tilde{\rho}(Y_t^n)|+ f(|\tilde{\rho}(Y_t^n)|)|{\rho}(Z_t^n)|^{\alpha} )\mathrm{d}t \notag\\
&+ \sup_{t\in [0,T]}|\tilde{K}_u(t)|^2
+ \Big\{\int_{0}^{T} (1 + |{\rho}(Z_t^n)|^{\alpha} + f(|\tilde{\rho}(Y_t^n)|)|{\rho}(Z_t^n)|) \mathrm{d}t\Big\}^{2} \Big)^{\gamma}
\end{align*}
Using the properties of $\tilde{\rho}\text{ and } \rho$, the fact that $f$ is increasing, H\"{o}lder's inequality and the inequality $|ab|\leq a^2 + b^2$, we deduce the following: 
\begin{align*}
&\mathbb{E} \left( \int_{0}^{T} |(D_u g_n)(t,Y_t^n,Z_t^n)| \mathrm{d}t \right)^{\gamma}\\
\leq &C \mathbb{E} \Big( \int_{0}^{T} K_u(t)(1+|Y_t^n|+ f(|Y_t^n|)|Z_t^n|^{\alpha} )\mathrm{d}t  \\
&\qquad + \sup_{t\in [0,T]}|\tilde{K}_u(t)|^2 + \Big( \int_{0}^{T} (1 + |Z_t^n|^{\alpha} + f(|Y_t^n|)|Z_t^n|) \mathrm{d}t\Big)^{2} \Big)^{\gamma}\\
\leq& C \mathbb{E}\Big(  \int_{0}^{T} |K_u(t)|^{2} \mathrm{d}t +  \int_{0}^{T} (1+|Y_t^n|+ [f(|Y_t^n|)|Z_t^n|]^{\alpha} )^2\mathrm{d}t \\
&\qquad+ \sup_{t\in [0,T]}|\tilde{K}_u(t)|^{2}
+ \Big(\int_{0}^{T} (1 + |Z_t^n|^{\alpha} + f(|Y_t^n|)|Z_t^n|)\mathrm{d}t\Big)^{2} \Big)^{\gamma} < \infty.
\end{align*}
The proof will be complete, if we prove that
\begin{align}\label{last}
\sup_{n\in \mathbb{N}}\mathbb{E}\left( \int_{0}^{T} (1+|Y_t^n|+ f(|Y_t^n|)|Z_t^n|^{\alpha} )^2\mathrm{d}t + \Big(\int_{0}^{T} (1 + |Z_t^n|^{\alpha} + f(|Y_t^n|)|Z_t^n|)\mathrm{d}t\Big)^{2}\right)^{\gamma} < \infty. 
\end{align}
Let us notice that, for any $(y,z) \in \mathbb{R}\times \mathbb{R}^d$ and $\alpha \in (0,1)$  we have the following elementary inequality:
\[ f(|y|)^2|z|^{2\alpha} \leq (1-\alpha) \big( f(|y|)\big)^{\frac{2}{1-\alpha}} + \alpha |z|^2 \leq  (1-\alpha) \big( \sup_{0\leq y\leq \|y\|}\varphi(y)\big)^{\frac{2}{1-\alpha}} + \alpha |z|^2 . \]
Then the first summand \eqref{last} writes 
\begin{align*}
&\sup_{n\in \mathbb{N}}\mathbb{E}\left( \int_{0}^{T} (1+|Y_t^n|+ f(|Y_t^n|)|Z_t^n|^{\alpha} )^2\mathrm{d}t	\right)^{\gamma}\\
& \leq C T + \sup_{n\in \mathbb{N}}\mathbb{E}\left(\sup_{t \in [0,T]} |Y_t^n|^2 + \int_{0}^{T}  |Z_t^n|^2\mathrm{d}t \right)^{\gamma}\\
& \leq C T + C \sup_{n\in \mathbb{N}}\mathbb{E}\Big[ |\xi|^{2\gamma q} + \Big(\int_{0}^{T}| g_n(s,0,0)|\mathrm{d}s \Big)^{2\gamma q}  \Big]^{\frac{1}{q}} < \infty,
\end{align*}
where, we used the Lemma \ref{thap0} again to justify the last inequality. Using almost the same technique as previously, one can derive a uniform bound for the last summand in \eqref{last}.
\end{proof}
We are now in position to prove the main theorem of this section.	
\begin{proof}[Proof of Theorem \ref{Main2} ] Let us define by $\mu$ the measure $\mathrm{d}\mu = \mathrm{d}\mathbb{P}\otimes\mathrm{d}u\otimes\mathrm{d}t$. Using Lemmas \ref{lemma 5.1} and \ref{lemma 5.2}, there exists a subsequence $(D_uY^n_t,D_uZ^n_t)$ (still indexed by $n$) that converges weakly to a limit process denoted by $(U_{u,t},V_{u,t}), 0\leq u,t\leq T$ in  the space of random variables with values in $L^2(\Omega\times[0,T])$. Thus, it is readily seen that for almost $t\in [0,T]$, the solution $(Y_t,Z_t)$ to the BSDE \eqref{eqmain1} is Malliavin differentiable and $(D_uY_t,D_uZ_t)= (U_{u,t},V_{u,t})$  $\mathrm{d}\mu\text{-a.e.}$ in $\Omega\times[0,T]\times [0,T]$. To conclude, we only need to prove that each term in equation \eqref{MalY^n} converges to its corresponding counterpart in equation \eqref{MalY} when $n$ goes to infinity. The convergence off the stochastic intergal is well known and we do not reproduce its proof here (see \cite{ArImDR} or \cite[Theorem 3.2.3]{DosReis}). 

Using assumption (M1) and the dominated convergence theorem, one can show that $
\int_0^T\langle (\nabla g_n)(s,\Theta_s^n),D_u\Theta_s^n \rangle \mathrm{d}s \text{ converges to }\int_0^T \langle (\nabla g)(s,\Theta_s),D_u\Theta_s \rangle \mathrm{d}s 
$
in the weak topology of  $L^1(\Omega\times [0,T])$. Indeed, let $\zeta$ be any bounded $\mathfrak{F}_T\text{-adapted}$ random variable. For $n \in \mathbb{N}$ and for almost all $u \in [0,T]$, using H\"older inequality, we have 
\begin{align*}
&\mathbb{E}\Big[ \zeta\int_{0}^{T} D_uY_s^n \big(  2 + |Z_s|^{\alpha} + |Z_s^n|^{\alpha}  \big)\mathrm{d}s\Big]\\
&\leq C\essup |\zeta|\mathbb{E} \Big[  \sup_{s\in [0,T]}|D_uY_s^n|^2 + \Big(\int_{0}^{T} \big(  4 + |Z_s|^{2\alpha} + |Z_s^n|^{2\alpha}  \big) \mathrm{d}s \Big)\Big].
\end{align*}
Using Lemma \ref{lemma 5.1} and \ref{lemma 5.2} and Theorem \ref{th1}, it follows that the right side of the above inequality is uniformly bounded in $n$. In addition, the continuity of $\nabla_y g$, the uniform convergence of $(Y^n)_{n\in \mathbb{N}}$ to $Y$ and the $\mathcal{H}^2$ convergence of  $(Z^n)_{n\in \mathbb{N}}$ to $Z$ yield the convergence of  $(\nabla_yg_n)(s,\Theta_s^n)$ to $(\nabla_yg)(s,\Theta_s)$ in $\mathcal{H}^2(\Omega\times[0,T])$ as $n$ goes to infinity.  Since $\zeta$ is chosen arbitrarily, the uniform convergence of the process $(DY^n)_{n\in \mathbb{N}}$ to $DY$ and the dominated convergence imply that 
\[  \lim_{n\rightarrow \infty} \int_{0}^{T} \nabla_y g_n(s,\Theta^n_s)D_uY^n_s \mathrm{d}s = \int_{0}^{T} \nabla_y g(s,\Theta_s)D_uY_s \mathrm{d}s .\]
Applying the same reasoning as above, we deduce that 
\[  \lim_{n\rightarrow \infty} \int_{0}^{T} \nabla_z g_n(s,\Theta^n_s)D_uZ^n_s \mathrm{d}s = \int_{0}^{T} \nabla_z g(s,\Theta_s)D_uZ_s \mathrm{d}s .\]

Let us now show that 
$$
D_ug_n(s,Y_s^n,Z_s^n) \text{ converges to } D_ug(s,Y_s,Z_s) \text{ in the weak topology of } L^2(\Omega\times [0,T]).
$$
Using \eqref{eqmain3-}, assumption (M2) and the assumptions on $\rho, \tilde{\rho}$ and the function $f$, we have
\begin{align*}
&\mathbb{E}\int_{0}^{T}\int_{0}^{T} |D_ug_n(s,Y_s^n,Z_s^n)|^2\mathrm{d}s\mathrm{d}u\\ \leq& \mathbb{E}\int_{0}^{T}\int_{0}^{T} |K_u(s)|^2 (1+ |\tilde{\rho}(Y_s^n)| + [f(|\tilde{\rho}(Y_s^n)|)|\rho(Z_s^n)|]^{\alpha})^2\mathrm{d}s\mathrm{d}u \\
& + \mathbb{E}\int_{0}^{T}\int_{0}^{T} |\tilde K_u(s)|^2 (1+ |\rho(Z_s^n)|^{\alpha} + f(|\tilde{\rho}(Y_s^n)|)|\rho(Z_s^n)|)^2\mathrm{d}s\mathrm{d}u\\
\leq &\mathbb{E}\int_{0}^{T}\int_{0}^{T} |K_u(s)|^2 \Big(1+ |Y_s^n| + f(|Y_s^n|)|Z_s^n|^{\alpha} \Big)^2\mathrm{d}s\mathrm{d}u \\
&+ \mathbb{E}\int_{0}^{T} \sup_{0\leq s \leq T}|\tilde K_u(s)|^2 \int_{0}^{T} \Big(1+ |Z_s^n|^{\alpha} + f(|Y_s^n|)|Z_s^n\Big)^2\mathrm{d}s\mathrm{d}u \\
\leq & C \left[ \mathbb{E}\Big( \int_{0}^{T} |K_u(s)|^2 \mathrm{d}u \Big)^2 \right]^{\frac{1}{2}} \left[ \mathbb{E} \Big( \int_{0}^{T}  (1+ |Y_s^n|^2 + \sup_{0\leq y \leq \Upsilon^{(1)}}\big(\varphi(y)\big)^{\frac{2}{1-\alpha}} + |Z_s^n|^2)\mathrm{d}s \Big)^2 \right]^{\frac{1}{2}}\\
&+ C \mathbb{E} \left(\sup_{0\leq s \leq T}|\tilde K_u(s)|^2 \right) + C \mathbb{E} \left( \int_{0}^{T}  \Big(1+ |Z_s^n|^{2} + \sup_{0\leq y \leq \Upsilon^{(1)}}\varphi^2(y)|Z_s^n|^2 \Big)\mathrm{d}s\right)^2 	
\end{align*}
where, we used H\"older inequality, the elementary inequality: for $\alpha \in (0,1),$ $|z|^{2\alpha} \leq C_{\alpha} + |z|^2$ and $\varphi$ is the smallest continuous function such that $f \leq \varphi$, to obtain the last inequality which is finite from Lemma \ref{lemma 5.1} and Lemma \ref{lem 2.1}. Therefore, Lemma \ref{lemma 1.2.3} yields that $D_ug_n(s,Y_s^n,Z_s^n)$ converges to  $D_ug(s,Y_s,Z_s) $ in the weak topology of  $L^2(\Omega\times [0,T])$. The proof is completed.						
\end{proof}

\subsection{Solvability of the BSDE \eqref{eqmain1} with bounded Malliavin differentiable terminal value }
In this subsection, we provide a necessary condition under which the unique solution $(Y,Z)$ to the BSDE \eqref{eqmain1} belongs to $\mathcal{S}^{\infty}(\mathbb{R})\times \mathcal{S}^{\infty}(\mathbb{R}^d) $. We assume the following:
\begin{itemize}
\item[(M3)] For each $(y,z) \in \mathbb{R}\times \mathbb{R}^d,$ it holds that  $(g(t,y,z))_{t \in [0,T]} \in \mathbb{L}_{1,2p}(\mathbb{R})$ for all $p \geq 1$. Its Malliavin derivative denoted by $(D_ug(t,y,z))_{u,t \in [0,T]}$ satisfies
\begin{align*}
|D_u g(t,y,z)| \leq K_u(t)(1 + |y| + f(|y|) |z|^{\alpha})  \text{ a.s.}
\end{align*}
for any $(u,t,y,z) \in [0,T]\times [0,T]\times \mathbb{R}\times \mathbb{R}^d,$ $\alpha\in (0,1)$ and the process $(K_u(t))_{u,t\in [0,T]}$ is uniformly bounded.
\end{itemize}
\begin{prop}\label{prop 4.6}
Let Assumptions \ref{assum1},\ref{assum2} and (M3) be in force. Assume further that  $\xi$ has a bounded Malliavin derivative i.e. $\|D\xi\|_{\mathcal{S}^{\infty}} = \sup_{0\leq t\leq T}\|D_t\xi\| < \infty \,\,  \mathbb{P}\text{-a.s.}$. Then, the BSDE \eqref{eqmain1} has a unique solution $(Y,Z)\in \mathcal{S}^{\infty}(\mathbb{R})\times \mathcal{S}^{\infty}(\mathbb{R}^d) $
\end{prop}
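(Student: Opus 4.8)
The plan is to exploit the representation $Z_t=D_tY_t$ furnished by Theorem \ref{Main2} and the linear backward equation \eqref{MalY} for the Malliavin derivative, and to prove that $\esssup_{u,t,\omega}|D_uY_t|$ is finite. First I would note that (M3) is exactly the special case of (M2) in which $\tilde K_u\equiv 0$ and $K_u$ is uniformly bounded, while $\|D\xi\|_{\mathcal{S}^{\infty}}<\infty$ forces $\xi\in\mathbb{D}^{1,\infty}$; hence all hypotheses of Theorem \ref{Main2} are in force, $(Y,Z)\in\mathbb{L}_{1,2}\times(\mathbb{L}_{1,2})^d$, the pair $(D_uY,D_uZ)$ solves \eqref{MalY}, and $\{D_tY_t\}$ is a version of $\{Z_t\}$. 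Since $Y\in\mathcal{S}^{\infty}$ with $\|Y\|_{\mathcal{S}^{\infty}}\le\Upsilon^{(1)}$ by Theorem \ref{th1}, it remains only to bound $|D_uY_t|$ by a deterministic constant, uniformly in $(u,t,\omega)$.

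For fixed $u$, \eqref{MalY} is a linear BSDE in $(D_uY,D_uZ)$ with coefficients $a_s:=\nabla_y g(s,\Theta_s)$, $b_s:=\nabla_z g(s,\Theta_s)$ and source $c^u_s:=(D_ug)(s,\Theta_s)$. Assumption \ref{assum2} gives $|a_s|\le \Lambda_y(1+2|Z_s|^{\alpha})$ and $|b_s|\le\Lambda_z(1+2\varphi(\Upsilon^{(1)})|Z_s|)$, while (M3) together with $\|Y\|_{\mathcal{S}^{\infty}}\le\Upsilon^{(1)}$ yields $|c^u_s|\le K(1+\Upsilon^{(1)}+\varphi(\Upsilon^{(1)})|Z_s|^{\alpha})$, where $K$ is the uniform bound on $K_u$. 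Since $Z\in\mathcal{H}_{\text{BMO}}$, the kernel $b$ is a BMO integrand, so I would change measure via $\mathrm{d}\mathbb{Q}=\mathcal{E}(b*B)\mathrm{d}\mathbb{P}$; under $\mathbb{Q}$ the process $B^{\mathbb{Q}}=B-\int_0^{\cdot}b_s\mathrm{d}s$ is a Brownian motion and, by the Kazamaki stability of BMO under such changes, $Z*B$ remains in $\text{BMO}(\mathbb{Q})$. Setting $e_t:=\exp(\int_0^t a_s\mathrm{d}s)$ and applying It\^o to $e_tD_uY_t$ under $\mathbb{Q}$ (the resulting stochastic integral being a true $\mathbb{Q}$-martingale, exactly as in the uniqueness part of Theorem \ref{th1}) gives
\begin{equation*}
e_tD_uY_t=\mathbb{E}^{\mathbb{Q}}\Big[e_TD_u\xi+\int_t^T e_s c^u_s\,\mathrm{d}s\,\Big|\,\mathfrak{F}_t\Big].
\end{equation*}

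The heart of the argument is the estimate of this conditional expectation. As $e_t$ is $\mathfrak{F}_t$-measurable I would pull $e_t^{-1}$ inside, bound $e_s/e_t,\,e_T/e_t\le \exp(\Lambda_y\int_t^T(1+2|Z_r|^{\alpha})\mathrm{d}r)$, and then apply the conditional Cauchy--Schwarz inequality to split off the exponential weight from the factor $\|D\xi\|_{\mathcal{S}^{\infty}}+K\int_t^T(1+\Upsilon^{(1)}+\varphi(\Upsilon^{(1)})|Z_s|^{\alpha})\mathrm{d}s$. The decisive point is that $\alpha<1<2$: the elementary inequality $|Z|^{\alpha}\le C_{\varepsilon}+\varepsilon|Z|^2$ reduces both factors to conditional moments of $\exp(\lambda\int_t^T|Z_r|^2\mathrm{d}r)$ and of $(\int_t^T|Z_r|^2\mathrm{d}r)^2$. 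Choosing $\varepsilon$ small enough that $\lambda$ stays below the John--Nirenberg threshold attached to $\|Z*B\|_{\text{BMO}(\mathbb{Q})}$, the energy inequality for BMO martingales (Lemma \ref{lem 2.1}) bounds both by deterministic constants, so $|D_uY_t|\le C$ uniformly in $(u,t,\omega)$, with $C$ depending only on $\|D\xi\|_{\mathcal{S}^{\infty}}$, $\Lambda_y,\Lambda_z,\alpha,T,K,\Upsilon^{(1)},\Upsilon^{(2)}$ and $\varphi(\Upsilon^{(1)})$.

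It follows that $\|Z\|_{\mathcal{S}^{\infty}}=\|D_{\cdot}Y_{\cdot}\|_{\mathcal{S}^{\infty}}\le C$ (the continuous version of $Z$ being supplied by the path regularity of $D_uY$), whence $(Y,Z)\in\mathcal{S}^{\infty}(\mathbb{R})\times\mathcal{S}^{\infty}(\mathbb{R}^d)$, uniqueness being inherited from Theorem \ref{th1}. I expect the \emph{main obstacle} to be precisely the exponential weight $\exp(\int_t^T|Z_r|^{\alpha}\mathrm{d}r)$ generated by $\nabla_y g$: a naive pathwise bound $|Z|\le M$ produces a term $\exp(CM^{\alpha})$ that cannot be reabsorbed, so one must keep $|Z|^{\alpha}$ stochastic inside the conditional expectation and exploit strict subquadraticity through John--Nirenberg. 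This is also why (M3) retains only the $|z|^{\alpha}$ term and discards the $\tilde K_u(1+|z|^{\alpha}+f(|y|)|z|)$ contribution permitted in (M2), which would force $|c^u_s|\sim|Z_s|$ and break the second-moment control. For full rigour I would first run the estimate on the Lipschitz truncations $(Y^n,Z^n)$ of \eqref{eqmain3}, where $D_uY^n$ is genuinely bounded and the true-martingale property is immediate, obtain the bound uniformly in $n$, and then pass to the limit using the weak convergence $D_uY^n\rightharpoonup D_uY$ established in the proof of Theorem \ref{Main2}.
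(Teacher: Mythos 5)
Your core estimate coincides with the paper's (Lemma \ref{lemm 4.7}): linearise the Malliavin BSDE, remove $\nabla_z g$ by a Girsanov change of measure, write $e_t D_u Y_t$ as a conditional expectation, and absorb the weight $e_s/e_t \le \exp\big(\Lambda_y\int_t^T(1+2|Z_r|^{\alpha})\mathrm{d}r\big)$ through conditional H\"older and the BMO machinery of Lemma \ref{lem 2.1}, exploiting exactly the subquadraticity $\alpha<1<2$ that you single out as the crux; the paper likewise runs this on the truncated equations \eqref{eqmain3}, as your final ``for full rigour'' step proposes. Where you genuinely diverge is the endgame. You pass to the limit: from $\sup_n\|D_uY^n\|_{\infty}\le C$ and the weak $L^2$ convergence $D_uY^n\rightharpoonup D_uY$ you deduce $|D_uY_t|\le C$ (legitimate, since the $L^{\infty}$ ball is convex and strongly closed, hence weakly closed, in $L^2$), and then invoke Theorem \ref{Main2} to identify $Z_t=D_tY_t$. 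The paper avoids any limit: having $\sup_{n}\|Z^n\|_{\mathcal{S}^{\infty}}<\infty$, it fixes $n^*$ larger than $\sup_n\essup_{\omega}\sup_t\big(|Y^n_t|+|Z^n_t|\big)$, so that $g_{n^*}(t,Y^{n^*}_t,Z^{n^*}_t)=g(t,Y^{n^*}_t,Z^{n^*}_t)$ $\mathrm{d}t\otimes\mathbb{P}$-a.s., i.e.\ the truncation is inactive and $(Y^{n^*},Z^{n^*})$ itself solves \eqref{eqmain1} in $\mathcal{S}^{\infty}\times\mathcal{S}^{\infty}$, uniqueness coming from Theorem \ref{th1}. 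The paper's trick buys two things your route must supply separately: it sidesteps the identification and continuous-version discussion for $Z$, and it never needs the full strength of Theorem \ref{Main2}. The latter matters because Proposition \ref{prop 4.6} does not assume (M1) (continuous differentiability of $g$ in $(y,z)$), so your opening claim that ``all hypotheses of Theorem \ref{Main2} are in force'' is a slight overstatement; like the paper, you should first assume smoothness, observe that your constant $C$ is independent of the supremum norms of $\nabla_y g$ and $\nabla_z g$, and then remove the smoothness by a standard approximation.
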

We will start by proving the following lemma
\begin{lemm}\label{lemm 4.7}
Under the assumptions of Proposition \ref{prop 4.6}, we have
\[  \sup_{n\in \mathbb{N}} \essup_{\omega\in \Omega}\sup_{t \in [0,T]}|Z^n_t(\omega)| < +\infty  \]
where $(Y^n,Z^n)$ stands for the unique solution to the BSDE \eqref{eqmain3}.
\end{lemm}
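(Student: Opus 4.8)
The plan is to leverage the identity $Z^n_t = D_t Y^n_t$ furnished by Lemma \ref{lemma 5.2} and to bound the Malliavin derivative of $Y^n$ uniformly. Since $\{D_t Y^n_t\}_{t}$ is a version of $\{Z^n_t\}_{t}$, it suffices to produce a deterministic constant $C$, independent of $n,u,t$ and $\omega$, such that $|D_u Y^n_t|\le C$ a.s. for all $u\le t$; as $D_u Y^n$ is continuous in $t$ for each fixed $u$, a density argument gives $\sup_u\|D_u Y^n\|_{\mathcal{S}^\infty}\le C$, and taking $u=t$ yields $\sup_n\essup_\omega\sup_t|Z^n_t|\le C$. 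Fix $u\in[0,T]$ and view \eqref{MalY^n} as a linear BSDE on $[u,T]$ for $(D_u Y^n,D_u Z^n)$, with terminal value $D_u\xi$, zero-order coefficient $a^n_s:=\nabla_y g_n(s,\Theta^n_s)$, first-order coefficient $b^n_s:=\nabla_z g_n(s,\Theta^n_s)$, and free term $h^n_s:=(D_u g_n)(s,\Theta^n_s)$.

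First I would eliminate the term $\langle b^n_s,D_u Z^n_s\rangle$ by a change of measure. From the proof of Lemma \ref{lemma 5.2} one has $|b^n_s|\le\Lambda_z(1+f(|Y^n_s|)|Z^n_s|)$, so $b^n*B$ is a BMO martingale whose norm is bounded uniformly in $n$ (using $\|Y^n\|_{\mathcal{S}^\infty}\le\Upsilon^{(1)}$, $f(|Y^n|)\le\varphi(\Upsilon^{(1)})$ and $\sup_n\|Z^n*B\|_{\text{BMO}}\le\Upsilon^{(2)}$ from Lemma \ref{lemma 5.1}). Setting $\mathrm{d}\mathbb{Q}_n=\mathcal{E}(b^n*B)_T\,\mathrm{d}\mathbb{P}$, the process $B^{\mathbb{Q}_n}=B-\int_0^\cdot b^n_s\,\mathrm{d}s$ is a $\mathbb{Q}_n$-Brownian motion, and by Kazamaki's theorem the uniform BMO bound guarantees $\mathbb{Q}_n\sim\mathbb{P}$ with $\|Z^n*B^{\mathbb{Q}_n}\|_{\text{BMO}(\mathbb{Q}_n)}$ still bounded uniformly in $n$. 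The integrating-factor representation for linear BSDEs then gives, with $\Gamma^n_{t,s}=\exp(\int_t^s a^n_r\,\mathrm{d}r)$,
\[ D_u Y^n_t=\mathbb{E}^{\mathbb{Q}_n}\Big[\Gamma^n_{t,T}\,D_u\xi+\int_t^T\Gamma^n_{t,s}\,h^n_s\,\mathrm{d}s\,\Big|\,\mathfrak{F}_t\Big], \]
the true-martingale property of $\int\Gamma^n D_u Z^n\,\mathrm{d}B^{\mathbb{Q}_n}$ being verified exactly as in the uniqueness part of Theorem \ref{th1}.

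Next I would bound each ingredient using the sub-linear growth granted by $\alpha<1$ and, crucially, by (M3). The truncation estimates give $|a^n_s|\le\Lambda_y(1+|Z^n_s|^\alpha)$, so $\Gamma^n_{t,s}\le e^{\Lambda_y T}\exp(\Lambda_y\int_0^T|Z^n_r|^\alpha\,\mathrm{d}r)$; the hypothesis on $\xi$ gives $|D_u\xi|\le\|D\xi\|_{\mathcal{S}^\infty}$; and (M3) together with the bounds on $Y^n$ gives $|h^n_s|\le\|K\|_\infty(1+\Upsilon^{(1)}+\varphi(\Upsilon^{(1)})|Z^n_s|^\alpha)$, where $\|K\|_\infty$ is the uniform bound on $(K_u(t))$. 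Absorbing the polynomial factor $\int_t^T(1+|Z^n_s|^\alpha)\,\mathrm{d}s$ into the exponential, the whole right-hand side is dominated by $C_1\,\mathbb{E}^{\mathbb{Q}_n}[\exp(c\int_t^T(1+|Z^n_r|^\alpha)\,\mathrm{d}r)\mid\mathfrak{F}_t]$ for constants $C_1,c$ independent of $n$.

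The heart of the argument, and the step I expect to be the \emph{main obstacle}, is to bound this conditional exponential moment uniformly in $n,t,\omega$. Here $\alpha<1$ is indispensable: by H\"older, $\int_t^T|Z^n_r|^\alpha\,\mathrm{d}r\le T^{(2-\alpha)/2}(\langle Z^n*B^{\mathbb{Q}_n}\rangle_T-\langle Z^n*B^{\mathbb{Q}_n}\rangle_t)^{\alpha/2}$, and since $\alpha/2<1$ Young's inequality gives $x^{\alpha/2}\le\epsilon x+C_\epsilon$ for every $\epsilon>0$. Choosing $\epsilon$ small enough that $c\,\epsilon\,T^{(2-\alpha)/2}$ falls below the John-Nirenberg threshold $\lambda_0$ determined by the uniform bound on $\|Z^n*B^{\mathbb{Q}_n}\|_{\text{BMO}(\mathbb{Q}_n)}$ (this is where uniformity in $n$ is used, and where property (P4) of Lemma \ref{lem 2.1} enters), the conditional moment $\mathbb{E}^{\mathbb{Q}_n}[e^{\lambda(\langle\cdot\rangle_T-\langle\cdot\rangle_t)}\mid\mathfrak{F}_t]$ is bounded by a constant free of $n,t,\omega$. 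This delivers $|D_u Y^n_t|\le C$ a.s. with $C$ uniform, hence the claim. The decisive point is that (M3), by dropping the $\tilde K_u(t)(1+|z|^\alpha+f(|y|)|z|)$ term of (M2), keeps the free term $h^n$ of order $|Z^n|^\alpha$ rather than $|Z^n|$; it is exactly this sub-linearity that prevents blow-up of the exponential and upgrades the $L^p$ bound of Lemma \ref{lemma 5.2} to the pointwise bound sought here.
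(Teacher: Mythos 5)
Your argument is correct and coincides in all essentials with the paper's proof: both linearize \eqref{MalY^n}, remove the $z$-linear term via the Girsanov change of measure generated by $\nabla_z g_n$ (whose BMO norm is uniform in $n$), remove the $y$-linear term via the integrating factor $\exp\big(\int_0^{\cdot} \nabla_y g_n(s,\Theta_s^n)\,\mathrm{d}s\big)$, and bound the resulting conditional expectation using (M3), the boundedness of $D\xi$, and the sublinear exponent $\alpha<1$ — your explicit Young/John--Nirenberg treatment of the conditional exponential moment (really property (P3) rather than (P4) of Lemma \ref{lem 2.1}) is simply a more detailed rendering of the paper's appeal to H\"older, the reverse H\"older inequality and energy inequalities. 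The one step you omit is the paper's closing observation: since the constant obtained does not depend on the supremum norms of $\nabla_y g$ and $\nabla_z g$, a standard mollification extends the bound to generators that are not continuously differentiable in $(y,z)$, which is needed because Proposition \ref{prop 4.6} assumes only (M3) and not (M1), so Lemma \ref{lemma 5.2} does not apply verbatim as you invoke it.
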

\begin{proof}
Let us first assume that the drift $g$ is continuously differentiable. Then from Lemma \ref{lemma 5.2}, the solution $\Theta^n =(Y^n,Z^n)_{n\in \mathbb{N}}$ to the BSDE \eqref{eqmain3} belongs to $\mathbb{L}_{1,2} \times (\mathbb{L}_{1,2})^d$ and a version of $\{ (D_u Y^n_t, D_u Z_t^n), 0\leq u,t\leq T  \}$ satisfies for all $t\in [u,T]$ 
\begin{align*}  
D_u Y_t^n =& D_u\xi -\int_{t}^{T} D_u Z_s^n\mathrm{d}B_s\\
& + \int_{t}^{T} \left( (D_ug_n)(s,\Theta_s^n) + \langle (\nabla g_n)(s,\Theta_s^n), D_u\Theta_s^n  \rangle   \right)\mathrm{d}s.  
\end{align*} 
Since, $(D_t Y_t)_{t\in [0,T]}$ is a continuous modification of $(Z_t)_{t\in [0,T]}$, it is sufficient to establish a uniform bound of the process $(D_t Y_t)_{t\in [0,T]}$ to obtain the desired result.
Using once more the assumptions of the Lemma, the properties of the functions $\rho_n, \tilde{\rho}_n$ and $f$ we have
\begin{align*}
	|D_u g_n(t,Y^n_t,Z^n_t)| &\leq K_u(t)(1+ |Y^n_t| + f(|Y^n_t|)|Z^n_t|^{\alpha}) \text{ a.s.}\\
|\nabla_y g_n(t,Y^n_t,Z^n_t)| &\leq \Lambda_y(1+ |Z^n_t|^{\alpha})\text{ a.s.}\\
|\nabla_z g_n(t,Y^n_t,Z^n_t)| &\leq \Lambda_z(1+ f(|Y^n_t|)|Z^n_t|) \text{ a.s.}.
\end{align*}
This implies that: $\sup_{n\geq 1}\|\nabla_z g_n *B \|_{BMO}:= \sup_{n\geq 1}\| \int_0^{\cdot} \nabla_z g_n(s,\Theta_s)\mathrm{d}B_s \|_{BMO} < \infty$ and the measure $\mathbb{P}^n$ with density $\mathrm{d}\mathbb{P}^{n}:= \mathcal{E}(\nabla_z g_n *B)\mathrm{d}\mathbb{P}$ defines an equivalent measure to the probability $\mathbb{P}$. By the Girsanov's theorem the process $(B_t^n)_{t\in[0,T] }$ defined by $B_t^n = B_t -\int_0^t \nabla_z g_n(s,\Theta_s^n)\mathrm{d}s$ is a Brownian motion under the new probability measure $\mathbb{P}^{n}$.
Thus \eqref{MalY^n} can be written is terms of $B^n$ as follows
\begin{equation*}
D_u Y_t^n = D_u\xi -\int_{t}^{T} D_u Z_s^n\mathrm{d}B_s^n
+ \int_{t}^{T} F^n(s,D_uY_s^n,D_uZ_s^n) \mathrm{d}s,
\end{equation*}
where $F^n(s,D_uY_s^n,D_uZ_s^n):= (D_ug_n)(s,\Theta_s^n) + (\nabla_y g_n)(s,\Theta_s^n) D_u\Theta_s^n $. By using once more a standard linearisation technique and applying It\^{o}'s formula to the continuous semimartingale $(e_t^nD_u Y_t^n)_{0\leq t\leq T}$, one obtains
\begin{equation*}
e_t^nD_u Y_t^n = e_T^nD_u\xi -\int_{t}^{T} e_s^nD_u Z_s^n\mathrm{d}B_s^n
+ \int_{t}^{T} e_s^nD_ug_n(s,\Theta_s^n) \mathrm{d}s,
\end{equation*}
where $e_t^n = \exp(\int_0^t (\nabla_y g_n)(s,\Theta_s^n) \mathrm{d}s)$ belongs to $\mathcal{S}^p({\mathbb{R}})$ for all $p >1$. Multiplying both sides of the above equation by $(e_t^n)^{-1}$ and taking the conditional expectation, we have 
\begin{align*}
|D_u Y_t^n| \leq \mathbb{E}^{n}\Big[ (e_t^n)^{-1}e_T^n|D_u\xi| + \int_{t}^{T}(e_t^n)^{-1} e_s^n|D_ug_n(s,\Theta_s^n)| \mathrm{d}s\Big|\mathfrak{F}_t \Big],
\end{align*}
where $\mathbb{E}^n$ stands for the expectation under $\mathbb{P}^n$. Therefore, from the H\"older's inequality, we deduce the existence of a constant $C$ only depending on $\alpha$ and $\sup_{n \in \mathbb{N}}\| Z^n*B \|_{BMO}$ such that 
\begin{align*}
|D_u Y_t^n| \leq C \mathbb{E}^{n}\Big[ |D_u\xi|^{2} + \Big(\int_{t}^{T}|D_ug_n(s,\Theta_s^n)| \mathrm{d}s\Big|\mathfrak{F}_t\Big)^{2} \Big]^{\frac{1}{2}}
\end{align*}
From the assumptions of the Lemma, the first term on the right hand side in the above inequality is uniformly bounded. Let us focus on the second term. Applying successively, Girsanov's theorem, assumption (M3) and H\"older's inequality we obtain:
\begin{align*}
&\sup_{n \in \mathbb{N}}	\mathbb{E}^n\Big( \int_{t}^{T}|D_ug_n(s,\Theta_s^n)| \mathrm{d}s\Big|\mathfrak{F}_t \Big)^{2}\notag\\
=& \sup_{n \in \mathbb{N}} \mathbb{E} \Big(  \mathcal{E}(\nabla_z g_n*B) \int_{t}^{T}|D_ug_n(s,\Theta_s^n)| \mathrm{d}s \Big|\mathfrak{F}_t\Big)^{2}\notag\\
\leq &C  \sup_{n \in \mathbb{N}} \Big[ \mathbb{E}\Big( \mathcal{E}(\nabla_z g_n*B)\Big)^{r}\Big|\mathfrak{F}_t\Big]^{\frac{2}{r}}  \Big[ \mathbb{E}\Big( \int_{t}^{T}|D_ug_n(s,\Theta_s^n)| \mathrm{d}s \Big)^{r'}\Big|\mathfrak{F}_t\Big]^{\frac{2}{r'}}\notag\\
\leq & C  \sup_{n \in \mathbb{N}} \Big[ \mathbb{E}\Big( \int_{t}^{T}(1 + |Y_s^n| + f(|Y_s^n|)|Z_s^n|^{\alpha}) \mathrm{d}s \Big)^{r'}\Big|\mathfrak{F}_t\Big]^{\frac{2}{r'}}\\
\leq &CT +  \sup_{n \in \mathbb{N}} \Big[ \mathbb{E}\Big( \int_{t}^{T}|Z_s^n|^{2} \mathrm{d}s \Big)^{r'}\Big|\mathfrak{F}_t\Big]^{\frac{2}{r'}} \leq CT + Cr'!  \sup_{n\geq 1}\| Z^n*B \|_{BMO}^2 < \infty,
\end{align*}
where the last inequality follows from  energy inequalities and it is finite thanks to Lemma \ref{lemma 5.1}. Thus, 
\[ \sup_{n \in \mathbb{N}} \| Z^n\|_{\mathcal{S}^{\infty}} = \sup_{n \in \mathbb{N}} \| (D_tY^n_t)_{t\in [0,T]}\|_{\mathcal{S}^{\infty}} \leq C. \]
Since, the constant $C$ does not depend on the supremum norms of $\nabla_y g$ and $\nabla_z g$ respectively, then the uniform bound obtained above remains valid when $g$ is not continuously differentiable with respect to its spatial variables by means of standard approximation methods. This concludes the proof.
\end{proof}
\begin{proof}[Proof of Proposition \ref{prop 4.6}:]
From Lemma \ref{lemma 5.1} and Lemma \ref{lemm 4.7}, we can fix $n^{*}\in \mathbb{N}$ such that \[ n^{*} > \sup_{n \in \mathbb{N}}\essup_{\omega\in \Omega} \sup_{t \in [0,T]} \left( |Y_t^n| + |Z_t^n| \right) . \]
We observe $\mathrm{d}t\otimes\mathbb{P}\text{-a.s.}$ that:
\[ g_{n^{*}}\left(t, Y_t^{n^{*}}, Z_t^{n^{*}}\right) = g\left( t, \tilde{\rho}_{n^{*}}(Y_t^{n^{*}}),  {\rho}_{n^{*}}(Z_t^{n^{*}})  \right) = g\left(t, Y_t^{n^{*}}, Z_t^{n^{*}}\right)  \] 
Thus, $\left( Y^{n^{*}}, Z^{n^{*}} \right)$ solves the BSDE \eqref{eqmain1} in $\mathcal{S}^{\infty} \times \mathcal{S}^{\infty}$. This ends the proof.
\end{proof}

\subsection{Classical differentiability}\label{prelm}
Throughout this section, we consider the following parametrised BSDE 
\begin{equation}\label{bsde}
Y_t^x = \xi(x) + \int_{t}^{T} g(s,\omega,x, Y_s^x,Z_s^x)\mathrm{d}s - \int_{t}^{T} Z_s^x\mathrm{d}B_s,\quad t\in [0,T], x\in \mathbb{R}^m.
\end{equation}
We suppose the following assumption:
\begin{assum}\label{assum4}\leavevmode
\begin{itemize}
\item[(C1)] Let $m,d \in \mathbb{N}$. Let $g: [0,T]\times\Omega\times \mathbb{R}^m\times \mathbb{R}\times \mathbb{R}^d \rightarrow \mathbb{R}$ be an adapted measurable function, differentiable in the spatial variables with continuous partial derivatives in $y$ and $z$. There exist a positive process $(K_t(x))_{t\in [0,T]}$ depending on $x \in \mathbb{R}^m$ and  a locally bounded and non-decreasing function $f\in L^1_{loc}(\mathbb{R},\mathbb{R}_{+})$ such that for all $(t,x,y,z)\in [0,T]\times \mathbb{R}^m\times \mathbb{R}\times \mathbb{R}^d,$ $\alpha \in (0,1)$
\begin{align*}
	|g(t,x,y,z)|&\leq \Lambda_0 + \Lambda_y|y|+ \Lambda_z( |z| + f(|y|)|z|^2) \text{ a.s.,}\\
	|\nabla_x g(t,x,y,z)| &\leq K_t(x)(1 + |y|+ f(|y|)|z|^\alpha)\text{ a.s.,}\\
	|\nabla_y g(t,x,y,z)| &\leq \Lambda_y(1 +|z|^{\alpha}) \text{ a.s.,}\\
	|\nabla_z g(t,x,y,z)| &\leq \Lambda_z(1+ f(|y|)|z|)\text{ a.s.}
\end{align*}
Furthermore, the process $(K_t(x))_{t\in [0,T]}$ satisfies $\sup_{x\in \mathbb{R}^m } \int_{0}^{T} \mathbb{E}|K_s(x)|^{2p} \mathrm{d}s < \infty$ for any $p\geq 1.$
\item[(C2)] 
\begin{itemize}
	\item[(i)]
	For any $x\in \mathbb{R}^m$, the random variable $\xi(x)$ is $\mathfrak{F}_T\text{-adapted}$ and $\sup_{x\in \mathbb{R}^m}\| \xi(x)\|_{L^{\infty}(\Omega)} < \infty$  a.s.
	\item[(ii)] For all $p\geq 1$ the mapping $x\mapsto \xi(x)$ from $\mathbb{R}^m$ to $L^{2p}(\Omega)$ is differentiable and  $\sup_{x\in \mathbb{R}^m}\| \nabla_x\xi(x)\|_{L^{2p}(\Omega)} < \infty.$
	
\end{itemize}
\item[(C3)] The function $x\mapsto \nabla_x\xi(x)$ is continuous.
\end{itemize}
\end{assum}
Note that even if the process $(K_t(x))_{t\in [0,T]}$  does not satisfy one of the standard requirements in the literature (that is $\mathbb{E}\sup_{0\leq t \leq T}|K_t(x)|^p < \infty$), we can still use the H\"{o}lder  and the Minkowski's integral type inequalities to prove the desire result.

\begin{lemm}\label{lemma 4.3}
Suppose Assumption \ref{assum4} is valid. For all $p > 1$ and $i \in \{ 1,\ldots,m\}$ there exists $C > 0$ such that for all $x,x' \in \mathbb{R}^m$ and $h,h' \in \mathbb{R}$ for which $(x+he_i)$ and $(x'+h'e_i)$ belongs to $\mathbb{R}^m$ we have
\begin{align}\label{estim1}
&\mathbb{E} \Big[ \sup_{t\in [0,T]} |Y_t^{x+he_i} -Y_t^{x'+h'e_i}|^{2p} + \Big(  \int_{0}^{T}\!\! |Z_s^{x+he_i} -Z_s^{x'+h'e_i}|^2 \mathrm{d}s \Big)^p   \Big]\\
&\leq C (|x-x'|^2 + |h-h'|^2)^p,\notag
\end{align}
where $(Y^r,Z^r) $ is the solution to the BSDE \eqref{bsde} with parameter $r \in \{ x+ he_i, x'+h'e_i\}.$
\end{lemm}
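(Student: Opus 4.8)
The plan is to read off the estimate from the a priori bound of Lemma \ref{thap}, the only real work being to control the resulting terminal-value and generator increments in terms of $|x-x'|$ and $|h-h'|$. Write $r = x + he_i$ and $r' = x' + h'e_i$, so that $(Y^{x+he_i},Z^{x+he_i}) = (Y^r,Z^r)$ and likewise for $r'$. Since $r - r' = (x-x') + (h-h')e_i$ gives $|r-r'|^2 \le 2(|x-x'|^2 + |h-h'|^2)$, it suffices to prove \eqref{estim1} with right-hand side $C|r-r'|^{2p}$. For fixed parameter $r$, the BSDE \eqref{bsde} is a special case of \eqref{eqmain1} with generator $g(\cdot,r,\cdot,\cdot)$ and terminal value $\xi(r)$; the growth and gradient bounds in (C1) imply (via the mean value theorem in $y$ and $z$) that Assumption \ref{assum2} holds with $\Lambda_y,\Lambda_z$ and the function $f$, while (C2)(i) gives Assumption \ref{assum1}, all uniformly in $r$. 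Hence the $\mathcal{S}^{\infty}$ and $\mathcal{H}_{\text{BMO}}$ norms of $(Y^r,Z^r)$ and $(Y^{r'},Z^{r'})$ are bounded by $\Upsilon^{(1)}$ and $\Upsilon^{(2)}$ independently of $r,r'$.

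Applying Lemma \ref{thap} to the two solutions, with the exponent $m := 2pq$, reduces the problem to estimating the bracket
\[
\mathbb{E}\Big[|\xi(r)-\xi(r')|^{m} + \Big(\int_0^T|g(s,r,Y^{r'}_s,Z^{r'}_s) - g(s,r',Y^{r'}_s,Z^{r'}_s)|\,\mathrm{d}s\Big)^{m}\Big],
\]
which I claim is at most $C|r-r'|^{m}$. For the terminal term, writing $\xi(r)-\xi(r') = \int_0^1 \nabla_x\xi(r'+\theta(r-r'))\cdot(r-r')\,\mathrm{d}\theta$ and using Jensen's inequality in $\theta$ together with (C2)(ii) gives $\mathbb{E}|\xi(r)-\xi(r')|^{m} \le |r-r'|^{m}\sup_{x}\|\nabla_x\xi(x)\|_{L^{m}}^{m} \le C|r-r'|^{m}$. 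For the generator term I use the fundamental theorem of calculus in $x$ and the bound on $\nabla_x g$ from (C1), so the integrand is dominated by $|r-r'|\,K^*_s\,G_s$, where $K^*_s := \int_0^1 K_s(r'+\theta(r-r'))\,\mathrm{d}\theta$ and $G_s := 1+|Y^{r'}_s|+f(|Y^{r'}_s|)|Z^{r'}_s|^{\alpha}$. Two applications of the Cauchy--Schwarz inequality, first in $\mathrm{d}s$ and then in $\Omega$, reduce matters to bounding $\mathbb{E}\big(\int_0^T|K^*_s|^2\,\mathrm{d}s\big)^{m}$ and $\mathbb{E}\big(\int_0^T G_s^2\,\mathrm{d}s\big)^{m}$ separately. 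The latter is uniform: since $\|Y^{r'}\|_{\mathcal{S}^{\infty}}\le\Upsilon^{(1)}$ we have $f(|Y^{r'}|)\le\varphi(\Upsilon^{(1)})$, and $|Z^{r'}|^{2\alpha}\le C_\alpha+|Z^{r'}|^2$, so $G_s^2\le C(1+|Z^{r'}_s|^2)$, and the energy inequality (or Lemma \ref{thap0}) gives $\mathbb{E}\big(\int_0^T|Z^{r'}_s|^2\mathrm{d}s\big)^{m}<\infty$ uniformly in $r'$ thanks to the bound $\Upsilon^{(2)}$.

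The factor involving $K$ is where the difficulty flagged in the remark preceding the lemma lies: we control only $\sup_x\int_0^T\mathbb{E}|K_s(x)|^{2p}\,\mathrm{d}s$ for every $p$, and not a supremum in time, so the naive bound is unavailable. Here I would apply Jensen's inequality twice, once in $\theta$ and once to move the outer $m$-th power inside the finite-measure integral $\mathrm{d}s$ at the cost of a factor $T^{m-1}$, and then Tonelli's theorem to interchange the order of integration, arriving at
\[
\mathbb{E}\Big(\int_0^T|K^*_s|^2\,\mathrm{d}s\Big)^{m} \le T^{m-1}\int_0^1\Big(\int_0^T\mathbb{E}|K_s(r'+\theta(r-r'))|^{2m}\,\mathrm{d}s\Big)\mathrm{d}\theta \le T^{m-1}\sup_{x}\int_0^T\mathbb{E}|K_s(x)|^{2m}\,\mathrm{d}s,
\]
which is finite and uniform in $r,r'$ by the integrability hypothesis on $K$ in (C1). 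Combining the three bounds shows the bracket in Lemma \ref{thap} is at most $C|r-r'|^{m}$; raising to the power $1/q$ and using $|r-r'|^{2p}\le 2^p(|x-x'|^2+|h-h'|^2)^p$ completes the proof. The only genuinely delicate point is this last interchange, which must be arranged so as to use the time-averaged integrability of $K$ rather than an unavailable uniform-in-time moment bound.
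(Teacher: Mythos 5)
Your proposal is correct and follows essentially the same route as the paper: both reduce the claim to the stability estimate of Lemma \ref{thap}, represent the terminal and generator increments via the mean value theorem along the segment joining $x+he_i$ and $x'+h'e_i$, and then exploit the uniform bounds $\Upsilon^{(1)},\Upsilon^{(2)}$ together with the time-integrated moment condition on $K$ from (C1) rather than any (unavailable) sup-in-time moment. The only divergence is cosmetic: where the paper separates the product of $\int_0^1 K_s(\cdot)\,\mathrm{d}\theta$ and $1+|Y_s|+f(|Y_s|)|Z_s|^{\alpha}$ by H\"{o}lder's inequality with exponents $1/\alpha$ and $1/(1-\alpha)$ followed by Minkowski's integral inequality, you use Cauchy--Schwarz plus Jensen and Tonelli, which works equally well since $\sup_{x}\int_0^T\mathbb{E}|K_s(x)|^{2p}\,\mathrm{d}s<\infty$ for every $p\geq 1$.
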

\begin{remark} 
It follows rom the Kolmogorov's continuity criterion,for $0\leq t \leq T$ the mapping $x\mapsto Y^x_t$ has a continuous version for which almost all sample paths are $\beta\text{-H\"{o}lder}$ continuous in $\mathbb{R}^m$ for any $\beta \in (0,1).$ For $(t,x)\in [0,T]\times\mathbb{R}^m$ the mapping $t\mapsto Y_t^x(\omega)$ is continuous $\mathbb{P}\text{-a.s. } \omega \in \Omega$. This is a necessary condition to obtain a classical differentiability result for the solution process $(Y^x,Z^x)$ to the BSDE \eqref{bsde} under Assumption \ref{assum4}.
\end{remark}
\begin{proof}\leavevmode
Set $\bar{x}=x-x', \bar{h}=h-h'$ and define the following processes: $\delta Y_t := (Y_t^{x+he_i}- Y_t^{x'+h'e_i})$, $\delta Z_t := (Z_t^{x+he_i}- Z_t^{x'+h'e_i})$ and $\delta \xi := (\xi(x+he_i)- \xi(x'+h'e_i))$. From assumption (C1) $(\delta Y, \delta Z)$ satisfies the equation
\begin{align*}
\delta Y_t = \delta \xi - \int_{t}^{T}\left[ I_s^y(\bar{x} + \bar{h}e_i) + I_s^x\delta Y_s + I_s^z \delta Z_s \right]\mathrm{d}s -\int_{t}^{T} \delta Z_s \mathrm{d}B_s,
\end{align*}
where the processes $I^x,I^y$ and $I^z$ are given by
\begin{align*}
I_s^x &= \int_{0}^{1} (\nabla_x g)(s,x+he_i+\theta(\bar{x} + \bar{h} e_i),Y_s^{x+he_i},Z_s^{x+he_i})\mathrm{d}\theta,\\
I_s^y &= \int_{0}^{1} (\nabla_y g)(s,x'+h'e_i ,Y_s^{x+he_i}-\theta \delta Y_s,Z_s^{x+he_i})\mathrm{d}\theta,\\
I_s^z &= \int_{0}^{1} (\nabla_z g)(s,x'+h'e_i ,Y_s^{x+he_i},Z_s^{x+he_i}-\theta \delta Z_s)\mathrm{d}\theta.
\end{align*}
Using (C1), the following bounds can be obtained:
\begin{align*}
|I_t^x| &\leq \int_{0}^{1} \left[ (1 + |Y_t^{x+he_i}| + [f(|Y_t^{x+he_i}|)|Z_t^{x+he_i}|]^{\alpha}) K_t(x+he_i +\theta(\bar{x}+\bar{h}e_i)) \right]\mathrm{d}\theta,\\
|I_t^y| &\leq \Lambda_y  (1 + |Z_t^{x+he_i}|^{\alpha} ),\\
|I_t^z|&\leq \Lambda_z \big( 1+ f(|Y_t^{x+he_i}|)(|Z_t^{x+he_i}| + |Z_t^{x'+h'e_i}|)\big).
\end{align*}
Observe that $(I^z*B)$ is a BMO martingale and 
\[ \sup_{\tiny x\in \mathbb{R}^m, h\in \mathbb{R}} (\|I^z*B\|_{BMO})\leq \sup_{\tiny x\in \mathbb{R}^m, h\in \mathbb{R}} (\|Z^{x+he_i}*B\|_{BMO} + \|Z^{x}*B\|_{BMO}) <\infty. \] 
Moreover, from (C2), $\delta \xi$ is bounded. Thus,
Lemma \ref{thap} (see also \cite[Lemma A 1]{ImkRevRich}) yields that for any $p> 1$:
\begin{align}\label{est1}
\|\delta Y\|_{\mathcal{S}^{2p}}^{2p}  + \|\delta Z\|_{\mathcal{H}^{2p}}^{2p} \leq C\mathbb{E} \Big[ |\delta \xi|^{2pq} + \Big( \int_{0}^{T} |I_s^x|(|x-x'|+|h-h'|) \mathrm{d}s\Big)^{2pq}  \Big]^{\frac{1}{q}} ,
\end{align}
where $q\in (1,\infty).$ 
From (C2) and the mean value theorem, we deduce the existence of a constant $C>0$ such that
\[ \mathbb{E}|\delta \xi|^{2pq} \leq C (|x-x'|^2 + |h-h'|^2)^{pq} ,\] 
We now focus on the second term of  \eqref{est1}. For simplicity, we will write $K_t^{\bar{x},\bar{h}}(\theta):= K_t(x+he_i +\theta(\bar{x}+\bar{h}e_i))$. For any $\gamma \geq 2$, using the H\"{o}lder inequality and Minkowski's integral type inequality we have 
\begin{align*}
&\Big( \int_{0}^{T} |I_t^x|\mathrm{d}t \Big)^{\gamma}\\
\leq & \Big( \int_{0}^{T} (1 + |Y_t^{x+he_i}| + f(|Y_t^{x+he_i}|)|Z_t^{x+he_i}|^{\alpha})  \int_{0}^{1}  K_t^{\bar{x},\bar{h}}(\theta)\mathrm{d}\theta \mathrm{d}t \Big)^{\gamma}\\
\leq &\Big(\int_{0}^{T} \big(1 + |Y_t^{x+he_i}| + f(|Y_t^{x+he_i}|)|Z_t^{x+he_i}|^{\alpha}\big)^{\frac{1}{\alpha}}\mathrm{d}t \Big)^{\alpha\gamma}  \Big( \int_{0}^{T}\Big(\int_{0}^{1}  K_t^{\bar{x},\bar{h}}(\theta) \mathrm{d}\theta\Big)^{\frac{1}{1-\alpha}} \mathrm{d}t \Big)^{\gamma(1-\alpha)}\\
\leq &C \Big(\int_{0}^{T} \big(1 + |Y_t^{x+he_i}|^{\frac{1}{\alpha}} +\sup_{0\leq y \leq 2\Upsilon^{(1)}}\varphi^{\frac{1}{\alpha}}(y)|Z_t^{x+he_i}|^2\big)\mathrm{d}t \Big)^{\alpha\gamma} \\
& \times \Big( \int_{0}^{1}\Big(\int_{0}^{T}  |K_t^{\bar{x},\bar{h}}(\theta)|^{\frac{1}{1-\alpha}} \mathrm{d}t\Big)^{(1-\alpha)} \mathrm{d}\theta \Big)^{\gamma}.
\end{align*}
Taking the expectation on both sides of the above inequality and applying the H\"{o}lder's inequality, we have
\begin{align*}
&\mathbb{E}\Big( \int_{0}^{T} |I_t^x|\mathrm{d}t \Big)^{\gamma} \\
\leq &C \Big[ \mathbb{E}\Big(\int_{0}^{T} \big(1 + |Y_t^{x+he_i}|^{\frac{1}{\alpha}} + \sup_{0\leq y \leq 2\Upsilon^{(1)}}\varphi^{\frac{1}{\alpha}}(y)|Z_t^{x+he_i}|^2\big)\mathrm{d}t\Big)^{\gamma} \Big]^{\alpha}\\
&\times \Big[ \mathbb{E}\Big( \int_{0}^{1}\Big(\int_{0}^{T}  |K_t^{\bar{x},\bar{h}}(\theta)|^{\frac{1}{1-\alpha}} \mathrm{d}t\Big)^{(1-\alpha)} \mathrm{d}\theta \Big)^{\frac{\gamma}{1-\alpha}}\Big]^{(1-\alpha)} \\
\leq &C \Big[ \mathbb{E}\Big(\int_{0}^{T} \big(1 + |Y_t^{x+he_i}|^{\frac{1}{\alpha}} + \sup_{0\leq y \leq 2\Upsilon^{(1)}}\varphi^{\frac{1}{\alpha}}(y)|Z_t^{x+he_i}|^2\big)\mathrm{d}t\Big)^{\gamma} \Big]^{\alpha} \\
&\times\Big[ \mathbb{E} \int_{0}^{1}\Big(\int_{0}^{T}  |K_t^{\bar{x},\bar{h}}(\theta)|^{\frac{1}{1-\alpha}} \mathrm{d}t\Big)^{(1-\alpha)\cdot{\frac{\gamma}{1-\alpha}}}\mathrm{d}\theta\Big]^{(1-\alpha)} \\
\leq &C T^{\frac{1}{(1-\gamma)}}\Big[ \mathbb{E}\Big(\int_{0}^{T} \big(1 + |Y_t^{x+he_i}|^{\frac{1}{\alpha}} + \sup_{0\leq y \leq 2\Upsilon^{(1)}}\varphi^{\frac{1}{\alpha}}(y)|Z_t^{x+he_i}|^2\big)\mathrm{d}t\Big)^{\gamma} \Big]^{\alpha}\\
&\times \Big[ \int_{0}^{1}\int_{0}^{T}  \mathbb{E}|K_t^{\bar{x},\bar{h}}(\theta)|^{\frac{\gamma}{1-\alpha}} \mathrm{d}t \mathrm{d}\theta \Big]^{(1-\alpha)}.
\end{align*}
Hence, we deduce that 
\begin{align*}
&\sup_{\tiny x\in \mathbb{R}^m, h\in \mathbb{R}}\mathbb{E}\Big( \int_{0}^{T} |I_t^x|\mathrm{d}t \Big)^{\gamma}\\
\leq & C \sup_{\tiny x\in \mathbb{R}^m, h\in \mathbb{R}} \Big(1 + \|Y^{x+he_i} \|_{\mathcal{S}^{\infty}}^{\frac{\gamma}{\alpha}} +  \|\sqrt{\sup_{0\leq y \leq 2\Upsilon^{(1)}}\varphi^{\frac{1}{2\alpha}}(y)}Z^{x+he_i} \|_{\mathcal{H}^{\gamma}}^{2\gamma\alpha}  \Big)\\
&\times \sup_{\tiny r\in \mathbb{R}^m} \Big(\int_{0}^{T}  \mathbb{E}|K_t(r)|^{\frac{\gamma}{1-\alpha}} \mathrm{d}t\Big)^{(1-\alpha)}<\infty.
\end{align*}
The proof is completed.
\end{proof}

It is then possible to identify the couple process $(\nabla Y^x, \nabla Z^x )$ as the derivatives of $(Y^x,Z^x)$ under Assumption \ref{assum4} such that the following BSDE 
\begin{align}\label{bsde1}
\nabla Y_t^x &= \nabla_x \xi(x) - \int_{t}^{T}\!\! \nabla_x Z_s^x\mathrm{d}B_s\\
& + \int_{t}^{T}\!\!  \Big( \nabla_x g(s,\Theta_s^x) + \nabla_y g(s,\Theta_s^x) \nabla Y^x_s + \nabla_z g(s,\Theta_s^x) \nabla Z^x_s\Big) \mathrm{d}s ,\notag
\end{align}
makes sense for all $x\in \mathbb{R}^m$, $t\in [0,T],$ where $\Theta_{\cdot}^x = (x,Y_{\cdot}^x, Z_{\cdot}^x)$.

We only prove the differentiability of $(Y^x,Z^x)$ with respect to the natural topological structure on the Banach space $\mathcal{S}^{2p}(\mathbb{R})\times \mathcal{H}^{2p}(\mathbb{R}^d)$, $p> 1$. Under some additional assumptions (for example twice differentiability of the parameters) the pathwise differentiablity of the map $x\mapsto (Y_t^x(\omega),Z_t^x(\omega))$ for almost all $(\omega,t) \in \Omega\times[0,T]$ can be obtained without major difficulties.

The main result of this subsection is the following 
\begin{thm}[Differentiability]\label{diffY} Suppose the coefficients of the BSDE \eqref{bsde} satisfy Assumption \ref{assum4}. Then for any parameter $x\in \mathbb{R}^n$ and all $p> 1$ the solution function: $\mathbb{R}^m\rightarrow \mathcal{S}^{2p} \times \mathcal{H}^{2p}$,  $x\mapsto (Y^x,Z^x)$ is differentiable in the norm topology and the couple (derivatives) $x\mapsto (\nabla Y^x,\nabla Z^x)$ is the unique solution to the BSDE \eqref{bsde1}.
In particular for $x,x' \in \mathbb{R}^m$ we have 
\[  
\lim_{x\rightarrow x'} \left\{ \| \nabla_x Y^{x}_t - \nabla_x Y^{x'}_t  \|_{\mathcal{S}^{2p}}^{2p} + \| \nabla_x Z^{x}_t - \nabla_x Z^{x'}_t  \|_{\mathcal{H}^{2p}}^{2p}  \right\} = 0.  
\]	
\end{thm}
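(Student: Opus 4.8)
The plan is to carry out the classical difference-quotient argument (in the spirit of \cite{ArImDR, ImkDos}), but entirely inside the stochastic-Lipschitz a priori framework furnished by Lemma~\ref{thap}. As a preliminary step I would note that the candidate equation \eqref{bsde1} is well posed: its generator $F(s,u,v)=\nabla_x g(s,\Theta_s^x)+\nabla_y g(s,\Theta_s^x)u+\nabla_z g(s,\Theta_s^x)v$ is affine in $(u,v)$, and the bounds in (C1) give $|\nabla_y g(s,\Theta_s^x)|\leq\Lambda_y(1+|Z_s^x|^{\alpha})$ and $|\nabla_z g(s,\Theta_s^x)|\leq\Lambda_z(1+f(|Y_s^x|)|Z_s^x|)$, so that $\nabla_y g*B$ and $\nabla_z g*B$ are BMO martingales because $Z^x\in\mathcal{H}_{\mathrm{BMO}}$. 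Combined with $\nabla_x\xi(x)\in L^{2p}$ from (C2) and $\int_0^T|\nabla_x g(s,\Theta_s^x)|\,\mathrm{d}s\in L^{2p}$ from (C1) and the $\mathcal{S}^{\infty}\times\mathcal{H}_{\mathrm{BMO}}$ bounds on $(Y^x,Z^x)$, the existence, uniqueness and a priori estimate for stochastic-Lipschitz BSDEs apply to \eqref{bsde1}.

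Next I would fix $x\in\mathbb{R}^m$, a coordinate direction $e_i$ and $h\neq 0$, and introduce the difference quotients $U^h:=h^{-1}(Y^{x+he_i}-Y^x)$ and $V^h:=h^{-1}(Z^{x+he_i}-Z^x)$. Writing the $g$-increment through the fundamental theorem of calculus produces a stochastic-Lipschitz BSDE for $(U^h,V^h)$ with terminal value $h^{-1}\delta\xi$, where $\delta\xi:=\xi(x+he_i)-\xi(x)$, and driver $I_s^{x,h}e_i+I_s^{y,h}U_s^h+I_s^{z,h}V_s^h$; here $I^{x,h},I^{y,h},I^{z,h}$ are the gradients of $g$ averaged along the segment joining $(x,Y^x,Z^x)$ to $(x+he_i,Y^{x+he_i},Z^{x+he_i})$, and by (C1) they satisfy the same growth and BMO bounds as the true gradients, uniformly in $h$. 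Subtracting \eqref{bsde1}, the error process $(\bar U^h,\bar V^h):=(U^h-\nabla_{x_i}Y^x,\,V^h-\nabla_{x_i}Z^x)$ then solves a stochastic-Lipschitz BSDE with coefficients $I^{y,h},I^{z,h}$, terminal value $h^{-1}\delta\xi-\nabla_{x_i}\xi(x)$ and free term
\[
\Xi_s^h=\bigl(I_s^{x,h}e_i-\nabla_{x_i}g(s,\Theta_s^x)\bigr)+\bigl(I_s^{y,h}-\nabla_y g(s,\Theta_s^x)\bigr)\nabla_{x_i}Y_s^x+\bigl(I_s^{z,h}-\nabla_z g(s,\Theta_s^x)\bigr)\nabla_{x_i}Z_s^x.
\]

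Applying Lemma~\ref{thap} to this error BSDE bounds $\|\bar U^h\|_{\mathcal{S}^{2p}}^{2p}+\|\bar V^h\|_{\mathcal{H}^{2p}}^{2p}$ by $C\,\mathbb{E}[\,|h^{-1}\delta\xi-\nabla_{x_i}\xi|^{2pq}+(\int_0^T|\Xi_s^h|\,\mathrm{d}s)^{2pq}\,]^{1/q}$, with $q$ depending only on $T$ and the (uniformly bounded) BMO norms. The terminal contribution tends to $0$ by the $L^{2p}$-differentiability of $\xi$ in (C2), and the driver contribution is handled by dominated convergence: continuity of $\nabla_x g,\nabla_y g,\nabla_z g$ together with the modulus estimate of Lemma~\ref{lemma 4.3} (which yields $Y^{x+he_i}\to Y^x$ in $\mathcal{S}^{2p}$ and $Z^{x+he_i}\to Z^x$ in $\mathcal{H}^{2p}$) forces $I^{x,h}\to\nabla_{x_i}g$, $I^{y,h}\to\nabla_y g$ and $I^{z,h}\to\nabla_z g$ along the paths, so that $\int_0^T|\Xi_s^h|\,\mathrm{d}s\to 0$ in $L^{2pq}$. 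This gives $(U^h,V^h)\to(\nabla_{x_i}Y^x,\nabla_{x_i}Z^x)$ in $\mathcal{S}^{2p}\times\mathcal{H}^{2p}$, hence differentiability with derivative solving \eqref{bsde1}. The continuity assertion in the final display then follows from one further application of Lemma~\ref{thap} to the difference of \eqref{bsde1} taken at $x$ and $x'$, invoking (C3), the continuity of the coefficients, and the continuity of $(Y^x,Z^x)$ in $x$ already recorded in Lemma~\ref{lemma 4.3}.

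I expect the genuinely delicate step to be the $L^{2pq}$-convergence of the free term $\Xi^h$, and in particular of the $z$-gradient contribution $(I^{z,h}-\nabla_z g)\nabla_{x_i}Z^x$: one multiplies a factor of quadratic-type $z$-growth (controlled only in BMO) by the derivative process $\nabla_{x_i}Z^x$, and must exhibit an $h$-independent dominating function while keeping the H\"older exponent $q$ tied solely to the uniform BMO bounds. The merely integrated control on $K_t(x)$ in (C1) compounds this, forcing the Minkowski-integral manipulation already used in Lemma~\ref{lemma 4.3} rather than a crude supremum bound.
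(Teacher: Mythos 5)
Your proposal is correct and follows essentially the same route as the paper: the paper's proof is a sketch of exactly this difference-quotient argument (deferring the details to \cite{ArImDR, ImkDos, DosReis}), resting on the uniform $\mathcal{S}^{\infty}\times\mathcal{H}_{\mathrm{BMO}}$ bounds, the stability estimate of Lemma~\ref{lemma 4.3} (itself an application of Lemma~\ref{thap}), convergence of the quotients to the candidate solution of \eqref{bsde1}, and a final continuity step via (C3) and the continuity of the gradients of $g$. You have merely written out the linearization, the error BSDE, and the Vitali/dominated-convergence handling of the free term $\Xi^h$ that the paper cites rather than reproduces, and your identification of the $z$-gradient contribution with uniform-in-$h$ BMO control of the exponent $q$ as the delicate point is consistent with how the paper's own Lemma~\ref{lemma 4.3} handles the analogous terms.
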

The proof of Theorem \ref{diffY} follows from {well known} techniques related to differentiability of BSDEs with drivers that grow quadratically in the control variable. We refer the reader for example to \cite{ArImDR, FreiDosReis, ImkDos, DosReis}. For the sake of better understanding, we sketch the proof below

\begin{proof}[Proof of Theorem \ref{diffY}] 
We start by observing the following: Assumption (H1) guarantees the existence of a maximal solution $(Y^x,Z^x)$ to the BSDE \eqref{bsde} in  $\mathcal{S}^{\infty}\times\mathcal{H}_{BMO}$ such that the norms of $Y^x$ (resp. $Z^x$) in $\mathcal{S}^{\infty}$ (resp. $\mathcal{H}_{BMO}$) are uniformly bounded in $x$. Using Lemma \ref{lemma 4.3} we also deduce that for all $p >1$, $x\in \mathbb{R}^m$, $h,h'\in \mathbb{R}$ and $i\in \{ 1,\cdots,m\}$
\[  \lim_{h\rightarrow 0}\left\{  \Big\Vert {Y_t^{x+he_i}-Y_t^x} \Big\Vert_{\mathcal{S}^{2p}} + \Big\Vert {Z_t^{x+he_i}-Z_t^x}\Big\Vert_{\mathcal{H}^{2p}}  \right\} = 0 , \]	
Then, for any candidate $(\nabla_{x_i}Y^x_t,\nabla_{x_i}Z^x_t)$ for the partial derivatives satisfying BSDE \eqref{bsde1}, the following can be achieved as in \cite[P33]{DosReis}:  
\[  \lim_{h\rightarrow 0}\Big\{  \Big\Vert \frac{Y_t^{x+he_i}-Y_t^x}{h}- \nabla_{x_i}Y^x_t \Big\Vert_{\mathcal{S}^{2p}} + \Big\Vert \frac{Z_t^{x+he_i}-Z_t^x}{h}- \nabla_{x_i}Z^x_t \Big\Vert_{\mathcal{H}^{2p}}  \Big\} = 0 .\]	
The above guarantees the existence of partial derivatives of $(Y^x,Z^x)$. Using Lemma \ref{lemma 4.3} once more, and the continuity of the derivatives of $g$, it can be shown that $(\nabla_{x_i} Y^x, \nabla_{x_i} Z^x)$ are continuous with respect to any $x\in \mathbb{R}^m$. This concludes the proof.
\end{proof}

\section{Differentiability of quadratic FBSDEs with rough drift}\label{Appli}

In this section, we study the smoothness properties of the solution $\boldsymbol{X}^x= (X^x,Y^x,Z^x)$ of the following FBSDE
\begin{align}
X_t^x =& x + \int_{0}^{t} b(s,X_s^x)\mathrm{d}s + B_t, \label{sde}\\
Y_t^x = &\phi(X_T^x) + \int_{t}^{T} g(s,X_s^x,Y_s^x,Z_s^x)\mathrm{d}s - \int_{t}^{T} Z_s^x\mathrm{d}B_s,\label{bsde3}
\end{align}
where the functions $b, \phi$ and $g$ are both deterministic explicit functional that are Borel measurables and satisfying some conditions that will be made precise below. 

In the sequel, the driver $g$ and the terminal value $\phi$ satisfy the following assumptions:
\begin{itemize}
\item[(AY):] The function $\phi: \mathbb{R}^d\rightarrow \mathbb{R}$ is continuous, measurable and uniformly bounded; $g: [0,T]\times \mathbb{R}^d\times \mathbb{R}\times \mathbb{R}^d \rightarrow \mathbb{R}  $ is a measurable function satisfying: $\|g(t,0,0,0)\|_{\infty} \leq \Lambda_0 $ and there exist non negative constants $\Lambda_x,\Lambda_y\Lambda_z$ and $\Lambda_{\phi}$ such that for all $(t,x,y,z)\in [0,T]\times \mathbb{R}^d\times \mathbb{R}\times \mathbb{R}^d,$ $(t,x',y',z')\in [0,T]\times \mathbb{R}^d\times \mathbb{R}\times \mathbb{R}^d,$ $\alpha \in (0,1)$
\begin{align*}
|g(t,x,y,z)-g(t,x',y,z)| &\leq \Lambda_x (1 + |y|+f(|y|)|z|^\alpha)|x-x'|, \\
|g(t,x,y,z)-g(t,x,y',z')| &\leq \Lambda_y (1 +(|z|^{\alpha}+|z'|^{\alpha}))|y-y'| \\
& + \Lambda_z(1+ (f(|y|)+f(|y'|))(|z|+|z'|)) |z-z'|,\\
| \phi(x)-\phi(x')|&\leq \Lambda_{\phi}|x-x'|
\end{align*}
where $f\in L^1_{loc}(\mathbb{R},\mathbb{R}_{+})$ is locally bounded and non-decreasing.
\item[(AY1):] The functions $\phi$ and $g$ are continuously differentiable in $(x,y,z)$ 
\end{itemize}
\subsection{The case of SDEs with bounded drift}
In this section, we assume that the drift $b$ in the forward SDE \eqref{sde} satisfies
\begin{itemize}
\item[(AX):] $b: [0,T]\times \mathbb{R}^d\rightarrow \mathbb{R}^d$ is uniformly bounded and Borel measurable.
\end{itemize}

It is well known that under (AX), the SDE \eqref{sde} has a unique strong Malliavin and Sobolev differentiable solution (see for example \cite{ MMNPZ13, NilssenProske}). Let us recall the following results whose proofs can be found in \cite{ MMNPZ13, NilssenProske}.
\begin{thm}\label{thmpr1}Suppose (AX) is valid. 
\begin{itemize}
\item[(i)]	The SDE \eqref{sde} has a unique strong solution $ X^x_t\in L^2(\Omega;W_{loc}^{1,2}(\mathbb{R}^d))$. Moreover, for all $s,t \in [0,T]$, $x,y \in \mathbb{R}$ it holds
\begin{align}\label{eq 6.3}
	\mathbb{E}\left[|X_t^x - X_s^y|^2\right] \leq C(\|b\|_{\infty})\left( |t-s| + |x-y|^2  \right).
\end{align}
\item[(ii)] The strong solution $X^x$ is Malliavin differentiable and for $0\leq s\leq t \leq T$, the Malliavin derivative $D_s X_t^x$ satisfies
\begin{align}
	\sup_{x\in \mathbb{R}}\sup_{s\in [0,t]} \mathbb{E}\left[ | D_s X_t^x |^2  \right] &\leq C_{d}(\|b\|_\infty),\label{eq 6.80}
\end{align}
 where $C$ is an increasing function of $\|b\|_\infty$ depending only on $d$. Moreover, the following relation holds
\begin{equation}\label{eq 6.7}
	D_sX_t^x \left( \nabla_x X_s^x\right) = \nabla_x X_t^x \quad \mathrm{d}t\otimes\mathbb{P}\text{-a.s.},
\end{equation}
where $\nabla_x X_t^x $ denotes the first variation  of the process $x\mapsto X^x$.
\end{itemize}
\end{thm}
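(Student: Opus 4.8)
The plan is to obtain Theorem \ref{thmpr1} via the variational construction of \cite{MMNPZ13, NilssenProske}, which exploits the regularising effect of the Brownian motion to compensate for the complete absence of Lipschitz control on $b$. First I would approximate the bounded Borel drift by a sequence of smooth, uniformly bounded vector fields $b_n$ with $\sup_n \|b_n\|_\infty \leq \|b\|_\infty$ and $b_n \to b$ almost everywhere. For each $n$ the classical theory furnishes a unique strong solution $X^{x,n}$ that is both Sobolev and Malliavin differentiable, whose first variation $\nabla_x X^{x,n}_t$ and Malliavin derivative $D_s X^{x,n}_t$ solve the linear variation equations obtained by formally differentiating \eqref{sde}. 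Since the diffusion coefficient is the identity, $D_s X^{x,n}_t = I + \int_s^t \nabla b_n(u,X^{x,n}_u)\,D_s X^{x,n}_u\,\mathrm{d}u$ for $s\le t$, and $\nabla_x X^{x,n}_t$ solves the same linear equation started from the identity at time $0$.

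The core of the argument is a collection of bounds on these approximants that are uniform in $n$. Because $\sigma\equiv I$, Girsanov's theorem lets me rewrite expectations of functionals of $X^{x,n}$ as Wiener functionals of $x+B$ weighted by the Dol\'eans--Dade exponential $\mathcal{E}(\int_0^{\cdot} b_n(s,x+B_s)\,\mathrm{d}B_s)$, whose moments are controlled solely through $\|b\|_\infty$ by Novikov's condition. The decisive step is then an integration-by-parts (duality) formula that transfers the spatial, respectively Malliavin, derivative off the rough coefficient $b_n$ and onto the Gaussian density; this removes the dependence on $\nabla b_n$, which is not bounded uniformly in $n$, and yields estimates of the form $\sup_n \sup_{x,s}\mathbb{E}\big[|\nabla_x X^{x,n}_t|^2 + |D_s X^{x,n}_t|^2\big]\leq C_d(\|b\|_\infty)$ with $C_d$ increasing in $\|b\|_\infty$.

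With these uniform bounds in hand I would pass to the limit. The moment estimate $\sup_n \mathbb{E}[|X^{x,n}_t - X^{y,n}_s|^2]\leq C(\|b\|_\infty)(|t-s|+|x-y|^2)$ follows from the Girsanov representation together with the Hölder regularity of $t\mapsto x+B_t$ (contributing the linear power of $|t-s|$) and the Lipschitz dependence on $x$ of the exponential weight; this estimate is inherited by the limit and yields \eqref{eq 6.3}, and via the Kolmogorov criterion a jointly continuous modification. To produce the strong solution itself I would invoke a Malliavin-calculus compactness criterion in the spirit of Lemma \ref{lemma 1.2.3}, strengthened so as to give relative compactness in $L^2(\Omega)$: the uniform $L^2$-bound on $D X^{x,n}_t$, refined by a modulus-of-continuity estimate in the Malliavin variable, makes $\{X^{x,n}_t\}_n$ relatively compact, so a subsequence converges to a limit $X^x_t$. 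One then checks that $X^x$ solves \eqref{sde}, is adapted to the Brownian filtration (hence a genuine strong solution) and lies in $L^2(\Omega;W^{1,2}_{loc}(\mathbb{R}^d))$, while uniqueness comes from the pathwise uniqueness that Girsanov provides for bounded drift. Lemma \ref{lemma 1.2.3} simultaneously identifies $D_s X^x_t$ as the weak limit of $D_s X^{x,n}_t$, giving Malliavin differentiability and the bound \eqref{eq 6.80}.

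Finally I would establish the flow relation \eqref{eq 6.7}. For smooth $b_n$ both $\nabla_x X^{x,n}_t$ and $D_s X^{x,n}_t$ solve the same linear equation $\mathrm{d}\Phi_u = \nabla b_n(u,X^{x,n}_u)\Phi_u\,\mathrm{d}u$, the former started from the identity at time $0$ and the latter from the identity at time $s$; the cocycle property of the associated fundamental solution gives $D_s X^{x,n}_t\,(\nabla_x X^{x,n}_s) = \nabla_x X^{x,n}_t$. Passing to the limit along the convergent subsequence yields \eqref{eq 6.7} $\mathrm{d}t\otimes\mathbb{P}$-a.s. The main obstacle throughout is precisely the derivation of the $n$-uniform Sobolev and Malliavin estimates: a direct Gronwall argument is unavailable because $\nabla b_n$ blows up as $n\to\infty$, and everything hinges on the integration-by-parts step that trades differentiation of the irregular drift for the smoothness of the Gaussian kernel.
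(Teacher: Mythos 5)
Your proposal is correct and follows essentially the same route the paper itself relies on: the paper gives no proof of Theorem \ref{thmpr1} but quotes it from \cite{MMNPZ13, NilssenProske}, and your sketch --- smooth approximation of the bounded Borel drift with $\sup_n\|b_n\|_\infty\leq\|b\|_\infty$, Girsanov reduction with Novikov's condition, $n$-uniform Sobolev and Malliavin estimates obtained by shifting derivatives off $\nabla b_n$ onto the Gaussian kernel, the Malliavin compactness criterion strengthened by a modulus of continuity in the derivative variable, weak-limit identification of $D_sX_t^x$ in the spirit of Lemma \ref{lemma 1.2.3}, and the cocycle identity for the linearised equations yielding \eqref{eq 6.7} --- is precisely the argument of those references. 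Two small repairs if you write it out in full: the spatial part of \eqref{eq 6.3} should come from your uniform bound on $\mathbb{E}\bigl[|\nabla_x X_t^{x,n}|^2\bigr]$ via the mean value theorem rather than from any ``Lipschitz dependence on $x$ of the exponential weight'' (the Dol\'eans--Dade density is not Lipschitz in $x$ uniformly in $n$, since its $x$-derivative involves $\nabla b_n$), and Girsanov by itself only yields uniqueness in law, so pathwise uniqueness requires the additional standard step that every solution is strong and given by the same Wiener functional, as carried out in \cite{MMNPZ13}.
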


Combining the results in \cite{ MMNPZ13, NilssenProske,MiNuaSan89}, we obtain the following
\begin{lemm}
The strong solution $X^x$ is Malliavin differentiable and for $0\leq s\leq t \leq T$, the Malliavin derivative $D_s X_t^x$ satisfies
\begin{align}
	\sup_{x\in \mathbb{R}}\sup_{s\in [0,t]} \mathbb{E}\left[ | D_s X_t^x |^p  \right] &\leq C_{d,p}(\|b\|_\infty)\label{eq 6.8}
\end{align}
for all $p \geq 1$, where $C$ is an increasing function of $\|b\|_\infty$ depending only on $d$ and $p$. 
\end{lemm}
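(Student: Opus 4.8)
The plan is to upgrade the $L^2$ estimate \eqref{eq 6.80} of Theorem \ref{thmpr1} to an $L^p$ estimate for every $p\geq 1$, following the scheme of \cite{MMNPZ13, NilssenProske, MiNuaSan89}. First I would regularise the drift: choose a sequence of smooth, uniformly bounded vector fields $(b_n)_{n\geq 1}$ (e.g. mollifications) with $\|b_n\|_\infty\leq \|b\|_\infty$ and $b_n\to b$ in $L^p_{loc}$, and let $X^{x,n}$ solve \eqref{sde} with drift $b_n$. For smooth $b_n$ the stochastic flow is differentiable and the Malliavin derivative solves the linear variational equation, so that for $0\leq s\leq t\leq T$
\begin{equation*}
D_sX_t^{x,n}=I_d+\int_s^t \nabla b_n(u,X_u^{x,n})\,D_sX_u^{x,n}\,\mathrm{d}u .
\end{equation*}
The task is then to bound $\mathbb{E}\big[|D_sX_t^{x,n}|^p\big]$ uniformly in $n$, $x$ and $s$, using only $\|b\|_\infty$, and to pass to the limit.

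The core step, and the main obstacle, is obtaining this uniform moment bound without any control on $\nabla b_n$. Here I would exploit the regularising effect of the Brownian noise exactly as in \cite{MMNPZ13, NilssenProske}. Via Girsanov's theorem, introduce the measure $\tilde{\mathbb{P}}_n$ with density $\mathcal{E}\big(-\int_0^\cdot b_n(s,X_s^{x,n})\,\mathrm{d}B_s\big)_T$; under $\tilde{\mathbb{P}}_n$ the process $X_\cdot^{x,n}-x$ is a Brownian motion, so that $X^{x,n}$ becomes a shifted Brownian motion. One then rewrites the time integral $\int_s^t \nabla b_n(u,X_u^{x,n})\,\mathrm{d}u$ through a spatial integration by parts in Wiener space, transferring the gradient off $b_n$ onto a Gaussian kernel; this produces a representation of $D_sX_t^{x,n}$ involving $b_n$ only (not its derivative), paired with Wiener integrals and Gaussian density factors. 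Since $\|b_n\|_\infty\leq\|b\|_\infty$, the $L^q$ moments of the density $\mathcal{E}(\cdot)_T$ are controlled by $\|b\|_\infty$ alone, and the resulting Gaussian functionals have moments bounded in terms of $d$, $p$ and $\|b\|_\infty$ only. Applying H\"older's inequality to separate the density from the functional yields
\begin{equation*}
\sup_{x\in\mathbb{R}}\ \sup_{s\in[0,t]}\ \mathbb{E}\big[\,|D_sX_t^{x,n}|^p\,\big]\leq C_{d,p}(\|b\|_\infty),
\end{equation*}
with a constant increasing in $\|b\|_\infty$ and independent of $n$.

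Finally I would pass to the limit $n\to\infty$. By the construction of the strong solution in \cite{MMNPZ13, NilssenProske}, $X^{x,n}\to X^x$ and $D_sX_t^{x,n}\to D_sX_t^x$ (weakly in $L^2(\Omega\times[0,T])$, using the closability of $D$ together with Lemma \ref{lemma 1.2.3}). The uniform bound above, taken for some exponent $p'>p$, provides uniform integrability of $|D_sX_t^{x,n}|^p$, so the estimate is inherited in the limit and gives $\sup_{x}\sup_{s}\mathbb{E}[|D_sX_t^x|^p]\leq C_{d,p}(\|b\|_\infty)$. Monotone dependence of the constant on $\|b\|_\infty$ is preserved because every bound produced above is an increasing function of the sup-norm of the drift. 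The delicate point throughout is the Wiener-space integration by parts that removes $\nabla b_n$; everything else is a routine combination of Girsanov's theorem, Gaussian moment estimates and the limiting argument.
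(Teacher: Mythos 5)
Your proposal follows essentially the same route as the paper's proof: regularise the drift by smooth, uniformly bounded vector fields, obtain the uniform bound $\sup_{x}\sup_{s}\sup_{n}\mathbb{E}[|D_sX_t^{n,x}|^p]\leq C_{d,p}(\|b\|_\infty)$ via the Girsanov reduction to Brownian motion and the Wiener-space integration by parts that removes $\nabla b_n$ (which is exactly the content of the result the paper invokes, Proposition 7 of \cite{NilssenProske}, together with the Gaussian estimates of \cite{MMNPZ13}), and then pass to the limit by a compactness/weak-convergence argument. One cosmetic remark: in the last step, since you only have weak $L^2$ convergence of $D_sX_t^{n,x}$, the transfer of the moment bound is more naturally argued via boundedness in $L^{p'}$ for $p'>p$ and weak lower semicontinuity of the norm, rather than uniform integrability (which pertains to convergence in probability); the conclusion is unaffected.
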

\begin{proof}
Since $b$ is bounded, the solution to the SDE \eqref{sde} belong to $L^p$ for all $p \geq 1$. In addition, there exists an approximating sequence of Malliavin differentiable solutions  $(X^{n,x})_{n\geq 1}$  that converges in $L^2$ to  $X^{x}$. It follows that $(X^{n,x})_{n\geq 1}$  converges to  $X^{x}$ in $L^p$ for all $p \geq 1$.
In addition, one can show as in \cite[Proposition 7]{NilssenProske} that 
$$
\sup_{x\in \mathbb{R}}\sup_{s\in [0,t]} \sup_{n}\mathbb{E}\left[ | D_s X_t^{n,x} |^p  \right] \leq C_{d,p}(\|b\|_\infty).
$$
The result then follows by a compactness criterion argument. 
	\end{proof}
The next result pertains with  the existence and representation of the Malliavin derivative of the solution to the FBSDE \eqref{sde}-\eqref{bsde3}. 

\begin{thm}\label{Main}
Suppose Assumptions (AX), (AY) and (AY1) are in force. Then the solution process $(Y^x,Z^x)$ belongs to $\mathbb{L}_{1,2} \times (\mathbb{L}_{1,2})^d$ and a version of $(D_uY_t^x,D_uZ_t^x)_{u,t\in [0,T]}$ is the unique solution to the linear BSDE 
\begin{align}\label{5.7}
D_u Y_t^x &= 0 \text{ and } D_u Z_t^x = 0, \text{ if } t\in [0,u),\nonumber\\
D_u Y_t^x & = \nabla_x \phi(X_T^x)D_u X_T^x + \int_{t}^{T} \langle \nabla g(s,\boldsymbol{X}^x),D_u \boldsymbol{X}_s^x \rangle \mathrm{d}s  - \int_{t}^{T} D_u Z_s^x \mathrm{d}B_s, \quad t\in [u,T],
\end{align}
where $D_uX^x$ is the Malliavin derivative of the process $X^x$.

\end{thm}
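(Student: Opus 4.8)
The plan is to recognise the backward equation \eqref{bsde3} as a BSDE of the abstract form \eqref{eqmain1} with a \emph{random} generator, and then to invoke Theorem \ref{Main2}. Concretely, I would freeze the forward trajectory and set $\xi:=\phi(X_T^x)$ together with the effective driver $\hat g(s,\omega,y,z):=g(s,X_s^x(\omega),y,z)$. The whole argument then reduces to checking that the pair $(\xi,\hat g)$ satisfies Assumptions \ref{assum1}--\ref{assum2} and \ref{assum5}, after which Theorem \ref{Main2} delivers both the membership $(Y^x,Z^x)\in\mathbb{L}_{1,2}\times(\mathbb{L}_{1,2})^d$ and a representation of the Malliavin derivative of the form \eqref{MalY}; the desired representation \eqref{5.7} is recovered by substituting the explicit derivatives of $\xi$ and $\hat g$.

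First I would record the chain-rule derivatives. Since $\phi$ and $g(s,\cdot,\cdot,\cdot)$ are $C^1$ by (AY1) and $X^x$ is Malliavin differentiable with $X_T^x\in\mathbb{D}^{1,p}$ for every $p$ (Theorem \ref{thmpr1} together with the moment bound \eqref{eq 6.8}), one obtains
\begin{align*}
D_u\xi &= \nabla_x\phi(X_T^x)\,D_uX_T^x,\\
D_u\hat g(s,y,z) &= \langle \nabla_x g(s,X_s^x,y,z),\,D_uX_s^x\rangle.
\end{align*}
Boundedness of $\phi$ gives $\xi\in L^\infty$, while $|\nabla_x\phi|\le\Lambda_\phi$ and \eqref{eq 6.8} give $\xi\in\mathbb{D}^{1,\infty}$, so (M1) and Assumption \ref{assum1} hold. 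For (M2), the $x$-Lipschitz bound in (AY) yields $|\nabla_x g(s,x,y,z)|\le\Lambda_x(1+|y|+f(|y|)|z|^\alpha)$, whence
\[
|D_u\hat g(s,y,z)|\le \Lambda_x\bigl(1+|y|+f(|y|)|z|^\alpha\bigr)\,|D_uX_s^x|,
\]
which is exactly the required form with $K_u(s):=\Lambda_x|D_uX_s^x|$ and $\tilde K_u\equiv 0$; the integrability $\sup_{t}\int_0^T\mathbb{E}|K_u(t)|^{2p}\diff u<\infty$ follows immediately from \eqref{eq 6.8} (recall $D_uX_t^x=0$ for $u>t$). The stochastic Lipschitz and quadratic-growth conditions of Assumption \ref{assum2} for $\hat g$ then follow verbatim from the corresponding bounds in (AY) evaluated along $X^x$.

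The main obstacle is that Assumption \ref{assum2} also requires $\|\hat g(t,0,0)\|_{L^\infty}\le\Lambda_0$, whereas here $\hat g(t,0,0)=g(t,X_t^x,0,0)$ is only bounded by $\Lambda_0+\Lambda_x|X_t^x|$ and is therefore unbounded. This is precisely where the milder integrability built into the a priori estimates is used: since $b$ is bounded, \eqref{eq 6.3} implies $X^x$ has finite moments of all orders, so $\int_0^T|\hat g(s,0,0)|\diff s\le\Lambda_0 T+\Lambda_x\int_0^T|X_s^x|\diff s\in L^\gamma$ for every $\gamma\ge1$. Consequently Lemma \ref{thap0}, and with it the uniform bounds that drive the truncation argument behind Theorem \ref{Main2}, remain available, and the conclusion of Theorem \ref{Main2} applies to $(\xi,\hat g)$. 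I expect the delicate point to be the bookkeeping that verifies the truncated-BSDE estimates only ever call on this $L^\gamma$ control of $\hat g(\cdot,0,0)$ and never on its boundedness.

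Finally I would assemble the representation. Plugging the two displayed chain-rule identities into \eqref{MalY} and writing $\boldsymbol X^x=(X^x,Y^x,Z^x)$ and $\nabla g=(\nabla_x g,\nabla_y g,\nabla_z g)$, the driver of the Malliavin-derivative BSDE becomes
\[
\langle\nabla_x g(s,\boldsymbol X^x),D_uX_s^x\rangle+\nabla_y g(s,\boldsymbol X^x)\,D_uY_s^x+\nabla_z g(s,\boldsymbol X^x)\,D_uZ_s^x=\langle\nabla g(s,\boldsymbol X^x),D_u\boldsymbol X_s^x\rangle,
\]
and the terminal term is $\nabla_x\phi(X_T^x)\,D_uX_T^x$, which is exactly \eqref{5.7}; uniqueness of the solution to this linear BSDE is inherited from Theorem \ref{Main2}.
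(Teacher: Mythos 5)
Your proposal is correct and follows essentially the same route as the paper's own proof: both reduce \eqref{bsde3} to the abstract Theorem \ref{Main2} by freezing the forward process in the driver $\bar g(t,\omega,y,z)=g(t,X_t^x(\omega),y,z)$, verify (M1)--(M2) via the chain rule with $K_u(t)=\Lambda_x|D_uX_t^x|$, $\tilde K_u\equiv 0$ and the moment bound \eqref{eq 6.8}, and then substitute the derivatives of $\xi=\phi(X_T^x)$ and of $\bar g$ into \eqref{MalY} to obtain \eqref{5.7}. The only difference is that you explicitly flag and repair the unboundedness of $\bar g(t,0,0)=g(t,X_t^x,0,0)$ through the $L^{\gamma}$ hypothesis of Lemma \ref{thap0}, a point the paper's proof passes over in silence, so your version is if anything slightly more careful.
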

\begin{proof}
We just need to prove as in \cite{ArImDR, ImkDos, ImkDosReis, DosReis} that the conditions of the theorem imply that assumptions (M1) and (M2) in Section \ref{Maldif} are satisfied. 

Here $\xi(x)= \phi(X_T^x)$ and for $u\in [0,T]$ we have
\[|D_u\xi(x)|= |D_u\phi(X_T^x)|=|(D_uX_T^x)\nabla_x\phi(X_T^x)| \leq C |D_uX_T^x|.\]
Using \eqref{eq 6.8}, it holds that for any $p\geq 1$ 
\begin{align*}
\sup_{x\in \mathbb{R}}\sup_{u\in [0,T]}\mathbb{E}\left[ |D_u\xi(x)|^{p}\right]\leq C \sup_{x\in \mathbb{R}}\sup_{u\in [0,T]}\mathbb{E} \left[ |D_uX_T^x|^{p}\right] < \infty.
\end{align*}
On the other hand, consider 
\begin{align*}
\bar{g}: \Omega\times [0,T]\times \mathbb{R} &\rightarrow \mathbb{R}\\
(\omega,t,x,y,z)&\mapsto g(t,X_t^x(\omega),y,z).
\end{align*}
From (AY1), it is readily seen that $\bar{g}$ satisfies (M1) almost surely. Furthermore, 
\[  |D_u \bar{g}(t,x,y,z)|\leq \Lambda_x|D_uX_t^x|(1 +|y| + f(|y|)|z|^{\alpha}) .\]
Using once more \eqref{eq 6.8} one has  $\sup_{0\leq t \leq T}\int_{0}^{T} \mathbb{E}[|D_u X_t^x|^{2p}]\mathrm{d}u < \infty$ for any $p\geq 1$. Thus (M2) is satisfied.

\end{proof}

\begin{remark}
In the one dimensional case, under (AX), the first variation process and the Malliavin derivative of the solution to the SDE \eqref{sde} can be represented explicitly (see for example \cite{BMBPD17, MenTan19}) with respect to the local tome space-time integral $\mathrm{d}t\otimes \mathrm{d}\mathbb{P}\text{-a.s.}:$  
\begin{align*}
\nabla_x X_t^x &= \exp\Big( -\int_{0}^{t}\int_{\mathbb{R}} b(u,y)L^{X^x}(\mathrm{d}u,\mathrm{d}y)  \Big) ,\\
D_s X_t^x &= \exp\Big( -\int_{s}^{t}\int_{\mathbb{R}} b(u,y)L^{X^x}(\mathrm{d}u,\mathrm{d}y)  \Big).
\end{align*}	
\end{remark}
%
%

\subsection{The case of SDEs with $C_b^{\beta}$ drift } In this section, we assume that the drift $b$ of the forward SDE satisfies:
\begin{assum}\label{asumb2}\leavevmode
There exists $\beta \in (0,1)$ such that $b\in L^{\infty}([0,T];C_b^{\beta}(\mathbb{R}^d;\mathbb{R}^d))$.
\end{assum}

The following result is from \cite{FlanGubiPrio10}

\begin{thm}\label{theoGFP} Suppose Assumption \ref{asumb2} and fix any $\beta' \in (0,\beta)$. Then 
\begin{enumerate}
\item (Pathwise uniqueness) For every $s \geq 0, x\in \mathbb{R}^d$ the SDE \eqref{sde} has a unique continuous adapted solution $ X^{s,x}=(X^{s,x}_t(\omega),t\geq s,\omega\in \Omega)$.
\item (Differentiable flow) There exists a stochastic flow $X^{s,x}_t=\phi_{s,t}(x)$ of diffeomorphisms for equation \eqref{sde}. The flow is also of class $C^{1+\beta'}$ and for any $p\geq 1$
\[ \sup_{x\in \mathbb{R}} \sup_{0\leq s \leq T} \mathbb{E}\Big[ \sup_{s\leq t \leq T}\|D\phi_{s,t}(x)\|^p  \Big] < \infty .\]
\end{enumerate}
\end{thm}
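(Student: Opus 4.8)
The plan is to prove both assertions by the Itô--Tanaka--Zvonkin transformation, which trades the irregular drift $b$ for a regularising change of variables built from an associated parabolic (Kolmogorov) equation. Fix $\beta' \in (0,\beta)$ and, for a parameter $\lambda > 0$ to be chosen large, consider the vector-valued backward equation
\begin{equation*}
\partial_t U + \tfrac{1}{2}\Delta U + b\cdot \nabla U - \lambda U = -b, \qquad U(T,\cdot)=0,
\end{equation*}
understood componentwise in its mild (semigroup) formulation so as to accommodate the fact that $b$ is merely bounded measurable in time. First I would establish, using the smoothing of the Gaussian semigroup together with the regularity bounds for its convolution with $b \in L^{\infty}([0,T];C_b^{\beta}(\R^d;\R^d))$, that this equation has a unique solution $U$ with $U(t,\cdot)\in C_b^{2+\beta'}$ uniformly in $t\in[0,T]$, and that $\|\nabla U\|_{\infty}\to 0$ as $\lambda\to\infty$.

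Next, choosing $\lambda$ so large that $\|\nabla U\|_{\infty}\le 1/2$, the map $\Phi_t := \mathrm{id} + U(t,\cdot)$ is, for each fixed $t$, a $C^{2}$ diffeomorphism of $\R^d$ whose derivatives $\nabla\Phi_t$ and $\nabla\Phi_t^{-1}$ are bounded uniformly in $t$. Applying Itô's formula to $U(t,X_t)$ and using the equation to cancel the rough drift, I would show that $Y_t:=\Phi_t(X_t)$ solves an SDE
\begin{equation*}
\diff Y_t = \tilde b(t,Y_t)\diff t + \tilde\sigma(t,Y_t)\diff B_t,
\end{equation*}
with $\tilde b(t,y)=\lambda U(t,\Phi_t^{-1}(y))$ and $\tilde\sigma(t,y)=(I+\nabla U)(t,\Phi_t^{-1}(y))$. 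Since $U\in C_b^{2+\beta'}$, both $\tilde b$ and $\tilde\sigma$ are of class $C^{1+\beta'}$ in space with bounded derivatives; in particular they are globally Lipschitz.

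Pathwise uniqueness for the transformed equation is then classical, and because $\Phi_t$ and $\Phi_t^{-1}$ are bijections it transfers back to \eqref{sde}, giving assertion (1). For assertion (2) I would invoke the classical theory of stochastic flows for SDEs with $C^{1+\beta'}$ coefficients: the transformed equation generates a flow $\psi_{s,t}$ of $C^{1+\beta'}$ diffeomorphisms with $\sup_{x,s}\mathbb{E}\big[\sup_{s\le t\le T}\|D\psi_{s,t}(x)\|^p\big]<\infty$ for every $p\ge 1$. The flow of the original equation is recovered as $\phi_{s,t}=\Phi_t^{-1}\circ\psi_{s,t}\circ\Phi_s$, a composition of $C^{1+\beta'}$ maps and hence itself a $C^{1+\beta'}$ diffeomorphism; the chain rule combined with the uniform bounds on $\nabla\Phi_s$, $\nabla\Phi_t^{-1}$ and the moment bound on $D\psi_{s,t}$ yields the stated estimate on $D\phi_{s,t}(x)$, uniformly in $s$ and $x$.

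The main obstacle is the first step. Because $b$ is only Hölder in space and merely bounded measurable in time, standard parabolic Schauder theory does not apply directly; one must instead work with the mild formulation and exploit the analyticity and the $C^{\beta}\to C^{2+\beta}$ smoothing of the heat semigroup to obtain the uniform $C^{2+\beta'}$ bound on $U$ together with the crucial smallness $\|\nabla U\|_{\infty}\to 0$ that makes $\Phi_t$ a genuine global diffeomorphism with uniformly controlled derivatives. Once the transformation is established, the remaining steps are routine applications of Itô's formula and classical flow theory.
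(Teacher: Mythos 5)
Your proposal is correct and follows essentially the same route as the paper's source: the paper does not prove Theorem \ref{theoGFP} itself but imports it from \cite{FlanGubiPrio10}, whose argument is exactly the Zvonkin-type transformation you describe (solve the resolvent-type Kolmogorov equation, take $\lambda$ large so that $\Psi_\lambda=\mathrm{id}+u_\lambda$ is a diffeomorphism with uniformly controlled derivatives, pass to an SDE with $C^{1+\beta'}$ coefficients, apply classical flow theory, and conjugate back) --- indeed the paper redeploys precisely this transformation in its own proofs of Proposition \ref{cor 5.7} and Remark \ref{kun}. The only cosmetic point is that Kunita's flow theory for $C^{1+\beta'}$ coefficients yields a flow of class $C^{1+\beta''}$ only for $\beta''<\beta'$, which is harmless here since $\beta'\in(0,\beta)$ is arbitrary: one simply runs your argument starting from some exponent strictly between $\beta'$ and $\beta$.
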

Similar to \cite{FlanGubiPrio10}, we have
\begin{prop}\label{cor 5.7}
Under Assumption \ref{asumb2}, the solution $(X_t^x,0\leq t\leq T)$ to equation \eqref{sde} is Malliavin differentiable and for any $p\geq 2$
\begin{equation}\label{5.12}
\sup_{0\leq s\leq t} \mathbb{E}\big[ \sup_{s\leq t \leq T}|D_sX_t^x|^p  \big] < \infty .
\end{equation}
\end{prop}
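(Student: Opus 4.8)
The plan is to reduce the statement to the differentiable-flow estimate of Theorem~\ref{theoGFP} by exploiting the additive structure of the noise in \eqref{sde}. Since the diffusion coefficient is the identity, for a \emph{smooth} drift the Malliavin derivative and the first variation process of $X^x$ solve the \emph{same} linear matrix equation, and this lets me identify $D_sX_t^x$ with the Jacobian of the stochastic flow evaluated at $X_s^x$. First I would regularise: pick a sequence $(b_n)_{n\geq 1}\subset L^{\infty}([0,T];C_b^{\infty}(\mathbb{R}^d;\mathbb{R}^d))$ with $b_n\to b$ in the sense required by \cite{FlanGubiPrio10} and $\sup_n\|b_n\|_{\infty}\leq\|b\|_{\infty}$, $\sup_n[b_n]_{\beta,T}<\infty$, and denote by $X^{n,x}$ and $\phi^n_{s,t}$ the corresponding strong solution and flow. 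Because the bounds in Theorem~\ref{theoGFP} depend on $b$ only through $\|b\|_{\infty}$ and $[b]_{\beta,T}$, the FGP construction yields
\[
\sup_{n}\ \sup_{x\in\mathbb{R}^d}\ \sup_{0\leq s\leq T}\ \mathbb{E}\Big[\sup_{s\leq t\leq T}\|D\phi^n_{s,t}(x)\|^p\Big]<\infty,\qquad p\geq 1.
\]

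For each fixed $n$ the coefficient $b_n$ is smooth, so $X^{n,x}\in\mathbb{D}^{1,p}$ and, differentiating \eqref{sde}, the Malliavin derivative satisfies, for $t\geq s$,
\begin{align*}
D_sX_t^{n,x}=I+\int_s^t \nabla b_n(u,X_u^{n,x})\,D_sX_u^{n,x}\,\mathrm{d}u,
\end{align*}
while the first variation process solves the same linear equation started from $I$ at time $0$:
\begin{align*}
\nabla_x X_t^{n,x}=I+\int_0^t \nabla b_n(u,X_u^{n,x})\,\nabla_x X_u^{n,x}\,\mathrm{d}u.
\end{align*}
By uniqueness for the pathwise linear ODE this gives $D_sX_t^{n,x}=\nabla_xX_t^{n,x}\,(\nabla_xX_s^{n,x})^{-1}$, and the flow (cocycle) property $\nabla_xX_t^{n,x}=D\phi^n_{s,t}(X_s^{n,x})\,\nabla_xX_s^{n,x}$ reduces this to the clean identity
\[
D_sX_t^{n,x}=D\phi^n_{s,t}(X_s^{n,x}),\qquad 0\leq s\leq t\leq T.
\]

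The moment bound is then transferred by conditioning. Since $\phi^n_{s,\cdot}$ is driven by the increments $(B_u-B_s)_{u\geq s}$, it is independent of $\mathfrak{F}_s$, while $X_s^{n,x}$ is $\mathfrak{F}_s$-measurable; writing $\Psi^n_s(y):=\mathbb{E}[\sup_{s\leq t\leq T}\|D\phi^n_{s,t}(y)\|^p]$, conditioning on $\mathfrak{F}_s$ gives
\[
\mathbb{E}\Big[\sup_{s\leq t\leq T}|D_sX_t^{n,x}|^p\Big]
=\mathbb{E}\big[\Psi^n_s(X_s^{n,x})\big]
\leq \sup_{y\in\mathbb{R}^d}\mathbb{E}\Big[\sup_{s\leq t\leq T}\|D\phi^n_{s,t}(y)\|^p\Big],
\]
so the left-hand side is bounded uniformly in $n$, $s$ and $x$. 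Finally, $X^{n,x}\to X^x$ in $L^2(\Omega)$ (indeed in every $L^p$) by the stability part of \cite{FlanGubiPrio10}, and the uniform bound $\sup_n\mathbb{E}[\|DX_t^{n,x}\|_{L^2}^2]<\infty$ lets me invoke the closability criterion of Lemma~\ref{lemma 1.2.3}: $X_t^x\in\mathbb{D}^{1,2}$, $D_sX_t^{n,x}$ converges weakly to $D_sX_t^x$, and the estimate \eqref{5.12} passes to the limit by weak lower semicontinuity of the norm.

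The main obstacle is the absence of a classical derivative $\nabla b$: the variational equation above is meaningful only after smoothing, so the whole argument hinges on the uniform-in-$n$ control of the flow Jacobians $D\phi^n_{s,t}$. This is precisely where the FGP regularisation (the Zvonkin-type transform turning \eqref{sde} into an equation with Lipschitz coefficients) is essential, and one must verify that their estimates are uniform along the approximating sequence, i.e.\ that they depend on $b_n$ only through $\|b\|_{\infty}$ and $[b]_{\beta,T}$. Granting this, the identification $D_sX_t^x=D\phi_{s,t}(X_s^x)$ survives in the limit and delivers \eqref{5.12}.
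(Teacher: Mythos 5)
Your reduction in the smooth case is correct and elegant: with additive noise, $D_sX_t^{n,x}$ and $\nabla_xX_t^{n,x}(\nabla_xX_s^{n,x})^{-1}$ solve the same linear equation, so the identification $D_sX_t^{n,x}=D\phi^n_{s,t}(X_s^{n,x})$ holds, and your conditioning argument (independence of $\phi^n_{s,\cdot}$ from $\mathfrak{F}_s$) correctly transfers the flow estimate of Theorem~\ref{theoGFP}. The uniformity in $n$ you flag is also fine, since the Schauder estimates underlying \cite{FlanGubiPrio10} depend on the drift only through its $L^{\infty}$--$C_b^{\beta}$ norm.

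The genuine gap is in the final limit passage. Lemma~\ref{lemma 1.2.3} gives you, for each \emph{fixed} $t$, that $X_t^x\in\mathbb{D}^{1,2}$ and that $D_\cdot X_t^{n,x}$ converges to $D_\cdot X_t^x$ only \emph{weakly} in $L^2(\Omega\times[0,T])$ in the variables $(s,\omega)$. Weak lower semicontinuity of the norm then controls quantities such as $\mathbb{E}\int_0^T|D_sX_t^x|^2\,\mathrm{d}s$ for each fixed $t$ (and $p$-th moment analogues via weak compactness in $L^p$), but it does not control $\mathbb{E}\bigl[\sup_{s\leq t\leq T}|D_sX_t^x|^p\bigr]$, where the supremum over $t$ sits \emph{inside} the expectation; indeed, since $D_sX_t^x$ is defined for each $t$ only up to $\mathrm{d}s\otimes\mathbb{P}$-null sets, the weak-limit construction does not even produce a version for which this supremum is measurable. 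Closing the gap requires convergence that is strong and uniform in $t$, e.g. $\sup_x\mathbb{E}[\sup_{0\leq s\leq t\leq T}|D_sX_t^{n,x}-D_sX_t^x|^p]\to 0$, so that the identification $D_sX_t^x=D\phi_{s,t}(X_s^x)$ --- and with it a $t$-continuous version --- survives the limit; this is precisely what you grant without proof in your last sentence. The paper avoids the issue by not mollifying at all: it applies the Zvonkin-type transform $\Psi_{\lambda}(t,x)=x+u_{\lambda}(t,x)$ directly to the rough equation, obtaining the SDE \eqref{auxi} with $C_b^{1+\beta}$ coefficients, quotes the classical result \cite[Theorem 2.2.1]{Nua06} to get Malliavin differentiability of $\tilde X$ together with a continuous-in-$t$ version of $D_s\tilde X_t$ and the moment bound, and pulls everything back through $\Psi_{\lambda}^{-1}$, whose first derivative is bounded. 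Note that the paper's own stability lemma (proved with the same transform) supplies exactly the strong convergence your scheme lacks --- but once you invoke the transform there, it, and not the compactness criterion, is doing the real work.
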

\begin{proof} Fix $\lambda > 0$ and consider the following backward Kolmogorov PDE 
\[ \partial_t u_{\lambda} + \mathcal{L}u_{\lambda}- \lambda u_{\lambda} = -b, \quad (t,x) \in [0,+\infty)\times \mathbb{R}^d, \]
where $\mathcal{L}u_{\lambda} = 1/2\Delta u_{\lambda} + b\cdot Du_{\lambda}.$ It can be shown (see \cite[Lemma 6]{FlanGubiPrio10}) that $u_{\lambda} \in L^{\infty}([0,\infty); C_b^{2+\beta}(\mathbb{R}^d))$. In addition for $\lambda$ large enough, the map $\Psi_{\lambda}(t,x) = x + u_{\lambda}(t,x) $ satisfies (see \cite[Lemma 6]{FlanGubiPrio10}) :
\begin{enumerate}
\item[(i)] uniformly in $t$, $\Psi_{\lambda}$ has bounded first and second derivatives and the second (Fr\'{e}chet) derivative $D^2\Psi_{\lambda}$ is globally $\beta\text{-H\"{o}lder}$ continuous.
\item[(ii)] For any $t\geq 0,$ $\Psi_{\lambda} : \mathbb{R}^d\mapsto \mathbb{R}^d$ is non-singular diffeomorphism of class $C^2$.
\item[(iii)] $\Psi_{\lambda}^{-1}$ has bounded first and second derivatives uniformly in $t \in [0,\infty)$.
\end{enumerate}
Let us consider the following SDE 
\begin{align}\label{auxi}
\tilde{X}_t = y + \int_{0}^{t} \tilde{b}(v, \tilde{X}_v)\mathrm{d}v + \int_{0}^{t} \tilde{\sigma}(v, \tilde{X}_v)\mathrm{d}B_v ,\quad t \in [0,T],
\end{align}
where $\tilde{b}(t,y) = -\lambda u_{\lambda}(t,\Psi_{\lambda}^{-1}(t,y))$ and $\tilde{\sigma}(t,y) = D \Psi_{\lambda} (t,\Psi_{\lambda}^{-1}(t,y))$. It is then clear that: $\tilde{b} \in L^{\infty}([0,\infty); C_b^{2+\beta}(\mathbb{R}^d))$ and $\tilde{\sigma} \in L^{\infty}([0,\infty); C_b^{1+\beta}(\mathbb{R}^d))$. Thus, equation \eqref{auxi} has a unique strong Malliavin differentiable solution (see \cite[Theorem 2.2.1]{Nua06}) such that for any $p\geq 2$ 
\begin{align*}
\sup_{0\leq s\leq t} \mathbb{E}[ \sup_{s\leq t \leq T}|D_s\tilde X_t|^p ] < \infty.
\end{align*}
Let $(\tilde{X}_t,0\leq t\leq T)$ be the solution to the SDE \eqref{auxi}. Then we deduce that $X_t = \Psi_{\lambda}^{-1}(t,\tilde X_t)$ is solution to SDE \eqref{sde}. Using the chain rule for Malliavin calculus  and the fact that $\Psi_{\lambda}^{-1}$ has bounded first derivative, we have
\begin{align*}
\mathbb{E}[ \sup_{s\leq t \leq T}|D_s X_t|^p ]\leq C \mathbb{E}[ \sup_{s\leq t \leq T}|D_s\tilde X_t|^p ]< \infty. 
\end{align*}
Thus, \eqref{5.12} follows. 
\end{proof}
\begin{remark}\label{kun}
By using the same argument as above, one can show that :
$$\mathbb{E}\Big[ \sup_{0\leq t \leq T}|\nabla_x X_{t}^x|^{p} \Big]<\infty,\,\, \forall p \in ]-\infty, 0[ \cup ]0,+\infty[\footnote{The case $p \geq 1$ was treated in Theorem \ref{theoGFP} ( see \cite{FlanGubiPrio10})}.$$
Indeed, the solution $\tilde{X}$ to the equation \eqref{auxi} \textup{(}with Lipschitz continuous coefficients $\tilde{b}$ and $\tilde{\sigma}$\textup{)} is differentiable with respect to x and  $(\nabla_x  \tilde X_{t})^{-1}$ satisfies a linear SDE such that $\mathbb{E}\left[ \sup_{0\leq t \leq T}|(\nabla_x \tilde X_{t})^{-1}|^{p} \right]<\infty$, for any $p>1$ \textup{(}see \cite{Kun90}\textup{)}. Then, by using the fact that $\Psi^{-1}$ is differentiable with bounded derivative, we obtain the desired result.
\end{remark}

Note that if the drift $b$ is such that $\dive b \in L^1([0,T];L^1_{\text{loc}}(\mathbb{R}^d))$, then using Liouville's theorem, one can show that the Malliavin derivative of the forward process satisfies the following Euler identity:
\begin{equation}
\det(D_s X_t^x) = 1_{\{ 0\leq s\leq t\}}\exp\left( \int_s^t \dive b(v,X_v^x) \mathrm{d}v\right),
\end{equation}
where $\det(\cdot)$ denotes the determinant of a matrix. In this work, $b$ is not differentiable, then the above representation is not valid. 
Assuming that $b$ is H\"older continuous, a representation of the first variation process $\nabla_x X^x_t$ of the SDE \eqref{sde} was derived in terms of a system of Young type equations in \cite{Khoa}. More precisely

\begin{thm}[Theorem 4.1 in \cite{Khoa}]\label{Khoa}
Suppose Assumption \ref{asumb2}. For any $\beta' \in (0,\beta)$, the first variation process $\nabla X_t^x$ satisfies the following system of Young-type equations
\begin{align}\label{young1}
	\partial_{x_i}X_t^{j,x} = \delta_{i,j} + \sum_{k=1}^{d}\int_{0}^{t}\partial_{x_i}X_s^{j,x}\mathrm{d}\boldmath{V}_s^{k,j}(b,X), \quad \forall i,j\in \{1,\cdots,d\}
\end{align}
where $\delta_{i,j}$ stands for the Kronecker delta symbol, $\boldsymbol{V}_t^{k,j}(b,X)= \mathcal{A}_t^X[\partial_k b^j]$ and $(\mathcal{A}_t)_{s\leq t \leq T}$ is the unique (up to modifications) stochastic process with values in $\mathbb{R}^d$ satisfying conditions of Theorem 2.3 in \cite{Khoa}).
Moreover, the map $t\rightarrow \nabla X_t^x$ is a.s. $\frac{1+\beta'}{2}\text{-H\"{o}lder}$ continuous for every $\beta' \in (0,\beta)$.
\end{thm}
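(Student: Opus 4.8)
The plan is to read the system \eqref{young1} as a linear \emph{Young} differential equation driven by the paths $t\mapsto \boldsymbol{V}_t^{k,j}(b,X)=\mathcal{A}_t^X[\partial_k b^j]$, and then to identify its solution with the first variation process $\nabla X^x$ furnished by Theorem \ref{theoGFP}. The essential difficulty is that, under Assumption \ref{asumb2}, the field $\partial_k b^j$ exists only as a distribution of negative H\"older--Besov regularity $\beta-1$, so the naive expression $\int_0^t \partial_k b^j(s,X_s^x)\diff s$ is meaningless. The whole argument rests on the regularization-by-noise principle: since $X^x$ is a perturbed Brownian path, the averaging operator $\phi\mapsto \mathcal{A}_t^X[\phi]:=\int_0^t \phi(X_s^x)\diff s$ is smoothing and extends continuously to distributional $\phi$, producing a process that is H\"older in time with exponent strictly above $\tfrac12$.

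First I would give rigorous meaning to $\mathcal{A}_t^X[\partial_k b^j]$. Writing $\mathcal{A}_{s,t}^X[\phi]=\int_s^t \phi(X_r^x)\diff r$, a second-moment computation---conveniently organised through the stochastic sewing lemma, or through occupation-density estimates for the Gaussian-type law of $X_r^x$---yields, for $\phi\in C^\theta(\R^d)$ with $\theta\in(-1,0)$,
\[
\big\|\mathcal{A}_{s,t}^X[\phi]\big\|_{L^p(\Omega)}\;\leq\; C\,|t-s|^{1+\theta/2}\,\|\phi\|_{C^\theta},
\]
uniformly in $x$ and for every $p\geq1$. Taking $\theta=\beta-1$ and invoking Kolmogorov's continuity criterion produces a genuine continuous process $\boldsymbol{V}^{k,j}=\mathcal{A}^X[\partial_k b^j]$, unique up to modifications, whose sample paths are $\gamma$-H\"older in time for every $\gamma<\tfrac{1+\beta}{2}$. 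This half-order gain from the Brownian averaging ($1+\theta/2=\tfrac{1+\beta}{2}$ when $\theta=\beta-1$) is the crux of the matter: it is exactly what lifts the regularity of the integrator above the Young threshold.

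With $\boldsymbol{V}^{k,j}$ in hand and $\gamma=\tfrac{1+\beta'}{2}>\tfrac12$, the system \eqref{young1} falls within classical Young theory. Any candidate solution and the driver being $\gamma$-H\"older with $2\gamma=1+\beta'>1$, the Young integrals $\int_0^t \partial_{x_i}X_s^{k,x}\,\diff\boldsymbol{V}_s^{k,j}$ are well defined, and a contraction/Picard argument in the space of $\gamma$-H\"older paths gives existence and uniqueness of a solution, which inherits the $\gamma$-H\"older regularity of $\boldsymbol{V}$. This already yields the asserted H\"older continuity of $t\mapsto \nabla X_t^x$, once the identification below is secured.

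The final and most delicate step is to identify this Young solution with the true first variation. I would mollify the drift, $b^\varepsilon=b*\rho_\varepsilon$, so that the flow $X^{\varepsilon,x}$ is smooth and its first variation solves the classical equation $\partial_{x_i}X_t^{\varepsilon,j,x}=\delta_{ij}+\sum_k\int_0^t \partial_k b^{\varepsilon,j}(s,X_s^{\varepsilon,x})\,\partial_{x_i}X_s^{\varepsilon,k,x}\diff s$, and then let $\varepsilon\to0$. On the left, $\nabla X^{\varepsilon,x}\to\nabla X^x$ by the stability of the diffeomorphic flow of Theorem \ref{theoGFP} together with Remark \ref{kun}. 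On the right, the Lebesgue integral coincides with the Young integral against the smooth integrator $\boldsymbol{V}^{\varepsilon,k,j}=\mathcal{A}^X[\partial_k b^{\varepsilon,j}]$; applying the moment bound above to $\partial_k b^{\varepsilon,j}-\partial_k b^j$ gives $\boldsymbol{V}^{\varepsilon,k,j}\to\boldsymbol{V}^{k,j}$ in $C^{\gamma'}$ for every $\gamma'<\tfrac{1+\beta}{2}$, uniformly in $x$, and the continuity of the Young integral in the H\"older norms of both integrand and integrator transfers the identity to the limit. The hard part is precisely this uniform-in-$\varepsilon$ estimate: one must control $\mathcal{A}^X[\partial_k b^{\varepsilon,j}]$ through $\|b^\varepsilon\|_{C^\beta}\lesssim\|b\|_{C^\beta}$ rather than through the blowing-up $\|\partial_k b^\varepsilon\|_\infty$, so that no regularity is lost in the passage to the limit---this is where the sewing/local-time machinery of \cite{Khoa} does the decisive work.
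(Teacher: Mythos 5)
Your proposal is correct and takes essentially the same route as the paper, which states this result as an import from \cite{Khoa} and summarizes its proof in exactly the three ingredients you reconstruct: well-posedness of $\boldsymbol{V}^{k,j}(b,X)=\mathcal{A}^X[\partial_k b^j]$ via the stochastic sewing lemma (your moment bound with exponent $1+\theta/2=\tfrac{1+\beta}{2}$), solvability of \eqref{young1} by Young theory above the threshold $\tfrac12$, and identification of the solution with $\nabla X^x$ by mollifying $b$ and using stability of Young integrals with bounds controlled by $\|b\|_{C^\beta}$ rather than $\|\nabla b^\varepsilon\|_\infty$. No gap to report.
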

The main ingredient in the proof of the above result is to rigorously establish the well posedness of the process $\boldsymbol{V}_t^{k,j}(b,X)= \mathcal{A}_t^X[\partial_k b^j]$ for every $t \geq 0$ and $j,k \in \{ 1,\ldots,d \}$ via the so called stochastic sewing lemma and deriving \eqref{young1} by means of approximation arguments and stability properties of Young's integrals.	
Combining the arguments in Theorem \ref{Khoa} and Proposition \ref{cor 5.7}, we have the following representation of the Malliavin derivative of $X^x$
\begin{cor}[Representation of the Malliavin derivative of $X^x$] Suppose Assumption \ref{asumb2}. For any $\beta' \in (0,\beta)$, the Malliavin derivative of the solution $X_t^x$ to SDE \eqref{sde} is given by:
	\begin{align}\label{young2}
		(D_s^iX_t^x)^{j}= \delta_{i,j} + \sum_{k=1}^{d}\int_{s}^{t}(D_s^iX_u^x)^{j}\mathrm{d}\boldsymbol{V}_s^{k,j}(b,X), \quad \forall i,j\in \{1,\cdots,d\}.
	\end{align}
	Moreover, the map $s\mapsto D_s X^x(\cdot)$ is a.s. $\frac{1+\beta'}{2}\text{--H\"{o}lder}$ continuous for every $\beta' \in (0,\beta)$. 
\end{cor}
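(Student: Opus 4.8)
The plan is to transfer the representation of the first variation process $\nabla_x X_t^x$ obtained in Theorem \ref{Khoa} to the Malliavin derivative $D_s X_t^x$, exploiting that the noise in \eqref{sde} is additive. Since the diffusion coefficient equals the identity, a perturbation of the driving Brownian motion at time $s$ acts on the state exactly as a perturbation of an initial condition started at time $s$; hence the two linearised objects satisfy the same linearised dynamics and differ only by a shift of the initial time. Concretely, I would first establish the flow identity
\[
D_s X_t^x = \nabla_x X_t^x\,(\nabla_x X_s^x)^{-1}, \qquad 0\le s\le t,
\]
which is the $C_b^{\beta}$-analogue of relation \eqref{eq 6.7}. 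Here $(\nabla_x X_s^x)^{-1}$ is well defined and has moments of every order by Remark \ref{kun}, while $D_s X_t^x$ exists with finite moments by Proposition \ref{cor 5.7}. The identity itself is cleanest to obtain on the transformed SDE \eqref{auxi}, whose coefficients $\tilde b,\tilde\sigma$ are of class $C_b^{1+\beta}$, so that the classical relation between Malliavin derivative and first variation process applies there; it is then pushed back through the $C^2$ diffeomorphism $\Psi_\lambda^{-1}$ using the chain rule for Malliavin calculus together with the boundedness of the derivatives of $\Psi_\lambda^{-1}$.

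Given the flow identity, the representation \eqref{young2} follows by substituting \eqref{young1}. Since $\nabla_x X_t^x$ solves the Young equation \eqref{young1} driven by $\boldsymbol{V}_t^{k,j}(b,X)=\mathcal{A}_t^X[\partial_k b^j]$, right-multiplication by the $t$-independent matrix $(\nabla_x X_s^x)^{-1}$ and the linearity and stability of the Young integral yield that $t\mapsto D_s X_t^x$ solves the same equation, now started at time $s$ from the identity, that is \eqref{young2}. Equivalently, and in the spirit of \emph{combining the arguments} of the two cited results, one reruns the construction behind Theorem \ref{Khoa}: the process $t\mapsto D_s X_t^x$ formally satisfies $D_s X_t^x = \mathrm{Id} + \int_s^t \nabla_x b(u,X_u^x)\,D_s X_u^x\,\mathrm{d}u$, and the stochastic sewing lemma together with the approximation and stability estimates of \cite{Khoa} give rigorous meaning to $\int_s^t(\cdot)\,\mathrm{d}\boldsymbol{V}$ with lower endpoint $s$ and initial value $\delta_{i,j}$, producing \eqref{young2}.

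For the regularity statement, the map $s\mapsto D_s X^x$ inherits $\tfrac{1+\beta'}{2}$-H\"older continuity from the corresponding property of $t\mapsto\nabla_x X_t^x$ in Theorem \ref{Khoa} and from that of $s\mapsto(\nabla_x X_s^x)^{-1}$ (again via Remark \ref{kun} and the Young-equation structure), since matrix products and the Young integral preserve this modulus under the available moment bounds; a Kolmogorov--Chentsov argument then upgrades the resulting $L^p$ estimates to an almost-sure H\"older modulus. The delicate point, which I expect to be the main obstacle, is the rigorous justification of the flow identity in the non-differentiable regime: as $b\in C_b^{\beta}$ is not differentiable, $\nabla_x b(u,X_u^x)$ has no pointwise meaning and every integral must be read in the Young/sewing sense of \cite{Khoa}. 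Ensuring that the same driver $\boldsymbol{V}$ governs both $\nabla_x X$ and $D_s X$, and that right-multiplication by $(\nabla_x X_s^x)^{-1}$ commutes with the sewing limit, is where the work concentrates; routing the argument through the regularised equation \eqref{auxi} and invoking the uniform moment bounds of Proposition \ref{cor 5.7} and Remark \ref{kun} is what makes it go through.
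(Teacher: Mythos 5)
Your proposal is correct, but it does not follow the paper's route. The paper proves the corollary by smooth approximation: it takes compactly supported smooth $b_n\to b$, for which $D_sX^{n,x}$ satisfies the classical linear equation, and then passes to the limit in the sewing construction of \cite{Khoa}; the missing ingredient it supplies is precisely the stability lemma stated right after the corollary, namely $\lim_{n}\sup_{x}\mathbb{E}\bigl[\sup_{0\le s\le t\le T}|D_sX_t^{n,x}-D_sX_t^{x}|^p\bigr]=0$, itself proved through the transformed SDE \eqref{auxi}. You instead first establish the flow identity $D_sX_t^x=\nabla_xX_t^x(\nabla_xX_s^x)^{-1}$ and then transfer \eqref{young1} to \eqref{young2} by right-multiplication with the $t$-independent matrix $(\nabla_xX_s^x)^{-1}$, using linearity of the Young integral; the H\"older regularity in $s$ then comes for free from Theorem \ref{Khoa} together with the pathwise bound $\sup_t|(\nabla_xX_t)^{-1}|<\infty$ from Remark \ref{kun}, via $(\nabla X_s)^{-1}-(\nabla X_r)^{-1}=(\nabla X_s)^{-1}(\nabla X_r-\nabla X_s)(\nabla X_r)^{-1}$. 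Both routes pass through the Zvonkin-type transformation, but yours reduces the corollary to algebra once the flow identity is in hand, avoiding a second sewing-limit argument for $D_sX$, whereas the paper's route avoids the flow identity and produces a stability estimate that is independently useful (it is reused, e.g., in the path-regularity section). One point you should spell out: the transformed equation \eqref{auxi} has multiplicative noise, so the classical identity there reads $D_s\tilde X_t=\nabla_y\tilde X_t(\nabla_y\tilde X_s)^{-1}\tilde\sigma(s,\tilde X_s)$ with the extra factor $\tilde\sigma(s,\tilde X_s)=D\Psi_\lambda(s,X_s)$; it is exactly this factor that cancels against the conjugation by $D\Psi_\lambda^{-1}$ when you push the identity back to $X=\Psi_\lambda^{-1}(\cdot,\tilde X)$, and without verifying this cancellation the clean additive-noise flow identity does not follow from the chain rule alone. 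With that computation included, your argument is complete.
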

Although the validity of the process $\boldsymbol{V}_t^{k,j}(b,X)= \mathcal{A}_t^X[\partial_k b^j]$ in equation \eqref{young2} is given in \cite{Khoa}, one still need the following stability result to establish \eqref{young2}
\begin{lemm}
	Let $(b_n)_{n\geq 1}$ be a sequence of compactly supported smooth functions  approximating $b$ and let $(X^{n,x})_{n\geq 1}$ be a sequence of corresponding solution to the SDE \eqref{sde}. Then we have the following stability result: For all $p\geq 2$
	\begin{align}\label{5.13}
		\lim_{n\rightarrow \infty}\sup_{x\in \mathbb{R}^d}\mathbb{E}\Big[ \sup_{0\leq s \leq t \leq T}|D_sX_t^{n,x} - D_sX_t^{x} |^p   \Big] = 0.
	\end{align}
\end{lemm}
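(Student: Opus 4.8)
The plan is to lift the stability question to the level of the transformed SDEs with smooth coefficients, where classical stability of Malliavin derivatives applies, and then to transfer the conclusion back through the Zvonkin-type change of variables employed in the proof of Proposition \ref{cor 5.7}. Since the $b_n$ are obtained by mollification, they are bounded in $L^{\infty}([0,T];C_b^{\beta'}(\mathbb{R}^d;\mathbb{R}^d))$ uniformly in $n$ for every $\beta'\in(0,\beta)$, and converge to $b$ locally uniformly (in fact in $C^{\beta''}$ on compacts for $\beta''<\beta'$). First I would fix $\lambda$ large enough to serve simultaneously for $b$ and for all $b_n$, and introduce the solutions $u_\lambda^n$ of the backward Kolmogorov PDE with drift $b_n$ together with the maps $\Psi_\lambda^n(t,x)=x+u_\lambda^n(t,x)$, exactly as in the proof of Proposition \ref{cor 5.7}.

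\textbf{PDE and transformed-SDE stability.} By the Schauder estimates of \cite[Lemma 6]{FlanGubiPrio10}, the family $(u_\lambda^n)_n$ is bounded in $L^{\infty}([0,\infty);C_b^{2+\beta'}(\mathbb{R}^d))$ uniformly in $n$, and the convergence $b_n\to b$ together with interpolation yields $u_\lambda^n\to u_\lambda$ in $L^{\infty}([0,T];C_b^{2+\beta''}(\mathbb{R}^d))$ for $\beta''<\beta'$. Consequently $\Psi_\lambda^n$, its inverse, and their first and second derivatives converge uniformly on $[0,T]\times\mathbb{R}^d$ to those of $\Psi_\lambda$, and the $(\Psi_\lambda^n)^{-1}$ have first derivatives bounded uniformly in $n$. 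The transformed coefficients $\tilde b_n(t,y)=-\lambda u_\lambda^n(t,(\Psi_\lambda^n)^{-1}(t,y))$ and $\tilde\sigma_n(t,y)=D\Psi_\lambda^n(t,(\Psi_\lambda^n)^{-1}(t,y))$ then lie in $L^{\infty}([0,T];C_b^{2+\beta''})$ and $L^{\infty}([0,T];C_b^{1+\beta''})$ uniformly in $n$ and converge to $\tilde b,\tilde\sigma$ together with their first derivatives. Standard stability for SDEs with uniformly Lipschitz coefficients then gives $\mathbb{E}[\sup_{t\le T}|\tilde X_t^n-\tilde X_t|^p]\to 0$, and the classical theory for Lipschitz SDEs together with stability of the associated linear variational equations gives $\sup_x\mathbb{E}[\sup_{s\le t\le T}|D_s\tilde X_t^n-D_s\tilde X_t|^p]\to 0$, while $\sup_n\sup_x\mathbb{E}[\sup_{s\le t\le T}|D_s\tilde X_t^n|^{2p}]<\infty$.

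\textbf{Transfer via the chain rule.} Since $X_t^{n,x}=(\Psi_\lambda^n)^{-1}(t,\tilde X_t^n)$ and $X_t^x=\Psi_\lambda^{-1}(t,\tilde X_t)$, the chain rule for Malliavin derivatives gives $D_sX_t^{n,x}=D(\Psi_\lambda^n)^{-1}(t,\tilde X_t^n)\,D_s\tilde X_t^n$ and the analogue for $X^x$, whence
\begin{align*}
D_sX_t^{n,x}-D_sX_t^x =&\ \big[D(\Psi_\lambda^n)^{-1}(t,\tilde X_t^n)-D\Psi_\lambda^{-1}(t,\tilde X_t)\big]D_s\tilde X_t^n \\
&+ D\Psi_\lambda^{-1}(t,\tilde X_t)\big[D_s\tilde X_t^n-D_s\tilde X_t\big].
\end{align*}
The second summand is controlled by the boundedness of $D\Psi_\lambda^{-1}$ together with the previous step. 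For the first summand I would split the bracket as $[D(\Psi_\lambda^n)^{-1}-D\Psi_\lambda^{-1}](t,\tilde X_t^n)$, estimated by the uniform convergence of the inverse maps, plus $[D\Psi_\lambda^{-1}(t,\tilde X_t^n)-D\Psi_\lambda^{-1}(t,\tilde X_t)]$, estimated by the H\"older continuity of $D\Psi_\lambda^{-1}$ and the $\mathcal{S}^p$-convergence $\tilde X^n\to\tilde X$; H\"older's inequality with the uniform $L^{2p}$ bound on $D_s\tilde X_t^n$ then closes the estimate uniformly in $x$, giving \eqref{5.13}.

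The main obstacle I expect is the PDE stability of the first step: producing convergence of $u_\lambda^n$ and its first and second derivatives in a H\"older norm strong enough that both $\Psi_\lambda^n$ and $(\Psi_\lambda^n)^{-1}$ converge with two derivatives uniformly in space and time, while keeping all constants uniform in $n$. This requires care with the uniform Schauder estimates (so that a single $\lambda$ works for the whole sequence) and with the passage from convergence of $\Psi_\lambda^n$ to convergence of the inverse maps. Everything downstream is a relatively routine combination of the chain rule, H\"older's inequality, and the uniform moment bounds furnished by Proposition \ref{cor 5.7} and Remark \ref{kun}.
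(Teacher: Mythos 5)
Your proposal follows essentially the same route as the paper: reduce the problem via the Zvonkin-type transformation $\Psi_\lambda$ from the proof of Proposition \ref{cor 5.7} to the stability of the Malliavin derivatives of the transformed SDEs \eqref{auxi}, whose derivatives satisfy linear SDEs with bounded (Lipschitz) coefficients, and then invoke classical stability theory. The only difference is one of detail, not of method: you spell out the PDE/Schauder stability of $u_\lambda^n$ and the chain-rule transfer back through $(\Psi_\lambda^n)^{-1}$, both of which the paper's proof asserts implicitly, and these fillings-in are correct.
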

\begin{proof}
	To establish \eqref{5.13}, it is enough to prove that
	\begin{align}
		\lim_{n\rightarrow \infty}\sup_{x\in \mathbb{R}^d}\mathbb{E}\Big[ \sup_{0\leq s \leq t \leq T}|D_s\tilde{X}_t^{n,y} - D_s\tilde{X}_t^{y} |^p   \Big] = 0.
	\end{align}
	where $\tilde{X}$ satisfies \eqref{auxi}, while $\tilde{X}^n$ solves an SDE similar to the equation\eqref{auxi} with coefficients $\tilde{b}^n$ and $\tilde{\sigma}^n$ in lieu of $\tilde{b}$ and $\tilde{\sigma}$ respectively,  such that $\lim_{n\rightarrow
		\infty} (\tilde{b}^n,\tilde{\sigma}^n)= (\tilde{b},\tilde{\sigma})$. Therefore, both $D_s\tilde{X}_t^{n,y}$ and $D_s\tilde{X}_t^{y}$ satisfy a linear SDE with bounded coefficients. By applying classical result in the theory of SDE, one can obtained the sought result. This ends the proof.
\end{proof}
We have the subsequent results as a combination of Theorem \ref{theoGFP} and Theorem \ref{diffY}.

\begin{thm}\label{ClassYM}
Suppose Assumption \ref{asumb2}, (AY) and (AY1) hold. Let $x\in \mathbb{R}^d$ and $p > 1$. Then the map $x\mapsto (Y^x,Z^x)$ solution to \eqref{bsde3} is differentiable in the norm topology and the derivative process $(\nabla_x Y^x,\nabla_x Z^x)$ solves the BSDE
\begin{align}\label{5.15}
	\nabla Y^x_t =& \nabla_x \phi(X_T^x)\nabla X_T^x + \int_{t}^{T} \langle \nabla g(s,\boldsymbol{X}^x_s),\nabla \boldsymbol{X}_s^x \rangle \mathrm{d}s - \int_{t}^{T} \nabla_x Z_s^x \mathrm{d}B_s,
\end{align}
where $\nabla_x X^x$ satisfies equation \eqref{young1}. 
\end{thm}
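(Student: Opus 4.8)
\textbf{Proof strategy for Theorem \ref{ClassYM}.}

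The plan is to verify that the forward-backward system \eqref{sde}--\eqref{bsde3}, under Assumption \ref{asumb2}, (AY) and (AY1), fits the abstract parametrised framework of Section \ref{prelm}, and then invoke the differentiability result Theorem \ref{diffY} directly. Thus the entire task reduces to checking that the composite coefficients satisfy Assumption \ref{assum4}, in analogy with the proof of Theorem \ref{Main} in the bounded-drift case. Concretely, I would set $\xi(x) = \phi(X_T^x)$ and define the parametrised driver $\bar g(s,\omega,x,y,z) := g(s,X_s^x(\omega),y,z)$, so that the BSDE \eqref{bsde3} becomes an instance of \eqref{bsde} with these data. The H\"older-drift hypotheses enter only through the regularity of the flow $x\mapsto X^x$, supplied by Theorem \ref{theoGFP} (differentiable flow, with $\sup_x\sup_s\mathbb{E}[\sup_t\|D\phi_{s,t}(x)\|^p]<\infty$ for all $p\geq 1$).

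First I would check condition (C2)--(C3) on the terminal data. Since $\phi$ is bounded by (AY), $\xi(x)=\phi(X_T^x)$ is uniformly bounded, giving (C2)(i). For (C2)(ii) and (C3), the chain rule gives $\nabla_x\xi(x) = \nabla_x\phi(X_T^x)\,\nabla_x X_T^x$; by (AY1) $\nabla_x\phi$ is continuous and bounded (as $\phi$ is Lipschitz with constant $\Lambda_\phi$ from (AY)), and by Theorem \ref{theoGFP} the first variation $\nabla_x X_T^x$ has finite moments of every order and depends continuously on $x$ through the $C^{1+\beta'}$ flow, so $\sup_x\|\nabla_x\xi(x)\|_{L^{2p}}<\infty$ and $x\mapsto\nabla_x\xi(x)$ is continuous. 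Next I would verify (C1) for $\bar g$. The growth and the $y,z$-gradient bounds are inherited verbatim from (AY)--(AY1), since composing with $X_s^x$ does not touch the $(y,z)$-arguments. The only genuinely new estimate is the $x$-gradient: again by the chain rule,
\[
\nabla_x\bar g(s,x,y,z) = \nabla_x g(s,X_s^x,y,z)\,\nabla_x X_s^x,
\]
which by the (AY) Lipschitz-in-$x$ bound is controlled by $\Lambda_x(1+|y|+f(|y|)|z|^\alpha)\,|\nabla_x X_s^x|$. Setting $K_s(x):=\Lambda_x|\nabla_x X_s^x|$, the required integrability $\sup_x\int_0^T\mathbb{E}|K_s(x)|^{2p}\mathrm{d}s<\infty$ for all $p\geq 1$ follows immediately from the moment bound on $\nabla_x X^x$ in Theorem \ref{theoGFP} (and Remark \ref{kun}).

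With Assumption \ref{assum4} established, Theorem \ref{diffY} applies and yields that $x\mapsto(Y^x,Z^x)$ is differentiable in the $\mathcal{S}^{2p}\times\mathcal{H}^{2p}$ norm topology, with $(\nabla_x Y^x,\nabla_x Z^x)$ solving the linear BSDE \eqref{bsde1}. It remains only to rewrite \eqref{bsde1} in the present notation: the terminal term $\nabla_x\xi(x)$ becomes $\nabla_x\phi(X_T^x)\nabla_x X_T^x$, and the driver term $\nabla_x g(s,\Theta_s^x)$ picks up the extra factor $\nabla_x X_s^x$ from the composition, which is exactly what is packaged into the inner product $\langle\nabla g(s,\boldsymbol{X}^x_s),\nabla\boldsymbol{X}_s^x\rangle$ in \eqref{5.15} (here $\boldsymbol{X}^x=(X^x,Y^x,Z^x)$ and $\nabla\boldsymbol{X}^x$ collects $\nabla_x X^x$, $\nabla Y^x$, $\nabla Z^x$), with $\nabla_x X^x$ characterised by the Young-type system \eqref{young1}. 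I expect the main obstacle to be purely verificational rather than conceptual: one must ensure the moment bound on $\nabla_x X^x$ holds for \emph{all} $p\geq 1$ and in a form uniform in $x$, so that the stochastic-Lipschitz coefficient $K_s(x)$ meets the integrability demanded by (C1); this is precisely where the $C^{1+\beta'}$-flow estimate of Theorem \ref{theoGFP}, together with Remark \ref{kun} for negative moments of the inverse first variation, does the essential work, and no further pathwise regularity of $b$ is needed.
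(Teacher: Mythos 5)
Your proposal is correct and takes essentially the same route as the paper: the paper gives no separate proof of Theorem \ref{ClassYM}, stating only that it follows ``as a combination of Theorem \ref{theoGFP} and Theorem \ref{diffY}'', and your argument supplies exactly that combination --- setting $\xi(x)=\phi(X_T^x)$, $\bar g(s,x,y,z)=g(s,X_s^x,y,z)$, and verifying (C1)--(C3) of Assumption \ref{assum4} with $K_s(x)=\Lambda_x|\nabla_x X_s^x|$ controlled by the flow moment bound, mirroring how the paper checks (M1)--(M2) in Theorem \ref{Main}. (A minor remark: the negative moments of $(\nabla_x X)^{-1}$ from Remark \ref{kun} are not needed here, only for the later representation and path-regularity results, but invoking them does no harm.)
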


\begin{thm}\label{Main1}
Suppose Assumptions \ref{asumb2}, (AY) and (AY1) are in force. Then the solution process $(Y^x,Z^x)$ belongs to $\mathbb{L}_{1,2} \times (\mathbb{L}_{1,2})^d$ and a version of $(D_uY_t^x,D_uZ_t^x)_{u,t\in [0,T]}$ is the unique solution to the BSDE \eqref{5.7}
with $D_uX^x$ now satisfies equation \eqref{young2}.

Moreover, $\{ D_tY_t^x: 0\leq t \leq T \}$ is continuous a version of $\{ Z_t^x: 0\leq t \leq T\}$ and 
\begin{align}
	D_u Y_t^x (\nabla_x X_u^x) &= \nabla_x Y_t^x, \label{eq 6.15} \\
	Z_t^x (\nabla_x X_t^x) &= \nabla_x Y_t^x, \label{eq 6.16}\\
	D_u Z_t^x (\nabla_x X_t^x) &= \nabla_x Z_t^x \label{eq 6.17}
\end{align}
\end{thm}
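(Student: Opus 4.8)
The plan is to treat this result as the Hölder-drift counterpart of Theorem \ref{Main}, splitting the argument into the Malliavin part (membership in $\mathbb{L}_{1,2}\times(\mathbb{L}_{1,2})^d$ together with the representation \eqref{5.7}) and the identification of the derivative processes \eqref{eq 6.15}--\eqref{eq 6.17}. First I would reduce the Malliavin differentiability to a direct application of Theorem \ref{Main2}, exactly as in the proof of Theorem \ref{Main}, the only change being that the moment bounds \eqref{eq 6.8} are replaced by the bounds \eqref{5.12} of Proposition \ref{cor 5.7}. Concretely, with $\xi(x)=\phi(X_T^x)$ and $\bar g(\omega,t,x,y,z)=g(t,X_t^x(\omega),y,z)$, assumption (AY1) gives (M1) almost surely, while the chain rule for Malliavin calculus together with (AY) yields $|D_u\xi(x)|\le C|D_uX_T^x|$ and $|D_u\bar g(t,x,y,z)|\le \Lambda_x|D_uX_t^x|(1+|y|+f(|y|)|z|^\alpha)$. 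The bound \eqref{5.12}, which is uniform in $x$, then supplies $\sup_{x}\sup_{u}\mathbb{E}|D_u\xi(x)|^p<\infty$ and $\sup_{t}\int_0^T\mathbb{E}|D_uX_t^x|^{2p}\,\mathrm{d}u<\infty$ for all $p\ge1$, so that (M2) holds with $\tilde K\equiv0$ and $K_u(t)=\Lambda_x|D_uX_t^x|$. Applying Theorem \ref{Main2} gives $(Y^x,Z^x)\in\mathbb{L}_{1,2}\times(\mathbb{L}_{1,2})^d$, the representation \eqref{5.7} (where $D_uX^x$ now solves the Young equation \eqref{young2}), and the fact that $\{D_tY_t^x\}$ is a continuous version of $\{Z_t^x\}$.

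The core new ingredient is the flow identity
\[ D_uX_t^x\,(\nabla_x X_u^x)=\nabla_x X_t^x,\qquad 0\le u\le t\le T, \]
which in the present non-differentiable setting cannot be obtained by differentiating the SDE. Instead I would read it off from the common Young-equation structure of Theorem \ref{Khoa} and its corollary: rewriting \eqref{young1} on $[u,t]$ shows that $s\mapsto\nabla_x X_s^x$ solves, for $s\in[u,t]$, the linear Young equation driven by $\boldsymbol{V}(b,X)$ with value $\nabla_x X_u^x$ at $s=u$; since $\nabla_x X_u^x$ is constant in the $t$-variable, the process $t\mapsto D_uX_t^x\,(\nabla_x X_u^x)$ solves the very same equation by \eqref{young2}, and uniqueness of Young-type equations forces the two to coincide. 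Alternatively, and more in line with the approximation machinery already assembled, one obtains the identity by approximating $b$ with smooth $b_n$ (for which the classical flow relation holds), and passing to the limit via the stability estimate \eqref{5.13} together with the corresponding convergence of the first variation processes from Remark \ref{kun}.

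With the flow identity in hand, the three representations follow from uniqueness of linear BSDEs. Fixing $u$ and multiplying the linear BSDE \eqref{5.7} from the right by the $\mathfrak{F}_u$-measurable matrix $\nabla_x X_u^x$ (which commutes with the stochastic integral on $[t,T]\subset[u,T]$), the terminal condition becomes $\nabla_x\phi(X_T^x)\,D_uX_T^x\,\nabla_x X_u^x=\nabla_x\phi(X_T^x)\nabla_x X_T^x$ and, inserting the flow identity inside the driver, the equation satisfied by $(D_uY_t^x\,\nabla_x X_u^x,\;D_uZ_t^x\,\nabla_x X_u^x)$ coincides term by term with the BSDE \eqref{5.15} solved by $(\nabla_x Y^x,\nabla_x Z^x)$ from Theorem \ref{ClassYM}. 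Since this linear BSDE has stochastic-Lipschitz (BMO) coefficients, the a priori estimate of Lemma \ref{thap} yields uniqueness, whence \eqref{eq 6.15} and \eqref{eq 6.17}. Finally, letting $u\uparrow t$ in \eqref{eq 6.15} and invoking the continuity of $u\mapsto D_uY_t^x$ and $u\mapsto\nabla_x X_u^x$ together with the identity $Z_t^x=D_tY_t^x$ produces \eqref{eq 6.16}.

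The hard part will be the flow identity $D_uX_t^x\,\nabla_x X_u^x=\nabla_x X_t^x$: everything else is a transcription of the bounded-measurable case and of standard linear-BSDE uniqueness, whereas this relation genuinely relies on the pathwise (Young/stochastic-sewing) construction of both $\nabla_x X^x$ and $D_\cdot X^x$ in \cite{Khoa} and on the uniqueness theory for the associated Young-type equations, since $b$ is only Hölder continuous and the classical differentiation-of-the-SDE argument is unavailable.
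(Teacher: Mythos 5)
Your proposal is correct and follows essentially the same route as the paper: the Malliavin part is reduced to Theorem \ref{Main2} (as in Theorem \ref{Main}) using the moment bounds of Proposition \ref{cor 5.7}, and the identities \eqref{eq 6.15} and \eqref{eq 6.17} are obtained by right-multiplying the linear BSDE \eqref{5.7} by $\nabla_x X_u^x$ and invoking unique solvability of the BSDE \eqref{bsde1} for $(\nabla_x Y^x,\nabla_x Z^x)$, with \eqref{eq 6.16} recovered at $u=t$ together with $Z_t^x=D_tY_t^x$. The only difference is that you spell out the flow identity $D_uX_t^x\,\nabla_x X_u^x=\nabla_x X_t^x$ via uniqueness for the Young-type equations \eqref{young1}--\eqref{young2} (or by smooth approximation combined with the stability estimate \eqref{5.13}), a step the paper uses tacitly when matching the terminal value and generator to those of \eqref{bsde1}.
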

\begin{proof}

The first part follows from Theorem \ref{Main}

For the second part, it is enough to establish \eqref{eq 6.15} under the additional assumption on $b$. The relation \eqref{eq 6.16} follows from \eqref{eq 6.15} for $u=t$. We also have
\begin{align*}
	D_u Y_t^x\nabla_x X_u^x &= \nabla_x \phi(X_T^x)D_uX_T^x \nabla_x X_u^x  -\int_t^T D_uZ_s^x \nabla_xX_u^x \mathrm{d}B_s\\
	&+\int_t^T \langle \nabla g(s,\boldsymbol{X}_s^x), D_u\boldsymbol{X}_s^x \nabla_xX_u^x \rangle \mathrm{d}s
\end{align*}
with terminal value and generator satisfying conditions (C1),(C2) and (C3). Then, from the unique solvability of BSDE \eqref{bsde1} we obtain \eqref{eq 6.15} and \eqref{eq 6.17}. 
Moreover, $\{ D_tY_t^x: 0\leq t \leq T \}$ is a version of $\{ Z_t^x: 0\leq t \leq T\}$. We end the proof by observing that the process $ DY$ admits a continuous version, since it is represented in terms of continuous processes $t\rightarrow \nabla Y_t$ and $t\rightarrow(\nabla_x X_t)^{-1}$ (see Theorem \ref{ClassYM} and Theorem \ref{Khoa}). Thus, the existence of a continuous version to $t \mapsto Z_t$ follows.
\end{proof}

\section{Path regularity and explicit convergence rate}\label{NA}
In this section, we study the path regularity and the rate of convergence of a numerical scheme to the FBSDE \eqref{sde}-\eqref{bsde3}.
\subsubsection{Path regularity and Zhang's approximation}
In the sequel, $\Delta_N$ stands for the collection of all partitions of the interval $[0,T]$ by finite families of real numbers. Particular partitions will be denoted by $\delta_N= \{ t_i: 0= t_0 <\cdots < t_N=T \}$ with $N\in \mathbb{N}$. We define the mesh size of partitions as $|\delta_N| =\max_{0\leq i\leq N}|t_{i+1}-t_i|$
\begin{thm}[Path regularity] \label{thmpath1}Suppose Assumption \ref{asumb2} and (AY) be in force. Then for all $p \geq 2$, there exists a positive $C_p$ such that for any partition $\delta_N$ of $[0,T]$ with $(N+1) \in \mathbb{N}$ points and mesh size $|\delta_N|$, such that
\begin{align*}
	\sum_{i=0}^{N-1} \mathbb{E}\Big[ \Big( \int_{t_i}^{t_{i+1}} |Z_t-Z_{t_i}|^2\mathrm{d}t \Big)^{p/2} \Big] \leq C(p)|\delta_N|^{p/2}.
\end{align*}
\end{thm}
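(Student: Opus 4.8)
The plan is to transfer the path regularity of $Z$ to the time-regularity of the forward first variation process and to the BSDE satisfied by the Malliavin derivative of $Y$, using the representations of Theorem \ref{Main1}. By \eqref{eq 6.15}--\eqref{eq 6.17} the control process has the continuous version $Z_t=D_tY_t^x=\nabla_x Y_t^x(\nabla_x X_t^x)^{-1}$, together with $D_uY_t^x=\nabla_x Y_t^x(\nabla_x X_u^x)^{-1}$ and $D_uZ_t^x=\nabla_x Z_t^x(\nabla_x X_u^x)^{-1}$ for $u\le t$. Fixing $x$ and dropping it from the notation, for $t\in[t_i,t_{i+1}]$ I would decompose the increment along the \emph{fixed} Malliavin index $u=t_i$,
\[ Z_t-Z_{t_i}=\nabla Y_t\big[(\nabla X_t)^{-1}-(\nabla X_{t_i})^{-1}\big]+\big(D_{t_i}Y_t-D_{t_i}Y_{t_i}\big)=:A_t^i+B_t^i. \]
Here $A^i$ records only the variation in the Malliavin parameter and is governed by the time-regularity of the forward flow, whereas $B^i$ is the genuine time increment of $t\mapsto D_{t_i}Y_t$, which solves the linear BSDE \eqref{5.7} with $u=t_i$ held fixed.

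For $B^i$, equation \eqref{5.7} gives $B_t^i=-\int_{t_i}^t\langle\nabla g(s,\boldsymbol{X}_s),D_{t_i}\boldsymbol{X}_s\rangle\,\diffns s+\int_{t_i}^t D_{t_i}Z_s\,\diffns B_s$; the decisive point is that $D_{t_i}Z_s=\nabla Z_s(\nabla X_{t_i})^{-1}$ with $(\nabla X_{t_i})^{-1}$ being $\mathfrak{F}_{t_i}$-measurable, hence constant on $[t_i,t]$ and factorising out of the stochastic integral. For the martingale part I would combine $(\int_{t_i}^{t_{i+1}}|\int_{t_i}^t D_{t_i}Z_s\,\diffns B_s|^2\,\diffns t)^{p/2}\le|t_{i+1}-t_i|^{p/2-1}\int_{t_i}^{t_{i+1}}|\int_{t_i}^t D_{t_i}Z_s\,\diffns B_s|^p\,\diffns t$ with the conditional Burkholder--Davis--Gundy inequality and the superadditivity $\sum_i(\int_{t_i}^{t_{i+1}}|\nabla Z_s|^2\,\diffns s)^{p/2}\le(\int_0^T|\nabla Z_s|^2\,\diffns s)^{p/2}$ (valid for $p\ge2$), bounding the martingale contribution to $\sum_i\mathbb{E}[(\int_{t_i}^{t_{i+1}}|B_t^i|^2\,\diffns t)^{p/2}]$ by $C|\delta_N|^{p/2}\,\mathbb{E}[\sup_s|(\nabla X_s)^{-1}|^p(\int_0^T|\nabla Z_s|^2\,\diffns s)^{p/2}]$. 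A single H\"older inequality in $\omega$ makes this finite: the supremum of the inverse first variation has moments of every order by Remark \ref{kun}, while $(\int_0^T|\nabla Z_s|^2\,\diffns s)^{p/2}$ is integrable because $\nabla Z*B$ is a BMO martingale, so $\nabla Z\in\mathcal{H}^q$ for all $q$ by the energy inequalities. The drift part of $B^i$ is of strictly higher order in $|\delta_N|$ and is handled identically from the moment bounds on $\nabla g$, $\nabla X$, $D_{t_i}X$ and $(\nabla X)^{-1}$.

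For $A^i$ I would write $(\nabla X_t)^{-1}-(\nabla X_{t_i})^{-1}=(\nabla X_t)^{-1}(\nabla X_{t_i}-\nabla X_t)(\nabla X_{t_i})^{-1}$ and invoke Theorem \ref{Khoa}: the map $t\mapsto\nabla X_t$ is $\tfrac{1+\beta'}{2}$-H\"older continuous with $\tfrac{1+\beta'}{2}>\tfrac12$. Since the H\"older exponent strictly exceeds $1/2$, together with the increment moment bound $\mathbb{E}|\nabla X_t-\nabla X_{t_i}|^q\le C|t-t_i|^{q(1+\beta')/2}$ and the moments of $\nabla Y$ and $(\nabla X)^{-1}$, this contribution yields a power $|\delta_N|^{p(1+\beta')/2}$ that is strictly smaller than the target and hence negligible. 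Summing the two contributions over $i$ then delivers $\sum_{i}\mathbb{E}[(\int_{t_i}^{t_{i+1}}|Z_t-Z_{t_i}|^2\,\diffns t)^{p/2}]\le C(p)|\delta_N|^{p/2}$, the sharp exponent originating solely from the martingale part of $B^i$.

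The hard part will be precisely that martingale term in $B^i$. Because $\nabla Z$ is only a BMO, not a bounded, integrand, there is no pointwise estimate $\mathbb{E}|Z_t-Z_{t_i}|^p\lesssim|t-t_i|^{p/2}$, and the rate $|\delta_N|^{p/2}$ can be recovered only from the global quantity $\int_0^T|\nabla Z_s|^2\,\diffns s$ after telescoping. Two features make this possible: the factorisation of the $\mathfrak{F}_{t_i}$-measurable $(\nabla X_{t_i})^{-1}$ out of the stochastic integral, which permits conditioning on $\mathfrak{F}_{t_i}$ and pulling the inverse-first-variation supremum out of the whole sum; and the superadditivity of $x\mapsto x^{p/2}$ for $p\ge2$, which collapses the per-interval quadratic variations into the single integrable random variable $(\int_0^T|\nabla Z_s|^2\,\diffns s)^{p/2}$. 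A secondary technical point is the $L^q$-control of the time-increments of $\nabla X$, which must be extracted from the Young/rough-drift analysis underlying Theorem \ref{Khoa}.
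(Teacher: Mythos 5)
Your decomposition is exactly the paper's: since $D_uY_t=\nabla Y_t(\nabla X_u)^{-1}$ by \eqref{eq 6.15}, your $B^i$ is the paper's term $I_1=(\nabla Y_t-\nabla Y_{t_i})(\nabla_x X_{t_i})^{-1}$ and your $A^i$ its $I_2=\nabla Y_t\big((\nabla_x X_t)^{-1}-(\nabla_x X_{t_i})^{-1}\big)$, and your treatment of $B^i$ --- conditioning on $\mathfrak{F}_{t_i}$ so that the adapted factor $(\nabla_x X_{t_i})^{-1}$ comes out, conditional BDG, superadditivity of $x\mapsto x^{p/2}$ to collapse the per-interval quadratic variations into the single variable $\big(\int_0^T|\nabla Z_s|^2\,\mathrm{d}s\big)^{p/2}$, then one H\"older inequality in $\omega$ against $\sup_t|(\nabla_x X_t)^{-1}|^p$, whose moments are supplied by Remark \ref{kun} --- coincides with the paper's handling of $I_1$, including the observation that the sharp exponent comes only from the martingale part.

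Two points diverge, one genuinely and one as an error. First, for $A^i$ the paper does not use Theorem \ref{Khoa} at all: it approximates $b$ by smooth compactly supported $b_n$, writes the linear ODE for $(\nabla_x X_t)^{-1}$, iterates it, removes the drift by Girsanov and invokes \cite[Proposition 3.7]{MMNPZ13} to obtain the moment bound $\mathbb{E}|(\nabla_x X_t)^{-1}-(\nabla_x X_s)^{-1}|^p\le C(\|b\|_\infty)|t-s|^{p/2}$, which already yields the target rate for $I_2$. Your route through the $\frac{1+\beta'}{2}$-H\"older continuity of $t\mapsto\nabla X_t$ would give a strictly better (hence negligible) contribution, but it hinges on the increment moment bound $\mathbb{E}|\nabla X_t-\nabla X_s|^q\le C|t-s|^{q(1+\beta')/2}$, which is \emph{not} contained in Theorem \ref{Khoa} as stated --- almost sure H\"older continuity of paths gives no $L^q$ control of increments without moment bounds on the H\"older seminorm. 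You flag this yourself, and it is plausibly extractable from the stochastic sewing estimates underlying \cite{Khoa}, but as written that step is open, whereas the paper's Girsanov argument closes it with weaker input (boundedness of $b$ suffices). Second, your justification of the integrability of $\big(\int_0^T|\nabla Z_s|^2\,\mathrm{d}s\big)^{p/2}$ is wrong as stated: $\nabla Z*B$ is not known to be a BMO martingale --- the terminal value $\nabla_x\phi(X_T)\nabla_x X_T$ of the linear BSDE \eqref{5.15} is unbounded, so $\nabla Y$ is unbounded and no BMO bound for $\nabla Z*B$ is available. What are BMO are the linearization coefficients $\nabla_y g(\cdot,\boldsymbol{X})*B$ and $\nabla_z g(\cdot,\boldsymbol{X})*B$ (through $Z\in\mathcal{H}_{\text{BMO}}$ and assumption (AY)), and the needed moments $\|\nabla Z\|_{\mathcal{H}^{2p}}<\infty$ then follow from Lemma \ref{thap0} applied to \eqref{5.15}, which is precisely how the paper bounds its quantity $\mathcal{X}_{[0,T]}$; with that substitution, and with the $A^i$ estimate replaced or completed as above, your argument becomes the paper's proof.
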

\begin{proof}\leavevmode
 We first assume that the functions $g$ and $\phi$ are continuously differentiable in $x,y$ and $z$.
For $t\in [t_i, t_{i+1}]$, using the representation \eqref{eq 6.16}, we have
\begin{align*}
	Z_t -Z_{t_i} &= \nabla Y_t(\nabla_xX_t)^{-1} - \nabla Y_{t_i}(\nabla_xX_{t_i})^{-1}\\
	&= \Big( \nabla Y_t -\nabla Y_{t_i} \Big)(\nabla_xX_{t_i})^{-1}  + \nabla Y_{t}\Big( (\nabla_xX_t)^{-1} -(\nabla_xX_{t_i})^{-1}  \Big)\\
	&= I_1 + I_2.
\end{align*}
Thus from H\"{o}lder's inequality, we have
\begin{align*}
	\mathbb{E}\Big[ \Big( \int_{t_i}^{t_{i+1}} |Z_t- Z_{t_i}|^2 \mathrm{d}t \Big)^{p/2} \Big] &\leq |\delta_N|^{p/2-1} \int_{t_i}^{t_{i+1}} \mathbb{E}\Big[|Z_t- Z_{t_i}|^{p}\Big] \mathrm{d}t \\
	&\leq |\delta_N|^{p/2-1} \int_{t_i}^{t_{i+1}} \mathbb{E}\left[ |I_1|^{p} + |I_2|^{p}  \right] \mathrm{d}t.
\end{align*} 

Let us first focus on the term  $I_2$. We first assume that $b$ is a bounded and compactly supported smooth function.  Indeed, if $b$ is not differentiable, then, by denseness of the set of compactly supported and differentiable functions in the set of bounded functions, there exists a sequence $(b_n)_{n\ge 1}$ of compactly supported and smooth functions converging to $b$  a.e. on $[0, T]\times \mathbb{R}^d$ and the desired result is obtained from the Vitali's convergence theorem. 
Then, the map $t\mapsto (\nabla_x X_t)^{-1}$ satisfies the following linear equation :
\begin{align*}
	\begin{cases}
		\displaystyle \frac{\mathrm{d} (\nabla_x X_t)^{-1}}{\mathrm{d}t} &= -(\nabla_x X_t)^{-1} b'(t,X_t),\\
		(\nabla_x X_0)^{-1} &= I_{d}.
	\end{cases}
\end{align*}
Iterating the above equation gives for all $0 \leq s \leq t$  
\[  |(\nabla_x X_t)^{-1} - (\nabla_x X_s)^{-1}| = \Big|\sum_{k=1}^{\infty} \int_{s<s_1<\ldots<s_n<t}  b'(s_1,X_{s_1}^x):\cdots: b'(s_k,X_{s_k}^x)\mathrm{d}s_1\cdots\mathrm{d}s_k \Big|,\]
where the symbol $":"$ stands for the matrix multiplication.

For any $p\geq 2$, the Girsanov's theorem and H\"{o}lder's inequality yield 
\begin{align*}
	&\mathbb{E}|(\nabla_x X_t)^{-1} - (\nabla_x X_s)^{-1}|^p\\
	&= \mathbb{E}\Big[ \Big| \sum_{k=1}^{\infty} \int_{s<s_1<\ldots<s_n<t}  b'(s_1,B_{s_1}^x):\cdots: b'(s_k,B_{s_k})\mathrm{d}s_1\cdots\mathrm{d}s_k  \Big|^p \times \mathcal{E}\Big( \int_{0}^{t} b(v,B_v)\mathrm{d}v  \Big)\Big]\\
	&\leq C(\|b\|_{\infty}) \mathbb{E}\Big[ \Big| \sum_{k=1}^{\infty} \int_{s<s_1<\ldots<s_n<t}  b'(s_1,B_{s_1}^x):\cdots: b'(s_k,B_{s_k})\mathrm{d}s_1\cdots\mathrm{d}s_k  \Big|^{2p} \Big]^{1/2}.
\end{align*}
Using \cite[Proposition 3.7]{ MMNPZ13},we have 
\[ \mathbb{E}|(\nabla_x X_t)^{-1} - (\nabla_x X_s)^{-1}|^p \leq C(\|b\|_{\infty}) |t-s|^{p/2}  \]
for all $p \geq 2$, where $C:[0,\infty)\rightarrow [0,\infty)$ is an increasing,continuous function, $\|\cdot\|$ is a matrix-norm on $\mathbb{R}^{d\times d}$ and $\|\cdot\|_\infty$ the supremum norm. Thus, the bound remains valid for only bounded and measurable drift $b.$
Therefore, 
\begin{align*}
	\mathbb{E}\left[ |I_2|^{p}  \right] &\leq C \Big( \mathbb{E}\Big[\sup_{0\leq t\leq T} |\nabla_x Y_t|^{2p}\Big]  \Big)^{\frac{1}{2}} \times \Big( \mathbb{E}\Big[| (\nabla_x X_t)^{-1} -(\nabla_x X_{t_i})^{-1}|^{2p}\Big]  \Big)^{\frac{1}{2}}\\
	&\leq C |\delta_N|^{p/2}.
\end{align*}
Let us turn now on $I_1$. We also claim:
\begin{align*}
	\delta_N^{p/2-1}\sum_{i=0}^{N-1}\int_{t_i}^{t_{i+1}} \mathbb{E}\left[ |I_1|^{p}\mathrm{d}t\right]\leq C |\delta_N|^{p/2} .
\end{align*}
Indeed, noticing that $(\nabla_x X_{t_i})^{-1}$ is $\mathfrak{F}_{t_i}\text{-adapted}$ and using the tower property
\begin{align*}
	\mathbb{E}[|(\nabla_x Y_t - \nabla_xY_{t_i}) (\nabla_x X_{t_i}^x)^{-1}|^p] = \mathbb{E}\left[ \mathbb{E}[|\nabla_x Y_t -\nabla_x Y_{t_i}|^{p}/\mathfrak{F}_{t_i}]|(\nabla_x X_{t_i})^{-1}|^{p}\right]
\end{align*}
By writing the equation satisfied by the difference $\nabla Y_t - \nabla Y_{t_i}$ for all $t_{i} \leq t \leq t_{i+1}$ and using the conditional BDG's inequality, we obtain
\begin{align*}
	&\mathbb{E}\Big[|\nabla_x Y_t -\nabla_x Y_{t_i}|^{2p}/\mathfrak{F}_{t_i} \Big]\\
	\leq& C \mathbb{E}\Big[ \Big| \int_{t_i}^{t} \langle \nabla g(s,\boldsymbol{X}_s), \nabla \boldsymbol{X}_s  \rangle \mathrm{d}s \Big|^{2p} +  \Big|\int_{t_i}^{t} \nabla Z_s \mathrm{d}B_s \Big|^{2p} \Big/\mathfrak{F}_{t_i} \Big] \\
	\leq	& C \mathbb{E}\Big[ \Big( \int_{t_i}^{t_{i+1}} |\langle \nabla g(s,\boldsymbol{X}_s), \nabla \boldsymbol{X}_s\rangle| \mathrm{d}s \Big)^{2p} +  \Big(\int_{t_i}^{t_{i+1}} |\nabla Z_s|^2 \mathrm{d}s \Big)^{p} \Big/\mathfrak{F}_{t_i}  \Big] :=C\X_{[t_i, t_{i+1}]}.
\end{align*}
Thus,
\begin{align*}
	&\delta_N^{p/2-1}\sum_{i=0}^{N-1}\int_{t_i}^{t_{i+1}} \mathbb{E}\left[ |I_1|^{p}\right]\mathrm{d}t\\
	&\leq C|\delta_N|^{p/2}\sum_{i=0}^{N-1} \mathbb{E}\left[ \X_{[t_i, t_{i+1}]} |(\nabla_x X_{t_i})^{-1}|^{p}\right]\\
	&\leq C |\delta_N|^{p/2} \mathbb{E}\Big[ \sup_{0\leq t \leq T}|(\nabla_x X_{t})^{-1}|^{p} \sum_{i=0}^{N-1}\X_{[t_i, t_{i+1}]}\Big]\\
	&\leq C |\delta_N|^{p/2}\mathbb{E}\Big[ \sup_{0\leq t \leq T}|(\nabla_x X_{t})^{-1}|^{p} \X_{[0,T]}\Big]
\end{align*}
The proof is complete if we prove that $\X_{[0,T]}$ is finite almost surely, since $\mathbb{E} \sup_{0\leq t \leq T}|(\nabla_x X_{t})^{-1}|^{p} < \infty$, thanks to Remark \ref{kun}. We recall that 
\begin{align*}
\X_{[0,T]} = \mathbb{E}\Big[ \Big( \int_{0}^{T} |\langle \nabla g(t,\boldsymbol{X}_t), \nabla \boldsymbol{X}_t \rangle| \mathrm{d}t \Big)^{2p} +  \Big(\int_{0}^{T} |\nabla Z_t|^2 \mathrm{d}t \Big)^{p}  \Big]:= II_1 + II_2, 	
\end{align*}
and $(\nabla Y, \nabla Z)$ is the unique solution to \eqref{5.15}. Then, from Lemma \ref{thap0}. for all $p \geq 2$ there exists $q \in (1,\infty)$ only depending on $\|Z*B\|_{BMO}$ such that 
\begin{align*}
	\mathbb{E} \left[ \sup_{0\leq t \leq T}  |\nabla Y_t|^{2p} + \Big( \int_0^T |\nabla Z_t|^2 \mathrm{d}t\Big)^p  \right] \leq \mathbb{E} \left[ |\nabla \phi(X_T) \nabla X_T
	|^{2pq} + \Big( \int_0^T |\nabla_x g(s,\boldsymbol{X}_t) \nabla_x X_t| \mathrm{d}t  \Big)^{2pq}   \right]^{\frac{1}{q}} .
\end{align*}
From the (AY) we deduce that, for any $\gamma \geq 2$
\begin{align*}
&\mathbb{E}\Big( \int_0^T |\nabla_x g(t,\boldsymbol{X}_t) \nabla_x X_t| \mathrm{d}t  \Big)^{\gamma} \\
&\leq \mathbb{E}\Big( \int_0^T \Lambda_x(1 +|Y_t| + f(|Y_t|)|Z_t|^{\alpha}) |\nabla_x X_t| \mathrm{d}t  \Big)^{\gamma}\\
&\leq C  \mathbb{E}\Big( \sup_{0\leq t\leq T}|\nabla_x X_t| \int_0^T (1 +|Y_t| + |Z_t|^2) \mathrm{d}t  \Big)^{\gamma}\\
&\leq C  \mathbb{E}( \sup_{0\leq t\leq T}|\nabla_x X_t|^{2\gamma} ) + C  \mathbb{E}\Big( \int_0^T (1 +|Y_t| + |Z_t|^2) \mathrm{d}t  \Big)^{2\gamma}
\end{align*}
where we used the local boundedness of $f$, the bound of the process $Y$ and the inequality $|ab| \leq a^2 + b^2$ to obtain the last inequality. From Theorem \ref{theoGFP}  and Lemma \ref{thap0}, we deduce the existence of a constant $C$ only depending on the coefficients appearing in assumption (AY), such that for any $\gamma \geq 2$
\[  \mathbb{E}\Big( \int_0^T |\nabla_x g(t,\boldsymbol{X}_t) \nabla_x X_t| \mathrm{d}t  \Big)^{\gamma} 	\leq C \]
and this implies $II_2 \leq C(1 + \Lambda_{\phi}^{2p})$. Let us now focus on the term $II_1.$ From the assumptions of the theorem again, we deduce
\begin{align*}
	&II_1\\
	&\leq \mathbb{E}\left( \int_{0}^{T} \big(|\nabla_x g(t,\boldsymbol{X}_t)| |\nabla_x{X}_t| + |\nabla_y g(t,\boldsymbol{X}_t)| |\nabla_x{Y}_t| + |\nabla_z g(t,\boldsymbol{X}_t)| |\nabla_x{Z}_t| \big) \mathrm{d}t \right)^{2p} \\
	&\leq C + \mathbb{E}\left( \int_{0}^{T} \big(\Lambda_y(1+ |Z_t|^{\alpha}) |\nabla_x{Y}_t| + \Lambda_z(1+ f(|Y_t|)|Z_t|) |\nabla_x{Z}_t| \big) \mathrm{d}t \right)^{2p}\\
	&\leq C + \mathbb{E}\left( \sup_{0\leq t\leq T}|\nabla_x{Y}_t|^2 + \Big(\int_{0}^{T}|Z_t|^2\mathrm{d}t\Big)^2 + \int_{0}^{T}|\nabla_x Z_t|^2\mathrm{d}t   \right)^{2p}\\
	&\leq C (1 + \Lambda_{\phi}^{2p})
\end{align*}
\end{proof}
Below, under much weaker assumptions ( the drift $b$ is only H\"older continuous) we provide the celebrated Zhang's approximation theorem
\begin{cor}[Zhang's path regularity theorem]
Under the assumptions of Theorem \ref{thmpath1} , we deduce the following:
\begin{align*}
	\sum_{i=0}^{N-1} \mathbb{E}\Big[ \int_{t_i}^{t_{i+1}} |Z_t-\tilde{Z}_{t_i}^{\delta_N}|^2\mathrm{d}t \Big] \leq C|\delta_N|,
\end{align*}
where
\begin{equation}\label{1.4}
	\tilde{Z}_{t_i}^{\delta_N} = \frac{1}{t_{i+1} - t_i}\mathbb{E}\Big[ \int_{t_{i}}^{t_{i+1}} Z_s \mathrm{d}s \Big/ \mathfrak{F}_{t_i} \Big],
\end{equation}
is a family of random variables defined for all partition points $t_i$ of $\delta_N$ and $Z$ is the control process in the solution of FBSDE \eqref{sde}-\eqref{bsde3}.
\end{cor}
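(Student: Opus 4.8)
The plan is to exploit the least-squares (orthogonal projection) characterisation of the conditional expectation defining $\tilde Z_{t_i}^{\delta_N}$ and then reduce the statement to Theorem \ref{thmpath1} in the case $p=2$. First I would record that, on each subinterval, $\tilde Z_{t_i}^{\delta_N}$ is exactly the orthogonal projection in $L^2(\Omega\times[t_i,t_{i+1}])$ of $Z$ onto the closed subspace of time-independent $\mathfrak{F}_{t_i}$-measurable random variables. Concretely, for every $\mathfrak{F}_{t_i}$-measurable $\zeta$ I would establish the Pythagorean identity
\begin{align*}
\mathbb{E}\int_{t_i}^{t_{i+1}}|Z_t-\zeta|^2\mathrm{d}t = \mathbb{E}\int_{t_i}^{t_{i+1}}|Z_t-\tilde Z_{t_i}^{\delta_N}|^2\mathrm{d}t + (t_{i+1}-t_i)\,\mathbb{E}\big|\tilde Z_{t_i}^{\delta_N}-\zeta\big|^2 .
\end{align*}
This follows by expanding the square and checking that the cross term vanishes: since $\tilde Z_{t_i}^{\delta_N}-\zeta$ is $\mathfrak{F}_{t_i}$-measurable and constant in $t$, the tower property together with the defining relation $\mathbb{E}[\int_{t_i}^{t_{i+1}}Z_t\,\mathrm{d}t\mid\mathfrak{F}_{t_i}]=(t_{i+1}-t_i)\tilde Z_{t_i}^{\delta_N}$ forces $\mathbb{E}\langle\int_{t_i}^{t_{i+1}}(Z_t-\tilde Z_{t_i}^{\delta_N})\,\mathrm{d}t,\,\tilde Z_{t_i}^{\delta_N}-\zeta\rangle=0$. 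In particular $\tilde Z_{t_i}^{\delta_N}$ minimises $\zeta\mapsto\mathbb{E}\int_{t_i}^{t_{i+1}}|Z_t-\zeta|^2\mathrm{d}t$ over all $\mathfrak{F}_{t_i}$-measurable $\zeta$.

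I would then invoke this minimality with the specific choice $\zeta=Z_{t_i}$. This is legitimate because, by Theorem \ref{Main1}, the control process admits the continuous version $Z_t=D_tY_t$, so that $Z_{t_i}$ is a well-defined $\mathfrak{F}_{t_i}$-measurable random variable. The projection inequality then yields, for each $i$,
\begin{align*}
\mathbb{E}\int_{t_i}^{t_{i+1}}|Z_t-\tilde Z_{t_i}^{\delta_N}|^2\mathrm{d}t \leq \mathbb{E}\int_{t_i}^{t_{i+1}}|Z_t-Z_{t_i}|^2\mathrm{d}t .
\end{align*}
Summing over $i=0,\dots,N-1$ and applying Theorem \ref{thmpath1} with $p=2$ produces the claimed estimate $\sum_{i=0}^{N-1}\mathbb{E}\int_{t_i}^{t_{i+1}}|Z_t-\tilde Z_{t_i}^{\delta_N}|^2\mathrm{d}t\leq C|\delta_N|$ with $C=C(2)$.

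Since the serious analytic work—controlling $\mathbb{E}|Z_t-Z_{t_i}|^p$ via the decomposition $Z_t=\nabla Y_t(\nabla_xX_t)^{-1}$ together with the H\"older-in-time bound on $(\nabla_xX_t)^{-1}$ and the a priori estimate of Lemma \ref{thap0}—is already carried out in Theorem \ref{thmpath1}, I expect the corollary to be a soft consequence with no genuine obstacle. The only point requiring a little care is the justification that $Z_{t_i}$ is meaningful pointwise in time, which is guaranteed by the existence of the continuous version of $Z$ established in Theorem \ref{Main1}; once that is in hand, the remainder is purely the Hilbert-space geometry of conditional expectation.
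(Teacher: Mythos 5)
Your proposal is correct and follows essentially the same route as the paper: identify $\tilde Z_{t_i}^{\delta_N}$ as the best $\mathfrak{F}_{t_i}$-measurable $L^2$-approximation of $Z$ on $[t_i,t_{i+1}]$, compare with the admissible choice $\zeta=Z_{t_i}$, and conclude by Theorem \ref{thmpath1} with $p=2$. Your explicit verification of the Pythagorean identity and the remark that $Z_{t_i}$ is pointwise meaningful thanks to the continuous version from Theorem \ref{Main1} merely spell out details the paper leaves implicit.
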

\begin{proof}
It is well known that the random variable $	\tilde{Z}_{t_i}^{\delta_N}$ is the best $\mathfrak{F}_{t_i}\text{-adapted } \mathcal{H}^2([t_i,t_{i+1}]) $ approximation of $Z$, i.e., 
\begin{align*}
	\mathbb{E}\Big[ \int_{t_{i}}^{t_{i+1}} |Z_s-\tilde{Z}_{t_i}^{\delta_N}|^2 \mathrm{d}s  \Big] = \inf_{\tiny Z_i \in L^2(\Omega,\mathfrak{F}_{t_i})} \mathbb{E}\Big[ \int_{t_{i}}^{t_{i+1}} |Z_s-Z_{i}|^2 \mathrm{d}s \Big]. 
\end{align*}
In particular,
\begin{align*}
	\mathbb{E}\Big[ \int_{t_{i}}^{t_{i+1}} |Z_s-\tilde{Z}_{t_i}^{\delta_N}|^2 \mathrm{d}s  \Big] \leq \mathbb{E}\Big[ \int_{t_{i}}^{t_{i+1}} |Z_s-Z_{t_i}|^2 \mathrm{d}s  \Big].
\end{align*}
The result then follows from the Theorem \ref{thmpath1} by taking $p=2$.
\end{proof}
\begin{lemm}\label{lempath1}
	Under Assumptions \ref{asumb2} and (AY), we obtain for all $p> 1$
	\begin{align}
		\mathbb{E}	\Big[ \sup_{0\leq t\leq T } |Z_t|^{2p}  \Big] &< \infty. \label{1.2}
	\end{align}
	In addition for all $p \geq 2$, there exists a positive constant $C_p > 0$, such that for $0\leq s\leq t \leq T$
	\begin{align*}
		\mathbb{E}\Big[ \sup_{s\leq r\leq t }|Y_r-Y_s|^{p}  \Big]\leq C_p |t-s|^{p/2}.
	\end{align*}
\end{lemm}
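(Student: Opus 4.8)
The plan is to derive both assertions from the representation \eqref{eq 6.16}, which identifies the control process as $Z_t = \nabla_x Y_t\,(\nabla_x X_t)^{-1}$, combined with the moment bounds already available for the first variation process, its inverse, and the derivative $\nabla Y$. The supremum bound \eqref{1.2} will be the crux; once it is in hand, the H\"older regularity of $Y$ follows from the BSDE dynamics.

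First I would prove \eqref{1.2}. Using \eqref{eq 6.16} pointwise in $t$ and factorising the supremum,
\begin{align*}
\sup_{0\leq t\leq T}|Z_t|^{2p} \leq \Big(\sup_{0\leq t\leq T}|\nabla_x Y_t|\Big)^{2p}\Big(\sup_{0\leq t\leq T}|(\nabla_x X_t)^{-1}|\Big)^{2p},
\end{align*}
so that the Cauchy--Schwarz inequality yields
\begin{align*}
\mathbb{E}\Big[\sup_{0\leq t\leq T}|Z_t|^{2p}\Big] \leq \Big(\mathbb{E}\Big[\sup_{0\leq t\leq T}|\nabla_x Y_t|^{4p}\Big]\Big)^{1/2}\Big(\mathbb{E}\Big[\sup_{0\leq t\leq T}|(\nabla_x X_t)^{-1}|^{4p}\Big]\Big)^{1/2}.
\end{align*}
The second factor is finite by Remark \ref{kun}, and the first is finite because $(\nabla Y,\nabla Z)$ solves the linear BSDE \eqref{5.15}, so that Lemma \ref{thap0} supplies finite moments of every order for $\sup_t|\nabla Y_t|$; in fact this finiteness was already established inside the proof of Theorem \ref{thmpath1}. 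This gives \eqref{1.2} for all $p>1$.

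For the H\"older estimate, fix $0\leq s\leq t\leq T$. From \eqref{bsde3}, for $s\leq r\leq t$ one has
\begin{align*}
Y_r - Y_s = -\int_s^r g(u,X_u,Y_u,Z_u)\,\mathrm{d}u + \int_s^r Z_u\,\mathrm{d}B_u,
\end{align*}
so $\mathbb{E}[\sup_{s\leq r\leq t}|Y_r-Y_s|^p]$ is bounded, up to a constant, by the $p$-th moment of $\int_s^t|g|\,\mathrm{d}u$ plus $\mathbb{E}[\sup_{s\leq r\leq t}|\int_s^r Z_u\,\mathrm{d}B_u|^p]$. Since $Y\in\mathcal{S}^{\infty}$ and $f$ is non-decreasing and locally bounded, (AY) gives $|g(u,X_u,Y_u,Z_u)|\leq C(1+|Z_u|+|Z_u|^2)$, whence $\int_s^t|g|\,\mathrm{d}u\leq C|t-s|\,(1+\sup_u|Z_u|+\sup_u|Z_u|^2)$; raising to the power $p$, taking expectations and using \eqref{1.2} bounds the drift term by $C|t-s|^p\leq CT^{p/2}|t-s|^{p/2}$. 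For the stochastic integral the Burkholder--Davis--Gundy inequality gives
\begin{align*}
\mathbb{E}\Big[\sup_{s\leq r\leq t}\Big|\int_s^r Z_u\,\mathrm{d}B_u\Big|^p\Big] \leq C_p\,\mathbb{E}\Big[\Big(\int_s^t|Z_u|^2\,\mathrm{d}u\Big)^{p/2}\Big] \leq C_p|t-s|^{p/2}\,\mathbb{E}\Big[\sup_{0\leq u\leq T}|Z_u|^p\Big],
\end{align*}
which is at most $C_p|t-s|^{p/2}$ by \eqref{1.2}. Adding the two contributions gives the claim.

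The main obstacle is the supremum bound \eqref{1.2}: uniform control of $\sup_t|Z_t|$ is not automatic for quadratic BSDEs and here relies entirely on the representation \eqref{eq 6.16} together with the availability of \emph{negative-order} moments of the first variation process (Remark \ref{kun}). The latter is itself delicate, being accessible only because the It\^{o}--Tanaka/Kolmogorov transform turns the rough drift into smooth coefficients for the auxiliary SDE \eqref{auxi}. Once \eqref{1.2} is secured, the H\"older estimate for $Y$ reduces to a routine combination of the Burkholder--Davis--Gundy inequality and the growth condition (AY).
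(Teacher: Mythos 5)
Your proposal is correct and follows essentially the same route as the paper: both prove \eqref{1.2} from the representation $Z_t=\nabla_x Y_t(\nabla_x X_t)^{-1}$ (the paper phrases it via $D_sY_t=\nabla_x Y_t(\nabla_x X_s)^{-1}$ and sets $s=t$) together with Cauchy--Schwarz, the moment bounds for $\nabla Y$ from the linear BSDE \eqref{5.15} via Lemma \ref{thap0}, and the negative/inverse-moment bound of Remark \ref{kun}. The H\"older estimate is likewise identical in substance: BDG plus the quadratic growth bound on $g$ and the boundedness of $Y$, with \eqref{1.2} controlling the resulting moments of $\sup_t|Z_t|$.
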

\begin{proof} Under the assumptions of the Lemma, it follows from Lemma \ref{Main1} that the process $\{\nabla_x Y_t (\nabla_x X_s)^{-1}:0\leq s\leq t\leq T \}$ is a version of $\{ D_sY_t:0\leq s\leq t\leq T\}$. Using H\"{o}lder inequality and Remark \ref{kun}, for any $p > 1$ we deduce that: 
	\begin{align*}
		\mathbb{E}[\sup_{0\leq s\leq t\leq T}|D_sY_t|^{2p}]&\leq  \mathbb{E}\Big[ \sup_{0\leq t\leq T} |\nabla_x Y_t|^{4p} \Big]^{\frac{1}{2}}	\mathbb{E}\Big[ \sup_{0\leq s\leq T} |(\nabla_x X_s)^{-1}|^{4p} \Big]^{\frac{1}{2}}\\
		&\leq \|\nabla_x Y\|_{\mathcal{S}^{2p}} \mathbb{E}\Big[ \sup_{0\leq s\leq T}|(\nabla_x X_s)^{-1}|^{4p} \Big]^{\frac{1}{2}}<\infty,
	\end{align*}
	In particular, for $s=t$ we obtain the bound \eqref{1.2}.
	On the other hand, we recall that for all $ s\leq v \leq t $,
	\[ Y_s = \phi(X_t) + \int_s^t g(v,X_v,Y_v,Z_v)\mathrm{d}v -\int_s^t Z_v \mathrm{d}B_v. \]
	Then from BDG inequality we deduce that 
	\begin{align*}
		\mathbb{E}\Big[ \sup_{s\leq r\leq t }|Y_r-Y_s|^{p}  \Big]\leq C(p) \Big\{ \mathbb{E}\Big( \int_s^t |g(v,X_v,Y_v,Z_v)|\mathrm{d}v  \Big)^{p}  + \mathbb{E}\Big( \int_{s}^{t} |Z_v|^2 \mathrm{d}v\Big)^{p/2}   \Big\}.
	\end{align*}
	By using the bound: $|g(v,X_v.Y_v.Z_v)|\leq K(1+ |Y_v| + (1+f(|Y_v|))|Z_v|^2)$ and the fact that $Y_v$ is bounded we obtain that:
	\begin{align*}
		\mathbb{E}\Big[ \sup_{s\leq r\leq t }|Y_r-Y_s|^{p}  \Big] \leq C(p)\Big\{ |t-s|^{p} + \mathbb{E}\Big[ \Big( \int_{s}^{t} (1+ f(|Y_v|))|Z_v|^2\mathrm{d}v \Big)^{p} + \Big( \int_{s}^{t} |Z_v|^2 \mathrm{d}v\Big)^{p/2}  \Big]  \Big\}.
	\end{align*} 
	From to the local boundedness of $f$ and the bound \eqref{1.2}, we deduce the following
	\begin{align*}
		&\mathbb{E}\Big[ \sup_{s\leq r\leq t }|Y_r-Y_s|^{p}  \Big]\notag\\
		\leq& C(p)\Big\{ |t-s|^{p} + |t-s|^{p}\mathbb{E} [\sup_{s\leq v\leq t}|Z_v|^{2p}] +|t-s|^{p/2}\mathbb{E} [\sup_{s\leq v\leq t}|Z_v|^{2p}]  \Big\}\\
		\leq	& C(p)\Big\{ |t-s|^{p} + |t-s|^{p/2} \Big\}.
	\end{align*}
	The proof is completed.
\end{proof}
\subsubsection{Numerical approximation}
Let us introduce the following family of truncated FBSDE
\begin{align}\label{TruncY}
Y_t^n = \phi(X_T) + \int_{t}^{T}g_n(s,X_s,Y_s^n,Z_s^n)\mathrm{d}s -\int_{t}^{T} Z_s^n\mathrm{d}B_s,
\end{align}
where \[ g_n(t,x,y,z) = g(t,x,\tilde{\rho}_n(y),{\rho}_n(z)) \text{ for } (t,x,y,z) \in  [0,T]\times \mathbb{R}^d\times \mathbb{R} \times \mathbb{R}^d,\,\, n\in \mathbb{N}, \]
with $\tilde{\rho}_n$ given by \eqref{trunc}, $\rho_n: \mathbb{R}^d \rightarrow \mathbb{R}^d, z\mapsto \rho_n(z)= (\tilde{\rho}_n(z_1),\cdots,\tilde{\rho}_n(z_d))$ and $X$ stands for the solution to the SDE \eqref{sde}. It is clear that \eqref{TruncY} satisfies assumptions of Theorem \ref{Main} provided \eqref{sde} and \eqref{bsde3} do satisfy them. Using similar arguments as in the proof of Theorem \ref{th1} and Lemma \ref{lemma 5.1}, we have 
\begin{align}
\max \Big\{ \sup_{n\in \mathbb{N}} \|Z^n*B\|_{BMO}, \|Z*B\|_{BMO}  \Big\} \leq \Upsilon^{(2)}.
\end{align}
Thus for all $n\in \mathbb{N}$ the sequence $(Z^n*B)_{n\in \mathbb{N}}$ satisfies the properties (P2) of Lemma \ref{lem 2.1} i.e. there is a universal constant $r> 1$ such that $\mathcal{E}(Z^n*B) \in L^r$. Then, we deduce that $(Y^n,Z^n)$ is differentiable in the sense of Theorem \ref{ClassYM} and the following uniform bounds hold: 
$ \sup_{n \in \mathbb{N}} \Big(\|\nabla Y^n\|_{\mathcal{S}^{2p}} + \|\nabla Z^n\|_{\mathcal{H}^{2p}}\Big) < \infty $. Moreover, we can show as in \eqref{1.2} that
\begin{align}\label{ineq 6.7}
\sup_{n \in \mathbb{N}}  \mathbb{E} \big[ \sup_{t \in [0,T]} |Z_t^n|^{2p} \big] < \infty.
\end{align}
Furthermore, the properties of $\tilde{\rho}$ yield
\begin{equation}\label{ineq 6.8}
\Big(|\tilde{\rho}(Y_s^n)-Y_s^n| + |{\rho}(Z_s^n)-Z_s^n|\Big)^2\leq 8 ((\Upsilon^{(1)})^2 \mathbb{I}_{\{ |Y^n_s| > n\}} +|Z_s^n|^2\mathbb{I}_{\{ |Z^n_s| > n\}}) ,
\end{equation}
where $\sup_{n \in \mathbb{N}} |Y_s^n| \leq \Upsilon^{(1)}$( see Theorem \ref{th1}). 
Indeed, from definition of the function $\tilde{\rho}$ we deduce the following:
\begin{align*}
|\tilde{\rho}(Y_s^n)-Y_s^n| = \mathbb{I}_{\{|Y_s^n|>n\}}|\tilde{\rho}(Y_s^n)-Y_s^n| + \mathbb{I}_{\{|Y_s^n|\leq n\}}|\tilde{\rho}(Y_s^n)-Y_s^n| \leq 2|Y_s^n| \mathbb{I}_{\{|Y_s^n|>n\}} 
\end{align*}
Using the same reasoning as above we deduce that $|{\rho}(Z_s^n)-Z_s^n|\leq 2|Z_s^n| \mathbb{I}_{\{|Z_s^n|>n\}}$. 

Below we provide the convergence error of the truncation 
\begin{thm}\label{thmconve1}
Let assumptions of Theorem \ref{Main} be in force. Let $(X,Y,Z)$ be the solution to equation \eqref{sde}-\eqref{bsde3} and $(X,Y^n,Z^n)$ be the solution of \eqref{sde}-\eqref{TruncY}, $n\ge 1$. Then for any $p > 1$ and $\kappa \geq 1$ there exist a positive finite constant $C(p,k)$ depending on $p,\kappa,$, \eqref{ineq 6.7}, the bound in Lemma \ref{lemma 5.1} and $\Upsilon^{(1)}$ such that for $n\in \mathbb{N}$
\begin{align*}
	\mathbb{E}\Big[ \sup_{t\in [0,T]} |Y_t^n -Y_t|^{2p} + \Big( \int_{0}^{T} |Z^n_s -Z_s|^2 \mathrm{d}s\Big)^p  \Big] \leq C(p,\kappa) (n)^{\frac{-\kappa}{4q}}
\end{align*}
\end{thm}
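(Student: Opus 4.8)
The plan is to run the a priori comparison of Lemma \ref{thap} between the truncated pair $(Y^n,Z^n)$ and the genuine solution $(Y,Z)$, both carrying the \emph{same} terminal condition $\phi(X_T)$, and then to show that the only source of error, the integrated driver difference evaluated along the true solution, decays polynomially because it is supported on the rare event $\{|Z|>n\}$. The moment machinery of Section \ref{NA} (notably the bound \eqref{1.2}) is what powers the decay.

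First I would record that $g$ and $g_n$ both satisfy Assumptions \ref{assum1}--\ref{assum2} with the same constants and the same $f$, and that $\|Z^n*B\|_{BMO}$ and $\|Z*B\|_{BMO}$ are bounded uniformly by $\Upsilon^{(2)}$; hence Lemma \ref{thap} applies with a single exponent $q\in(1,\infty)$ common to all $n$. Taking $(Y^1,Z^1)=(Y^n,Z^n)$ with generator $g_n$ and $(Y^2,Z^2)=(Y,Z)$ with generator $g$, and noting that the terminal values coincide, Lemma \ref{thap} gives
\begin{align*}
\mathbb{E}\Big[\sup_{t\in[0,T]}|Y_t^n-Y_t|^{2p}+\Big(\int_0^T|Z_s^n-Z_s|^2\mathrm{d}s\Big)^p\Big]
\le C\,\mathbb{E}\Big[\Big(\int_0^T|g_n(s,X_s,Y_s,Z_s)-g(s,X_s,Y_s,Z_s)|\mathrm{d}s\Big)^{2pq}\Big]^{\frac1q},
\end{align*}
so that everything reduces to controlling the driver difference along $(Y,Z)$.

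Second, I would bound the integrand pointwise. Since $\|Y\|_{\mathcal{S}^\infty}\le\Upsilon^{(1)}$, for every $n>\Upsilon^{(1)}$ one has $\tilde{\rho}_n(Y_s)=Y_s$, so only the truncation of $Z$ survives and $g_n(s,X_s,Y_s,Z_s)-g(s,X_s,Y_s,Z_s)=g(s,X_s,Y_s,\rho_n(Z_s))-g(s,X_s,Y_s,Z_s)$. Invoking the $z$-part of (AY) together with $|\rho_n(Z_s)|\le|Z_s|$, $f(|Y_s|)\le\varphi(\Upsilon^{(1)})$, and $|\rho_n(Z_s)-Z_s|\le 2|Z_s|\mathbb{I}_{\{|Z_s|>n\}}$ from \eqref{ineq 6.8}, I obtain
\[
|g_n(s,X_s,Y_s,Z_s)-g(s,X_s,Y_s,Z_s)|\le C\,(1+|Z_s|^2)\,\mathbb{I}_{\{|Z_s|>n\}}.
\]

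Third, I would extract the rate. Cauchy--Schwarz in time separates the bounded-moment factor $\int_0^T(1+|Z_s|^2)^2\mathrm{d}s$ from the small-mass factor $\int_0^T\mathbb{I}_{\{|Z_s|>n\}}\mathrm{d}s$; after raising to the power $2pq$, a Cauchy--Schwarz in $\omega$ isolates $\{\mathbb{E}(\int_0^T\mathbb{I}_{\{|Z_s|>n\}}\mathrm{d}s)^{2pq}\}^{1/2}$, which by Jensen is dominated by $T^{pq}\sup_s\mathbb{P}(|Z_s|>n)^{1/2}$. Spending $\kappa/2$ moments of $\sup_t|Z_t|$ in Chebyshev gives $\sup_s\mathbb{P}(|Z_s|>n)\le Cn^{-\kappa/2}$, so this factor is $\le Cn^{-\kappa/4}$, whence $\mathbb{E}[(\int_0^T(1+|Z_s|^2)\mathbb{I}_{\{|Z_s|>n\}}\mathrm{d}s)^{2pq}]\le Cn^{-\kappa/4}$; all moments are finite by \eqref{1.2}. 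Taking the $1/q$ root demanded by Lemma \ref{thap} turns $n^{-\kappa/4}$ into the advertised $n^{-\kappa/(4q)}$, with constant absorbing $\Upsilon^{(1)}$, the bound in Lemma \ref{lemma 5.1}, and \eqref{ineq 6.7}. The main obstacle is precisely the uniform control \eqref{1.2} of $\sup_t|Z_t|$: it is what upgrades the indicator $\mathbb{I}_{\{|Z_s|>n\}}$ into an arbitrarily high negative power of $n$, and it rests on the representation $Z_t=\nabla_x Y_t(\nabla_x X_t)^{-1}$ together with the negative-moment bounds on the inverse first variation process from Remark \ref{kun}. Keeping $q$ uniform in $n$ (via $\Upsilon^{(2)}$) is the other delicate point, since a blow-up of $q$ would degrade the exponent.
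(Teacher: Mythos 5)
Your proof is correct, and it reaches the paper's rate $n^{-\kappa/(4q)}$ by the same overall strategy (stability estimate of Lemma \ref{thap} plus a truncation error supported on a rare event, killed by Chebyshev with high moments of the supremum of the control process), but the execution differs from the paper's in two genuine ways. First, you label the solutions so that the driver difference in Lemma \ref{thap} is evaluated along the \emph{exact} solution $(Y,Z)$, i.e.\ $g_n(s,X_s,Y_s,Z_s)-g(s,X_s,Y_s,Z_s)$, whereas the paper evaluates it along the truncated solution $(Y^n,Z^n)$, i.e.\ $g(s,X_s,Y_s^n,Z_s^n)-g_n(s,X_s,Y_s^n,Z_s^n)$. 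Your choice buys a real simplification: since $\|Y\|_{\mathcal{S}^{\infty}}\leq \Upsilon^{(1)}$, the $y$-truncation is exactly the identity for $n\geq \Upsilon^{(1)}$, so the term with the indicator $\mathbb{I}_{\{|Y_s^n|>n\}}$ that the paper must estimate separately (using the uniform bound on $Y^n$) never appears, and you only need the tail of $Z$; the price is that you must invoke \eqref{1.2} (sup-moments of $Z$ for the limit solution, from Lemma \ref{lempath1}) in place of the uniform bound \eqref{ineq 6.7} for the $Z^n$ — both rest on the same representation $Z_t=\nabla_x Y_t(\nabla_x X_t)^{-1}$ and the negative-moment bounds of Remark \ref{kun}, so nothing is lost. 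Second, you dispense entirely with the explicit Girsanov/Bayes scaffolding of the paper's proof (the linearising process $\zeta^n$, the measure $\mathbb{Q}^n$, the reverse-H\"older constant $\varPi$, and the back-and-forth conversion of $\mathbb{Q}^n$-moments of $Z^n$ into $\mathbb{P}$-moments): since Lemma \ref{thap} is stated directly under $\mathbb{P}$, with the change of measure already absorbed into its proof, your direct application is cleaner, and you correctly identify the one point where this could fail — the exponent $q$ must be chosen uniformly in $n$, which is guaranteed because $\max\{\sup_n\|Z^n*B\|_{BMO},\|Z*B\|_{BMO}\}\leq \Upsilon^{(2)}$. The only loose end is cosmetic: your pointwise simplification of the driver difference requires $n\geq \Upsilon^{(1)}$, while the theorem asserts the bound for all $n\in\mathbb{N}$; the finitely many remaining $n$ are absorbed into $C(p,\kappa)$ via the a priori bounds on $(Y^n,Z^n)$ and $(Y,Z)$, and it would be worth one sentence saying so.
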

\begin{proof} Let $\zeta^n$ be given by		
\[ \zeta_t^n:= \frac{g(t,X_t,Y_t^n,Z_t^n)-g(t,X_t,Y_t^n,Z_t)}{|Z_t^n- Z_t|^2} (Z_t^n- Z_t)1_{\{ Z_t^n- Z_t \neq 0\} }.\]
Then thanks to the assumptions of the theorem, $\zeta_t^n*B \in BMO(\mathbb{P})$.  In addition for some constant $r > 1$, independent of $n$ we have $\mathcal{E}(\zeta^n*B)^{-1} \in L^r(\mathbb{Q}^n)$.
Let $\varPi $ be define by $\varPi := \max\{ \|\mathcal{E}(\zeta^n*B) \|_{L^{r}(\mathbb{P})}, \|\mathcal{E}(\zeta^n*B)^{-1} \|_{L^{r}(\mathbb{Q}^n)} \}$. 
Then, from the Bayes' rule and  H\"{o}lder's inequality, we obtain for all $p> 1$ that 
\begin{align}
&\mathbb{E}\Big[ \sup_{t\in [0,T]}|Y_t^n -Y_t|^{2p} + \Big( \int_0^T  |Z_t^n -Z_t|^2 \mathrm{d}t\Big)^p  \Big]\notag\\
\leq &\varPi \Big[\mathbb{E}^{\mathbb{Q}^n}\Big( \sup_{t\in [0,T]}|Y_t^n -Y_t|^{2pq} + \Big( \int_0^T  |Z_t^n -Z_t|^2 \mathrm{d}t\Big)^{pq} \Big)\Big]^{\frac{1}{q}},
\end{align}
where $\mathrm{d}\mathbb{Q}^n := \mathcal{E}(\zeta^n*B)\mathrm{d}\mathbb{P}$.  Using Lemma \ref{thap}, we have
\begin{align*}
&\Big(\mathbb{E}^{\mathbb{Q}^n}\Big[ \sup_{t\in [0,T]}|Y_t^n -Y_t|^{2pq} + \Big( \int_0^T \! |Z_t^n -Z_t|^2 \mathrm{d}t\Big)^{pq} \Big]\Big)^{\frac{1}{q}} \\
\leq &C \Big(\mathbb{E}^{\mathbb{Q}^n}\Big[\int_0^T\!\! |g(s,X_s,Y_s^n,Z_s^n)-g(s,X_s,\tilde{\rho}(Y_s^n), {\rho}(Z_s^n))| \mathrm{d}s\Big]^{2pq} \Big)^{\frac{1}{q}}.\notag
\end{align*}
In addition for $K= \max(\Lambda_y,\Lambda_z)$
\begin{align*}
&|g(s,X_s,Y_s^n,Z_s^n)-g(s,X_s,\tilde{\rho}(Y_s^n), {\rho}(Z_s^n))| \\
\leq &K(1+2|Z_s^n|^{\alpha}+4\sup_{0\leq y\leq 2\Upsilon^{(1)}}\varphi(y)|Z_s^n|)\big(|\tilde{\rho}(Y_s^n)- Y_s^n| +|{\rho}(Z_s^n)-Z_s^n|\big).
\end{align*}

By using the above bound and applying  the H\"{o}lder's inequality, we deduce that
\begin{align*}
&\Big(\mathbb{E}^{\mathbb{Q}^n}\Big[\sup_{t\in [0,T]}|Y_t^n -Y_t|^{2pq} + \Big( \int_0^T  |Z_t^n -Z_t|^2 \mathrm{d}t\Big)^{pq} \Big]\Big)^{\frac{1}{q}} \notag\\	
&\leq C(q,p) \Big(\mathbb{E}^{\mathbb{Q}^n}\Big[\int_0^T \Big|K(1+2|Z_s^n|^{\alpha}+4\sup_{0\leq y\leq 2\Upsilon^{(1)}}\varphi(y)|Z_s^n|)\Big|^2\mathrm{d}s\Big]^{2pq}\Big)^{\frac{1}{2q}}\\
&\quad \times \Big(\mathbb{E}^{\mathbb{Q}^n}\Big[\int_0^T\Big(|\tilde{\rho}(Y_s^n)-Y_s^n| + |{\rho}(Z_s^n)-Z_s^n|\Big)^2 \mathrm{d}s\Big]^{2pq} \Big)^{\frac{1}{2q}}\notag
\end{align*}
Since $Z^n$ belongs  to $\mathcal{H}^{2p}$ and the function $\varphi$ is continuous, we deduce that the first term in the above inequality is uniformly bounded in $n$. It then remains to prove that the second term is also uniformly bounded. 
Since, $\mathbb{P}$ and $\mathbb{Q}^n$ are equivalent, using \eqref{ineq 6.7} and \eqref{ineq 6.8},  the H\"{o}lder and the Markov's inequalities give

\begin{align}
&\Big(\mathbb{E}^{\mathbb{Q}^n}\Big[\int_0^T\Big(|\tilde{\rho}(Y_s^n)-Y_s^n| + |{\rho}(Z_s^n)-Z_s^n|\Big)^2 \mathrm{d}s\Big]^{2pq} \Big)^{\frac{1}{2q}}\notag\\
\leq & C\Big(\mathbb{E}^{\mathbb{Q}^n}\Big[\int_0^T  ((\Upsilon^{(1)})^2 \mathbb{I}_{\{ |Y^n_s| > n\}} +|Z_s^n|^2\mathbb{I}_{\{ |Z^n_s| > n\}})  \mathrm{d}s\Big]^{2pq} \Big)^{\frac{1}{2q}}  \notag\\
\leq &C\Big( \int_0^T {\mathbb{Q}^n}{\{ |Y^n_s| > n\}}\mathrm{d}s \Big)^{\frac{1}{2q}} +  C\Big( \mathbb{E}^{\mathbb{Q}^n} \int_0^T |Z_s^n|^{8pq} \mathrm{d}s \Big)^{\frac{1}{4q}}\Big( \int_0^T {\mathbb{Q}^n} {\{ |Z^n_s| > n\}} \mathrm{d}s \Big)^{\frac{1}{4q}} \notag\\
\le& C(n)^{\frac{-\kappa}{2q}}\Big( \int_0^T \mathbb{E}^{\mathbb{Q}^n}\big[|Y_s^n|^{2\kappa}\big] \mathrm{d}s \Big)^{\frac{1}{2q}} + C (n)^{\frac{-\kappa}{4q}}\Big( \int_0^T \mathbb{E}^{\mathbb{Q}^n}|Z_s^n|^{8pq}\big)\mathrm{d}s \Big)^{\frac{1}{4q}}  \Big( \int_0^T \mathbb{E}^{\mathbb{Q}^n}\big[|Z_s^n|^{2\kappa}\big] \mathrm{d}s \Big)^{\frac{1}{4q}}\notag,
\end{align}
Using once more the uniform boundedness of $Y^n$, the Bayes' rule and the H\"{o}lder's inequality, we derive the existence of a constant $C$ that does not depend on $\kappa$ and $n$ such that:
\begin{align*}
&\Big(\mathbb{E}^{\mathbb{Q}^n}\Big[\int_0^T\Big(|\tilde{\rho}(Y_s^n)-Y_s| + |{\rho}(Z_s^n)-Z_s^n|\Big)^2 \mathrm{d}s\Big]^{2pq} \Big)^{\frac{1}{2q}}\notag\\	
&\leq C(n)^{\frac{-\kappa}{2q}} + C (n)^{\frac{-\kappa}{4q}} \Big(\int_0^T\mathbb{E}\Big[ \mathcal{E}(\Pi^n*B) |Z_s^n|^{2\kappa} \Big]\mathrm{d}s \Big)^{\frac{1}{4q}}\notag\\
&\leq  C(n)^{\frac{-\kappa}{2q}} +C \varPi (n)^{\frac{-\kappa}{4q}}\Big( \mathbb{E}\big[ \sup_{0\leq s \leq T}|Z_s^n|^{2\kappa q}\big] \Big)^{\frac{1}{4q^2}}.\notag
\end{align*}
This completes the proof.
\end{proof}

	\appendix
\section{Some Properties of BMO$\text{-Martingales}$}
The following result give some properties of $\text{BMO}$-martingales which will be  extensively used in this work. 
We refer the reader to \cite{Kazamaki} for more details on the subject.

\begin{lemm}\leavevmode
	\begin{itemize}\label{lem 2.1}
		\item[(P1)] Let $M$ be a BMO($\mathbb{P}$) martingale with quadratic variation $\langle M \rangle$. Let  $(\mathcal{E}(M)_t)_{0\leq t\leq T}$  be the process defined by
		$$\mathcal{E}(M)_t:= \exp \{ M_T -1/2 \langle M \rangle_t   \}.$$
		Then $\mathbb{E}[\mathcal{E}(M)_T]=1$ and the measure $\mathbb{Q}$ given by $\mathrm{d}\mathbb{Q} := \mathcal{E}(M)_T\mathrm{d}\mathbb{P}$ defines a probability measure.
		\item[(P2)] For a given  BMO($\mathbb{P}$) martingale $M,$ there exists $ r > 1$ such that $\mathcal{E}(M) \in L^{r}.$ Moreover, for any stopping time $\tau \in [0,T]$ it holds that
		\begin{align*}
			\mathbb{E}[\mathcal{E}(M)_T^{r} |\mathfrak{F}_{\tau}] \leq C(r) (\mathcal{E}(M)_{\tau})^{r},
		\end{align*} 
		\item[(P3)] If $\Vert M \Vert_{BMO} < 1,$ then for every stopping time $\tau \in [0,T]$
		\[ \mathbb{E}\left[ \exp\{ \langle M \rangle_T  - \langle M \rangle_{\tau} \} /\mathfrak{F}_{\tau} \right]  < \frac{1}{1- \Vert M \Vert_{BMO}^2} .\]
		\item[(P4)] 
		If $\int_0 Z_s\mathrm{d}B_s \in$  BMO($\mathbb{P}$) then for every $ p \geq 1$ it holds that $Z \in \mathcal{H}^{2p}(\mathbb{R}^d)$
		i.e	 \[ \mathbb{E}\Big[ \Big( \int_{0}^{T} |Z_s|^2 \mathrm{d}s    \Big)^p \Big] \leq p! \Big\Vert \int Z\mathrm{d}B \Big\Vert_{BMO}^{2p}. \]
		Moreover for any $p \geq 1$ and  $\epsilon \in (0,2)$
		\[ \mathbb{E}\Big[ \exp\Big( p\int_{0}^{T} |Z_s|^{\epsilon} \mathrm{d}s  \Big)\Big] \leq C,   \]
		where $C$ depends on $p,\epsilon$ and $\Vert \int Z\mathrm{d}B \Vert_{BMO}^{2}.$ 
	\end{itemize}
The following is taken from \cite{GeissYlinen}.
\begin{lemm}[Fefferman's inequality]\label{feffer}
	Let $(\mu_t)_{t\in [0,T]}$ and $(\nu_t)_{t\in [0,T]}$ be progressively measurable $\mathbb{R}\text{-valued}$ processes such that $\mathbb{E}\int_0^T|\nu_t|^2\mathrm{d}t < \infty$ and $p \in [1,\infty)$. Then, the following holds
	\begin{equation*}
		\left\|   \int_0^T |\mu_t \nu_t|\mathrm{d}t \right\|_p \leq c(p)\|\mu\|_{\mathcal{H}^p} \|\nu\|_{\mathcal{H}_{\text{BMO}(\mathbb{P})}}
	\end{equation*}
\end{lemm}
\end{lemm}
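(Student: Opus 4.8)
The plan is to reduce the inequality to an $L^p$-estimate for the increasing process $A_t := \int_0^t |\mu_s\nu_s|\diff s$ and to extract the two factors $\|\mu\|_{\mathcal{H}^p}$ and $\|\nu\|_{\mathcal{H}_{\text{BMO}}}$ through a Garsia/Lenglart-type domination argument rather than a crude application of Cauchy--Schwarz and H\"older. The reason a naive approach fails is instructive: the pathwise Cauchy--Schwarz bound $\int_0^T|\mu_s\nu_s|\diff s\le(\int_0^T|\mu_s|^2\diff s)^{1/2}(\int_0^T|\nu_s|^2\diff s)^{1/2}$ followed by H\"older in $\Omega$ would force the $\mu$-factor into a higher norm $\mathcal{H}^{pa}$ with $a>1$, overshooting the target $\mathcal{H}^p$. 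Keeping the $\mu$-factor at exactly exponent $p$ is precisely where the $\text{BMO}$ hypothesis on $\nu$ must enter, via its defining tail bound $\mathbb{E}[\int_\tau^T|\nu_s|^2\diff s\mid\mathfrak{F}_\tau]\le\|\nu\|_{\mathcal{H}_{\text{BMO}}}^2$, valid for every stopping time $\tau$.

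The core step is a conditional estimate for the \emph{tail} of $A$. Let $M_t:=\mathbb{E}[\int_0^T|\mu_s|^2\diff s\mid\mathfrak{F}_t]$ be the Doob martingale associated with $\int_0^T|\mu_s|^2\diff s$, and let $M^*_t:=\sup_{r\le t}M_r$. For any stopping time $\tau$, combining the pathwise Cauchy--Schwarz inequality on $[\tau,T]$ with two conditional Cauchy--Schwarz steps and the $\text{BMO}$ tail bound yields
\begin{align*}
\mathbb{E}\big[A_T-A_\tau\mid\mathfrak{F}_\tau\big]
&\le \mathbb{E}\Big[\Big(\int_\tau^T|\mu_s|^2\diff s\Big)^{1/2}\Big(\int_\tau^T|\nu_s|^2\diff s\Big)^{1/2}\,\Big|\,\mathfrak{F}_\tau\Big]\\
&\le \big(M_\tau\big)^{1/2}\Big(\mathbb{E}\Big[\int_\tau^T|\nu_s|^2\diff s\,\Big|\,\mathfrak{F}_\tau\Big]\Big)^{1/2}
\le \|\nu\|_{\mathcal{H}_{\text{BMO}}}\,\big(M_\tau\big)^{1/2},
\end{align*}
where I have used $\int_\tau^T|\mu_s|^2\diff s\le M_T$ before the second conditional Cauchy--Schwarz (and $\mathbb{E}[M_T\mid\mathfrak{F}_\tau]=M_\tau$). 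Since $M^*$ is increasing, $(M_\tau)^{1/2}\le(M^*_\tau)^{1/2}=\mathbb{E}[(M^*_\tau)^{1/2}\mid\mathfrak{F}_\tau]\le\mathbb{E}[(M^*_T)^{1/2}\mid\mathfrak{F}_\tau]$, so that $A$ is dominated by the $\mathfrak{F}_T$-variable $\Phi:=\|\nu\|_{\mathcal{H}_{\text{BMO}}}(M^*_T)^{1/2}$ in the sense $\mathbb{E}[A_T-A_\tau\mid\mathfrak{F}_\tau]\le\mathbb{E}[\Phi\mid\mathfrak{F}_\tau]$ for all $\tau$.

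To conclude I would invoke the Garsia--Neveu domination lemma: for an increasing process $A$ with $A_0=0$ dominated in the above conditional sense by $\Phi\ge0$, one has $\|A_T\|_p\le c(p)\|\Phi\|_p$ for every $p\ge1$. This gives $\|A_T\|_p\le c(p)\|\nu\|_{\mathcal{H}_{\text{BMO}}}\|(M^*_T)^{1/2}\|_p=c(p)\|\nu\|_{\mathcal{H}_{\text{BMO}}}\|M^*_T\|_{p/2}^{1/2}$, and Doob's maximal inequality together with $M_T=\int_0^T|\mu_s|^2\diff s$ yields $\|M^*_T\|_{p/2}^{1/2}\le c\,\|M_T\|_{p/2}^{1/2}=c\,\|\mu\|_{\mathcal{H}^p}$, which is exactly the claimed bound. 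The main obstacle is this last passage from the conditional tail estimate to the $L^p$-norm of $A_T$: the energy inequality (P4) in Lemma \ref{lem 2.1} only handles domination by a \emph{constant}, whereas here the dominating quantity $\Phi$ is genuinely random, so one must either establish the random-domination version of Garsia's lemma or argue by Lenglart's inequality; a further technical point is the regime $1\le p\le2$, where Doob's inequality at exponent $p/2\le1$ is unavailable and has to be circumvented by interpolation or a weak-type argument.
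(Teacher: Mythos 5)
Two preliminary remarks: the paper itself offers no proof of this lemma --- (P1)--(P4) are quoted from Kazamaki and the Fefferman inequality from \cite{GeissYlinen} --- and your proposal addresses only the Fefferman part, leaving (P1)--(P4) untouched. Within that part, your architecture (conditional tail estimate plus Garsia--Neveu domination) is indeed the standard route, and the first half is correct: the chain $\mathbb{E}[A_T-A_\tau\,|\,\mathfrak{F}_\tau]\le\Vert\nu\Vert_{\mathcal{H}_{\text{BMO}}}M_\tau^{1/2}\le\mathbb{E}[\Phi\,|\,\mathfrak{F}_\tau]$ with $\Phi=\Vert\nu\Vert_{\mathcal{H}_{\text{BMO}}}(M^*_T)^{1/2}$ is valid, as is the domination lemma $\Vert A_T\Vert_p\le p\,\Vert\Phi\Vert_p$ for $p\ge 1$. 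The genuine gap is the final passage $\Vert(M^*_T)^{1/2}\Vert_p\le c\,\Vert M_T\Vert_{p/2}^{1/2}$: Doob's $L^q$ maximal inequality requires $q=p/2>1$, so your argument only proves the lemma for $p>2$, whereas the statement asserts it for all $p\in[1,\infty)$ --- and $p=1$ (classical Fefferman) and $p=2$ are precisely the cases this paper needs (e.g.\ in the proof of Lemma \ref{thap2}). You flag the regime $1\le p\le 2$ yourself, but deferring it to ``interpolation or a weak-type argument'' does not close it: interpolation would require the endpoint $p=1$, which your chain cannot deliver (at $q=1$ Doob only gives an $L\log L$ bound, and the failure of the $L^q$ inequality for $q\le 1$ is genuine, not technical). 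Worse, for $p<2$ the hypothesis $\Vert\mu\Vert_{\mathcal{H}^p}<\infty$ does not make $\int_0^T|\mu_s|^2\diff s$ integrable, so the Doob martingale $M_t=\mathbb{E}\big[\int_0^T|\mu_s|^2\diff s\,\big|\,\mathfrak{F}_t\big]$ on which your whole construction rests need not even be well defined without a truncation you have not supplied.

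The repair keeps your Garsia--Neveu scheme but applies it to a different increasing process, avoiding the maximal function altogether. Set $R_t:=\int_0^t|\mu_s|^2\diff s$ and use the weighted Cauchy--Schwarz splitting $|\mu_t\nu_t|=(|\mu_t|R_t^{-1/4})(R_t^{1/4}|\nu_t|)$ together with $\int_0^T R_t^{-1/2}\diff R_t=2R_T^{1/2}$ to obtain the pathwise bound $A_T\le\sqrt{2}\,R_T^{1/4}\,C_T^{1/2}$, where $C_t:=\int_0^t R_s^{1/2}|\nu_s|^2\diff s$. Now dominate $C$ rather than $A$: writing $R_t^{1/2}=\int_0^t\diff R_s^{1/2}$, applying Fubini so that $C_T-C_\tau\le\int_0^T\big(\int_{s\vee\tau}^T|\nu_t|^2\diff t\big)\diff R_s^{1/2}$, and invoking the BMO bound at the stopping times $s\vee\tau$ (via optional projection), one gets $\mathbb{E}[C_T-C_\tau\,|\,\mathfrak{F}_\tau]\le c\,\Vert\nu\Vert_{\mathcal{H}_{\text{BMO}}}^2\,\mathbb{E}[R_T^{1/2}\,|\,\mathfrak{F}_\tau]$, whence Garsia--Neveu yields $\Vert C_T\Vert_p\le c(p)\,\Vert\nu\Vert_{\mathcal{H}_{\text{BMO}}}^2\Vert R_T^{1/2}\Vert_p$ for every $p\ge 1$. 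Combining with $\Vert XY\Vert_p\le\Vert X\Vert_{2p}\Vert Y\Vert_{2p}$ gives $\Vert A_T\Vert_p\le\sqrt{2}\,\Vert R_T^{1/2}\Vert_p^{1/2}\,\Vert C_T\Vert_p^{1/2}\le c'(p)\,\Vert\mu\Vert_{\mathcal{H}^p}\Vert\nu\Vert_{\mathcal{H}_{\text{BMO}}}$ for all $p\in[1,\infty)$, with no Doob inequality and no integrability of $R_T$ beyond $R_T^{1/2}\in L^p$, which is exactly the hypothesis; this is essentially the argument behind the statement the paper imports from \cite{GeissYlinen}.
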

\section{Proofs of  Lemma \ref{thap} and Lemma \ref{thap2}}\label{proofs}
\begin{proof}[Proof of Lemma \ref{thap}] The proof follows as in \cite[Lemma A.1]{ImkRevRich}. For the sake of completeness, we briefly reproduce it here. Set $\delta Y = Y^1 - Y^2,$ $\delta Z = Z^1 - Z^2$ and $\delta \xi = \xi^1 -\xi^2, \delta g = g^1(\cdot,Y^2,Z^2) - g^2(\cdot,Y^2,Z^2).$ We also define :
	\begin{align*}
		\Gamma_t:&= \frac{g^1(t,Y_t^1,Z_t^1) - g^1(t,Y_t^2,Z_t^1)}{Y^1_t - Y^2_t}{1}_{\{Y^1_t - Y^2_t\neq 0\}}, \quad e_t:= \exp\Big( 2\int_{0}^{t} |\Gamma_s| \mathrm{d}s \Big),\\
		\Pi_t:&= \frac{g^1(t,Y_t^2,Z_t^1) - g^1(t,Y_t^2,Z_t^2)}{|Z_t^1 - Z_t^2|^2}(Z^1_t - Z^2_t){1}_{\{|Z^1_t - Z^2_t|\neq 0\}}.
	\end{align*}
	
	Using It\^{o}'s formula, and the Girsanov transform, the dynamics of the continuous semimartingale $(e_t \delta Y_t^2)_{t\geq 0}$  is given by 
	\begin{align*}
		\mathrm{d}[e_t(\delta Y_t)^2] 
		= (-2e_t\delta Y_t \delta g_t + e_t |\delta Z_t|^2)\mathrm{d}t + 2e_t\delta Y_t\delta Z_t \mathrm{d}B_t^\mathbb{Q},
	\end{align*}
	where, $B_{\cdot}^{\mathbb{Q}} = B_{\cdot} - \int_{0}^{\cdot}\Pi_s\mathrm{d}s$ is a {Brownian motion} under the probability measure $\mathbb{Q}$ with density $\mathrm{d}\mathbb{Q}/\mathrm{d}\mathbb{P} = \mathcal{E}(\int_{0}^{\cdot}\Pi_{\cdot}\mathrm{d}B_{\cdot})$. 
	Observe that $(e_t)_{t\in [0,T]}$ is strictly increasing and bounded from below by $1$, and for any $0\leq t \leq s\leq T$ 
	\[ e_s (e_t)^{-1} = \exp\Big\{ 2 \int_{t}^{s} |\Gamma_r|\mathrm{d}r \Big\} \leq A_T:= \exp\Big\{  2\int_{0}^{T} K(1+ |Z_s^1|^{\alpha}) \mathrm{d}s \Big\}.   \]
	Moreover since $Z^1\in \mathcal{H}_{\text{BMO}}$, $(e_t)_{t \in [0,T]} \in \mathcal{S}^p$ for all $p\geq 1$. Then, we deduce that
	\begin{align}\label{eqA6}
		e_t(\delta Y_t)^2 + \int_t^T e_s|\delta Z_s|^2 \mathrm{d}s 
		\leq e_T \delta \xi^2 
		+ 2\int_t^T e_s \delta Y_s \delta g_s \mathrm{d}s -  2 \int_t^T e_s\delta Y_s \delta Z_s \mathrm{d}B_s^\mathbb{Q} .
	\end{align}
	Let us first provide an estimate of the norm of $\delta Y_t$ under the measure $\mathbb{Q}$. Dividing both sides of \eqref{eqA6} by $e_t$, taking the conditional expectation on both sides and using the basic inequality $ab\leq \frac{1}{\epsilon}a^2+\epsilon b^2$ for $\epsilon>0$, we obtain
	\begin{align}
		\delta Y_t^2 \leq&  \mathbb{E}^{\mathbb{Q}} \Big[  \frac{e_T}{e_t} \delta \xi^2 +  2\int_0^T \frac{e_s}{e_t} \delta Y_s |\delta g_s| \mathrm{d}s   \Big|\mathfrak{F}_t \Big]\notag\\
		\leq	& \mathbb{E}^{\mathbb{Q}} \Big[ \frac{1}{\epsilon}\sup_{0\leq t \leq T}|\delta Y_t|^2 +  A_T^2 \X  \Big|\mathfrak{F}_t    \Big]  ,\label{ok}
	\end{align}
	where $\X$ is defined by  $\X:= |\delta\xi|^2 + \epsilon \Big(\int_{0}^{T}|\delta g_s|\mathrm{d}s\Big)^2$. Notice that $\mathbb{E}^{\mathbb{Q}} \Big[ \frac{1}{\epsilon}\sup_{0\leq t \leq T}|\delta Y_t|^2  \Big|\mathfrak{F}_t    \Big]$ (respectively $\mathbb{E}[A_T^2\X|\mathfrak{F}_t]$) is a right continuous martingale on $[0,T]$ with terminal random value given by $ \frac{1}{\epsilon}\sup_{0\leq t \leq T}|\delta Y_t|^2$ (respectively $A_T^2\X$). Then by the Doob's martingale inequality we have that for all $p > 1$
	\begin{align}
		\mathbb{E}^{\mathbb{Q}}\Big[\sup_{t\in [0,T]}\mathbb{E}^{\mathbb{Q}} \Big[ \frac{1}{\epsilon}\sup_{0\leq t \leq T}|\delta Y_t|^2  \Big|\mathfrak{F}_t    \Big] \Big]^{p}\leq& \Big( \frac{p}{p-1} \Big)^{p}\mathbb{E}^{\mathbb{Q}} \Big[ \frac{1}{\epsilon^{p}}\sup_{0\leq t \leq T}|\delta Y_t|^{2p}   \Big] ,\label{eqdoob1}\\
		\mathbb{E}^{\mathbb{Q}}\Big[\sup_{t\in [0,T]}\mathbb{E}^{\mathbb{Q}}[A_T^2\X|\mathfrak{F}_t] \Big]^{p}\leq& \Big( \frac{p}{p-1} \Big)^{p}\mathbb{E}^{\mathbb{Q}}[A_T^{2p}\X^p] . \label{eqdoob2}
	\end{align}
	Taking successively the absolute value, the $p\text{-th}$ power, the supremum and the expectation on both sides of \eqref{ok}  and using \eqref{eqdoob1} and \eqref{eqdoob2}, we obtain 
	\begin{align*}
		\mathbb{E}^{\mathbb{Q}}[\sup_{t\in [0,T]} |\delta Y_t|^{2p} ] \leq& \mathbb{E}^{\mathbb{Q}}\Big[\sup_{t\in [0,T]} \Big( \mathbb{E}^{\mathbb{Q}} \Big[ \frac{1}{\epsilon}\sup_{0\leq t \leq T}|\delta Y_t|^2 +  A_T^2 \X  \Big|\mathfrak{F}_t    \Big]\Big)^p  \Big]\\
		\leq &C(p) \Big(\mathbb{E}^{\mathbb{Q}} \Big[ \frac{1}{\epsilon^{p}}\sup_{0\leq t \leq T}|\delta Y_t|^{2p}   \Big]+\mathbb{E}^{\mathbb{Q}}[A_T^{2p}\X^p]\Big).
	\end{align*}
	Choosing $\epsilon$ such that $\frac{C(p)}{\epsilon^p}<1$ and using the H\"{o}lder's inequality for $\nu\geq 1$, we have
	\begin{align*}
		\mathbb{E}^{\mathbb{Q}}[\sup_{t\in [0,T]} |\delta Y_t|^{2p} ] \leq&C(p) \mathbb{E}^{\mathbb{Q}} \Big[ A_T^{2p} \X^p  \Big]\\
		\leq &C(p) \mathbb{E}^{\mathbb{Q}}[A_T^{\frac{2p\nu}{\nu-1}}]^{\frac{\nu-1}{\nu}}\mathbb{E}^{\mathbb{Q}}\Big[ |\delta\xi|^{2p\nu} +  \Big(\int_{0}^{T}|\delta g_s|\mathrm{d}s\Big)^{2p\nu}  \Big]^{\frac{1}{\nu}}.
	\end{align*}
	The first term on the right side of the above bound is finite (see Lemma \ref{lem 2.1}).
	
	Since $\Pi *B$ is a $BMO(\mathbb{P})$ martingale it follows that $-\Pi *B^{\mathbb{Q}}$ and $\Pi *B^{\mathbb{Q}}$ are  $BMO(\mathbb{Q})$ martingales. Then from Lemma \ref{lem 2.1} there exist $r,r_1> 1$ such that $\mathcal{E}(\Pi *B) \in L^r$ and $\mathcal{E}(-\Pi *B^{\mathbb{Q}}) \in L^{r_1}$. In addition, since $\mathcal{E}(\Pi *B)^{-1}= \mathcal{E}(-\Pi *B^{\mathbb{Q}})$, we have $\mathrm{d}\mathbb{P} = \mathcal{E}(-\Pi *B^{\mathbb{Q}})\mathrm{d}\mathbb{Q}$. Let $r'$ and $r_1'$ be the H\"{o}lder conjugates of $r$ and $r_1$, respectively. Then using Girsanov's theorem, we have 
	\begin{align*}
		\mathbb{E}[\sup_{t\in [0,T]} |\delta Y_t|^{2p} ] &= 	\mathbb{E}^{\mathbb{Q}}[ \mathcal{E}(-\Pi *B^{\mathbb{Q}})_T\sup_{t\in [0,T]} |\delta Y_t|^{2p} ] \\
		&\leq \mathbb{E}^{\mathbb{Q}}[ \mathcal{E}(-\Pi *B^{\mathbb{Q}})_T^{r_1}]^{\frac{1}{r_1}} \mathbb{E}^{\mathbb{Q}}[\sup_{t\in [0,T]} |\delta Y_t|^{2pr_1'} ]^{\frac{1}{r_1'}}\\
		&\leq C \mathbb{E}^{\mathbb{Q}}\Big[ |\delta\xi|^{2pr_1'\nu} + \Big(\int_{0}^{T}|\delta g_s|\mathrm{d}s\Big)^{2pr_1'\nu}  \Big]^{\frac{1}{r_1'\nu}}\\
		&\leq C \mathbb{E}[\mathcal{E}(\Pi *B)^r]^{\frac{1}{r}} \mathbb{E}\Big[ |\delta\xi|^{2pr_1'\nu r'} + \Big(\int_{0}^{T}|\delta g_s|\mathrm{d}s\Big)^{2pr_1'\nu r'}  \Big]^{\frac{1}{r_1'\nu r'}}
	\end{align*}
	We obtain the desired estimate for $\delta Y$ by taking $q= r_1'\nu r'$. Let us now provide the bound for $\delta Z$. From \eqref{eqA6} taking the $p\text{-th}$ power, applying the BDG inequality one can obtain:
	\begin{align*}
		\mathbb{E}^{\mathbb{Q}}\left( \int_t^T |\delta Z_s|^2 \mathrm{d}s\right)^p  \leq  \mathbb{E}^{\mathbb{Q}} |e_T\delta\xi^2|^p + 2\mathbb{E}^{\mathbb{Q}} \Bigg( \int_t^T  e_s \delta Y_s \delta g_s \mathrm{d}s \Bigg)^p + c(p)	\mathbb{E}^{\mathbb{Q}} \Bigg( \int_t^T  |e_s \delta Y_s \delta Z_s|^2 \mathrm{d}s \Bigg)^{\frac{p}{2}}
	\end{align*}
	By applying H\"older's inequality and using the fact that $1 \leq A_T \leq A_T^2$ we deduce that
	\begin{align*}
		\mathbb{E}^{\mathbb{Q}}\left( \int_t^T |\delta Z_s|^2 \mathrm{d}s\right)^p  \leq C \mathbb{E}^{\mathbb{Q}} \left[ A_T^{2p} \left( |\delta\xi|^{2p} + \epsilon \Big(\int_{0}^{T}|\delta g_s|\mathrm{d}s\Big)^{2p} \right) \right]	
	\end{align*}

	Similar techniques can be used to provide the bound under the measure $\mathbb{P}$. This conclude the proof.
\end{proof}
\begin{proof}[Proof of Lemma \ref{thap2}]
	We keep almost the same notations as in the above proof at the exception of the process $(e_t)_{t \in [0,T]}$ defined as follows: $e_t:= \exp\Big( \int_{0}^{t} |\Gamma_s| \mathrm{d}s \Big)$.

	Similar arguments as in the proof above gives:
	\begin{align*}
		|\delta Y_t| = \mathbb{E}^{\mathbb{Q}}\Big[  A_T \Xi_t |\mathfrak{F}_t \Big].
	\end{align*}
	where the process $\Xi$ is defined by  $\Xi_t:= |\delta\xi| + \int_{t}^{T}|\delta g_s|\mathrm{d}s$. Using $p_0 \in (1,p)$ we obtain from the reverse H\"{o}lder inequality that
	
	\begin{align*}
		|\delta Y_t| &\leq C(p_0')\mathbb{E}\Big[ (A_T \Xi_t)^{p_0} |\mathfrak{F}_t \Big]^{\frac{1}{p_0}}
	\end{align*}
	
	By Doob's maximal and the Cauchy-Schwartz inequalities
	\begin{align*}
		\mathbb{E}\left[ \sup_{s\in [t,T]}|\delta Y_s|^p \right] &\leq C(p_0')\Big(\frac{p}{p-p_0} \Big)^{\frac{p}{p_0}}\mathbb{E}\Big[ (A_T \Xi_t)^{p} \Big]\\
		&\leq C(p_0')\Big(\frac{p}{p-p_0} \Big)^{\frac{p}{p_0}}\mathbb{E}\Big[ (A_T)^{2p} \Big]^{\frac{1}{2}} \mathbb{E}\Big[ \Xi_t^{2p} \Big]^{\frac{1}{2}}.
	\end{align*}
	Thus, 
	\begin{align}
		\Big\| \sup_{s\in [t,T]}|\delta Y_s|  \Big\|_p \leq C(p,p_0',p_0) \| \Xi_t \|_{2p},
	\end{align}
from which we obtain the first inequality of the Lemma. Set
	$\Delta g_s= g^1(s,Y_s^1,Z_s^1) - g^2(s,Y_s^2,Z_s^2) $ and using the assumptions of the Lemma, we have 
	\begin{align*}
		|\Delta g_s| \leq |\delta g_s| + \Lambda_y \theta_s|\delta Y_s| + \Lambda_z \varphi_s |\delta Z_s|,
	\end{align*}
	where $\theta_s = (1 +2|Z_s^1|^{\alpha}) $ and $\varphi_s = (1 +2f(|Y_s^1|)(|Z_s^1|+ |Z_s^2|)) $ belong to $ \mathcal{H}_{\text{BMO}}.$ By applying the Fefferman's (see Lemma \ref{feffer}) inequality, the Young and H\"older inequalities, we deduce the following
	
	\begin{align*}
		\int_t^T |\delta Y_s \Delta g_s|\mathrm{d}s &\leq \int_t^T |\delta Y_s \delta g_s|\mathrm{d}s + \Lambda_y \int_t^T |\delta Y_s|^2 |\theta_s|\mathrm{d}s + \Lambda_z \int_t^T |\delta Y_s| |\varphi_s| |\delta Z_s|\mathrm{d}s\\
		&\leq \Lambda^2 \sup_{s\in [t,T]}|\delta Y_s|^2 + \frac{1}{2} \left( \int_t^T |\delta g_s|\mathrm{d}s\right)^2  + \Lambda_z \int_t^T |\delta Y_s| |\varphi_s| |\delta Z_s|\mathrm{d}s,
	\end{align*}
	where $\Lambda = \Big( \frac{1}{2} + \Lambda_y \|\theta_s\|_{\mathcal{H}_{\text{BMO}}} \Big)$.

Next, set $S_t(Z)^2:= \int_t^T |\delta Z_s|^2\mathrm{d}s$, and successively apply the It\^{o}'s formula, the Fefferman's inequality( Lemma \ref{feffer}) and BDG's inequality  to get
	
	\begin{align*}
		&	\| S_t(Z) \|_p \\
		&\leq \left\| \left(  |\delta \xi|^2 + 2\int_t^T |\delta Y_s||\Delta g_s|\mathrm{d}s + 2\left| \int_t^T \delta Y_s \delta Z_s\mathrm{d}B_s \right|    \right)^{\frac{1}{2}}  \right\|_p\\
		&\leq \left\| \left(  |\delta \xi|^2 + 2\Lambda^2 \sup_{s\in [t,T]}|\delta Y_s|^2 + \left( \int_t^T |\delta g_s|\mathrm{d}s\right)^2  + 2\Lambda_z \int_t^T |\delta Y_s| |\varphi_s| |\delta Z_s|\mathrm{d}s + 2\left| \int_t^T \delta Y_s \delta Z_s\mathrm{d}B_s \right|    \right)^{\frac{1}{2}}  \right\|_p\\
		&\leq \| \Xi\|_p + \sqrt{2}\Lambda \Big\| \sup_{s\in [t,T]}|\delta Y_s| \Big\|_p + \left( \sqrt{2\Lambda_z} \|\varphi_s\|_{\mathcal{H}_{\text{BMO}}}^{\frac{1}{2}} + \sqrt{2} c(p) \right) \left\| \Big(\int_t^T |\delta Y_s \delta Z_s|^2 \mathrm{d}s \Big)^{\frac{1}{2}} \right\|_{\frac{p}{2}}^{\frac{1}{2}}\\
		&\leq \| \Xi\|_p + (\sqrt{2}\Lambda + \kappa \sqrt{\frac{\varepsilon}{2}}) \Big\| \sup_{s\in [t,T]}|\delta Y_s| \Big\|_p + \frac{\kappa}{\sqrt{2\varepsilon}} \| S_t(Z) \|_p,
	\end{align*}
	for any $\varepsilon >0,$ and $\kappa = \left( \sqrt{2\Lambda_z} \|\varphi_s\|_{\mathcal{H}_{\text{BMO}}}^{\frac{1}{2}} + \sqrt{2} c(p) \right)$. Choosing for example $\varepsilon=\kappa ^2$ leads to the second bound	of the Lemma. The proof is completed.
\end{proof}

	\bibliographystyle{plain}
	\bibliography{Biblio, bibliography, Biblio1}

\end{document}